\theoremstyle{plain}
\newtheorem{thm}[subsection]{Theorem}
\newtheorem{prp}[subsection]{Proposition}
\newtheorem{lem}[subsection]{Lemma}
\newtheorem{cor}[subsection]{Corollary}
\newtheorem{dfn}[subsection]{Definition}
\theoremstyle{remark}
\newtheorem{exm}[subsection]{Example}
\newtheorem{exms}[subsection]{Examples}
\newtheorem{rmk}[subsection]{Remark}
\def\HCC{\mathrm{CC}}
\def\HH{\mathrm{HH}}
\def\LL{\mathcal{L}}
\def\op{\mathrm{op}}
\def\CC{\mathcal{C}}
\def\DD{\mathcal{D}}
\def\uCC{\underline{\mathcal{C}}}
\def\Top{\mathrm{Top}}
\def\uTop{\underline{\Top}}
\def\OO{\mathcal{O}}
\def\Hom{\mathrm{Hom}}
\def\uHom{\underline{\mathrm{\Hom}}}
\def\Coend{\mathrm{Coend}}
\def\End{\mathrm{End}}
\def\EE{\mathcal{E}}
\def\uEE{\underline{\mathcal{E}}}
\def\al{\alpha}
\def\be{\beta}
\def\Sg{\Sigma}
\def\sg{\sigma}
\def\lrto{\longrightarrow}
\def\lra{\leftrightarrows}
\def\FF{\mathcal{F}}
\def\Cat{\mathbf{Cat}}
\def\Aa{\mathcal{A}}
\def\Bb{\mathcal{B}}
\def\Alg{\mathrm{Alg}}
\def\NN{\mathbb{N}}
\def\RR{\mathbb{R}}
\def\BB{\mathbb{B}}
\def\RB{\mathbb{RB}}
\def\RS{\mathbb{R}\Sg}
\def\Aut{\mathrm{Aut}}
\def\inc{\hookrightarrow}
\def\ZZ{\mathbb{Z}}
\def\Set{\mathrm{Set}}
\def\Ass{\mathrm{Ass}}
\def\Ch{\mathrm{Ch}}
\def\Fin{\mathcal{F}\mathit{in}}
\def\Mod{\mathrm{Mod}}
\def\XX{\mathcal{X}}
\def\PP{\mathcal{P}}
\def\tam{\EuScript Tam}
\def\cgr{\EuScript K}
\def\colim{\mathrm{colim}}
\def\hocolim{\mathrm{hocolim}}
\begin{document}

\title{The lattice path operad and Hochschild cochains}
\author{M. A. Batanin}
\address{Macquarie University, NSW 2109, Australia}
\email{mbatanin@ics.mq.edu.au}

\author{C. Berger}
\address{Universit\'e de Nice, Lab. J.-A. Dieudonn\'e, Parc Valrose, 06108 Nice Cedex 2, France}
\email{cberger@math.unice.fr}

\subjclass[2000]{Primary 18D50, 16E40; Secondary 55P48}

\date{2 February 2009}

\keywords{Lattice path operad, $E_n$-operad, iterated loop space, cyclic operad, Deligne conjecture, Hochschild cochains, Frobenius monoid}

\begin{abstract}We introduce two coloured operads in sets -- the lattice path operad and a cyclic extension of it -- closely related to iterated loop spaces and to universal operations on cochains. As main application we present a formal construction of an $E_2$-action (resp. framed $E_2$-action) on the Hochschild cochain complex of an associative (resp. symmetric Frobenius) algebra.\end{abstract}

\maketitle

\section*{Introduction}

The algebraic structure of iterated loop spaces is best captured by the action of the operad of little $n$-cubes or any other equivalent \emph{$E_n$-operad} \cite{BV,Ma}. Over the last fifteen years, \emph{Deligne's conjecture} motivated the research of cellular models of the little disks operad, suitable to act on the Hochschild cochain complex of an associative algebra. According to the \emph{cyclic} Deligne conjecture \cite{TT} this action extends to a cellular model of the \emph{framed} little disks operad if the algebra is a symmetric Frobenius algebra. Both conjectures have been given several proofs, see for instance \cite{BF,Ka1,Ka2,KS1,KS2,MS1,Ta,TZ,V,V2}. The resulting (framed) $E_2$-action on Hochschild cochains is a chain-level realization of the Gerstenhaber (resp. Batalin-Vilkovisky) algebra structure of the Hochschild cohomology of an associative (resp. symmetric Frobenius) algebra, cf. \cite{Ger,Get,Me}.

Our purpose here is to give a parallel and conceptually simple proof of both conjectures.  We extend McClure-Smith's cosimplicial techniques \cite{MS3} making use of crossed simplicial groups, especially of Connes' cyclic category \cite{Co,FT,FL,Kr}. The novelty of our approach is the construction of two filtered coloured operads in sets -- the \emph{lattice path operad} $\LL$ and the \emph{cyclic lattice path operad} $\LL^{cyc}$ -- which encode the combinatorial structure of iterated loop spaces in a ``context-independent'' way. In particular, the combinatorial structure inducing a (framed) $E_2$-action is completely transparent in our setting, and the (cyclic) Deligne conjecture follows from the combination of the following four facts:\begin{itemize}\item[(i)]the second filtration stage $\LL_2$ (resp. $\LL_2^{cyc}$) acts on multiplicative non-symmetric (resp. multiplicative cyclic) operads;\item[(ii)] the endomorphism operad of an algebra (resp. symmetric Frobenius algebra) is a multiplicative non-symmetric (resp. multiplicative cyclic) operad;\item[(iii)]the Hochschild cochain complex is obtained from the endomorphism operad by conormalization;\item[(iv)]condensation of $\LL_2$ (resp. $\LL_2^{cyc}$) with respect to the standard cosimplicial (resp. cocyclic) object in chain complexes yields an $E_2$-chain operad (resp. framed $E_2$-chain operad).\end{itemize}Property (iv) holds in any closed symmetric monoidal category equipped with a ``good'' cosimplicial (resp. cocyclic) object and a compatible Quillen model structure. In the category of topological spaces our proof recovers McClure-Smith's \cite{MS3} (resp. Salvatore's \cite{Sa}) topological version of the (cyclic) Deligne conjecture.\vspace{1ex}

Here is an outline of the article:\vspace{1ex}

 In Section \ref{cond} we review some basic constructions involving operads. We introduce the term \emph{condensation} for the combination of two in the literature existing constructions: a convolution product for coloured operads due to Day-Street \cite{DS}, followed by a generalized coendomorphism operad for diagrams due to McClure-Smith \cite{MS3}. Condensation takes a pair $(\OO,\delta)$ consisting of a coloured operad $\OO$ and a diagram $\delta$ on the unary part of $\OO$, to a single-coloured operad $\Coend_\OO(\delta)$ in the target category of $\delta$. Condensation of coloured operads is the main technical tool of this article and seems to be of independent interest.

In Section \ref{latticepath} we introduce the lattice path operad $\LL$. The unary part of $\LL$ coincides with the simplex category $\Delta$. The lattice path operad comes equipped with an operadic filtration by complexity. We characterize the categories of algebras over the $k$-th filtration stage $\LL_k$ for $k=0,1,2.$ These are respectively the categories of cosimplicial objects, of cosimplicial $\square$-monoids, and of multiplicative non-symmetric operads. The simplicial $n$-sphere $\Delta[n]/\partial\Delta[n]$ is an $\LL_n$-coalgebra in the category of finite pointed sets. This recovers a result of Sinha \cite{Si} concerning the structure of the simplicial $2$-sphere. Moreover, we obtain a canonical $E_\infty$-action on the normalized cochains of a simplicial set (cf. \cite{BF,MS2}) as well as a canonical $E_n$-action on Pirashvili's $n$-th order higher Hochschild cochains (cf. \cite{Gi,Pi}).

In Section \ref{complexity} we study the homotopy type of the condensation of $\LL_n.$ We recall the definition of the complete graph operad $\cgr$ and its filtration \cite{Be2}, and show that the complexity index induces a filtration-preserving morphism of operads from the lattice path operad $\LL$ to the complete graph operad $\cgr$. From this we deduce our main theorem that $\delta$-condensation of $\LL_n$ in a monoidal model category yields an $E_n$-operad provided that the cosimplicial object $\delta$ interacts well with the model structure and with $\LL$ (we call $\LL$ strongly $\delta$-reductive in this case). For $n=2$ this implies Deligne's conjecture according to the aforementioned scheme.

In Section \ref{cyclic} we introduce the cyclic lattice path operad $\LL^{cyc}$. The unary part of $\LL^{cyc}$ coincides with the cyclic category of Connes \cite{Co}. We prove that condensation of $\LL_2^{cyc}$ with respect to the standard cocyclic object in topological spaces (resp. chain complexes) yields a framed $E_2$-operad in topological spaces (resp. chain complexes). Again, according to the aforementioned scheme, this readily implies the cyclic Deligne conjecture.\vspace{1ex}

Let us comment on some perspectives of future work:\vspace{1ex}

There is a close relationship between symmetric Frobenius algebras and Calabi-Yau categories. According to Costello \cite{Cos}, the latter give rise to open Topological Conformal Field Theories, and a framed $E_2$-action on the Hochschild cochain complex is just the genus $0$ part of such a TCFT-action. This suggests connections between the cyclic lattice path operad, the moduli space of Riemann spheres, and Chas-Sullivan's theory of string topology operations. In a forthcoming series of papers \cite{BBM}, the relationship of the (cyclic) lattice path operad with different versions of the surjection operad \cite{BF,MS2}, with the operad of natural operations on Hochschild cochains \cite{BMl}, and with Sullivan's chord diagrams \cite{TZ} will be investigated.

Our method applies to other monoidal model categories than topological spaces or chain complexes, such as the category  $\Cat$ of small categories with the Joyal-Tierney model structure or the category of $2$-categories with Lack's model structure. The case $\Cat$ is already interesting. Condensation of $\LL_2$ with respect to the standard cosimplicial object (consisting in degree $n$ of a contractible groupoid on $n+1$ objects) yields a categorical $E_2$-operad. We get in particular an $E_2$-action (i.e. a braided monoidal structure) on the codescent object of the Hochschild complex of a monoidal category. The latter coincides with Joyal-Street's \cite{JS} \emph{center} of the monoidal category. We expect a similar result in the case of $2$-categories which would considerably simplify the approach of Baez-Neuchl \cite{BN}.

An intriguing problem is the determination of higher operadic structures hidden in the lattice path operad. In fact, we discovered the lattice path operad when trying to understand and generalize Tamarkin's \cite{Ta2} construction of a contractible $2$-operad acting on the derived $2$-category of $dg$-categories. To be more precise, by \cite{Ba3}, the $n$-th filtration stage of the complete graph operad contains an internal $n$-operad $a_{n}$. The comma categories $\{\cgr_n/a_T\}_{T\in Ord(n)}$ form therefore a categorical $n$-operad  ${\cgr_n/a_n}$ which comes equipped with an $n$-operadic functor ${\cgr_n/a_n}\rightarrow Des_n(\cgr_n)$, where $Des$ denotes the desymmetrisation functor of \cite{Ba3}. Define a coloured categorical $n$-operad $\tam_n$ by the following pullback\begin{diagram}[small]\tam_n&\rTo&\cgr_n/a_n\\\dTo&&\dTo\\ Des_n(\LL_n)&\rTo&Des_n(\cgr_n).\end{diagram}Condensation of $\tam_n$ produces then a contractible $n$-operad. For $n=2$, this is precisely Tamarkin's contractible $2$-operad which acts on the derived $2$-category of $dg$-categories. In combination with \cite{Ba2} this action gives a particularly clear proof of Deligne's conjecture. We hope to be able to extend this line of proof to higher-dimensional versions of the Deligne conjecture as formulated by Kontsevich.\vspace{3ex}

\emph{Acknowledgements:} The authors are grateful to Denis-Charles Cisinski, Ezra Getzler, Andr\'e Joyal, Dmitry Kaledin, Joachim Kock, Martin Markl, Joan Mill\`es, Paolo Salvatore, Ross Street, Dmitry Tamarkin and Mark Weber for many illuminating discussions. The helpful comments of the anonymous referee have been much appreciated. The first author gratefully acknowledges financial support of the Australian Research Council and Scott Russel Johnson Foundation. The second author gratefully acknowledges financial support of the French CNRS and the French Agence Nationale de Recherche (grant OBTH). Both authors thank the CRM of Barcelona for the stimulating atmosphere offered by the special research program 2007/2008 on Derived Categories and Higher Homotopy Structures.

\section{\label{cond} Condensation of coloured operads}

\subsection{Coendomorphism operads}Let $\EE=(\EE,\otimes_\EE,I_\EE,\tau_\EE)$ be a \emph{closed symmetric monoidal category} with tensor $\otimes_\EE$, unit $I_\EE$, symmetry $\tau_\EE$, and internal hom $\uEE(-,-)$. The subscript $\EE$ will often be omitted. For an arbitrary $\EE$-category $\CC$, let $\CC^{\otimes k}$ be the $\EE$-category with objects the $k$-tuples of objects of $\CC$, and hom-objects $$\uCC^{\otimes k}((X_i)_{1\leq i\leq k},(Y_i)_{1\leq i\leq k})=\bigotimes_{i=1}^k\uCC(X_i,Y_i).$$The iterated tensor product $$\xi_k^\otimes:\EE^{\otimes k}\to\EE:(X_1,\dots,X_k)\mapsto X_1\otimes\cdots\otimes X_k$$ is an \emph{$\EE$-functor}, i.e. there are canonical morphisms $$\uEE(X_1,Y_1)\otimes\cdots\otimes\uEE(X_k,Y_k)\to\uEE(X_1\otimes\cdots\otimes X_k,Y_1\otimes\cdots\otimes Y_k).$$This permits the definition of a \emph{coendomorphism operad} $\Coend(X)$ of $X$ by$$\Coend(X)(k)=\uEE(X,X^{\otimes k}),\quad k\geq 0,$$where the symmetric group $\Sg_k$ on $k$ letters acts on $\Coend(X)(k)$ by permutation of the tensor factors, and the operad substitution maps$$\Coend(X)(k)\otimes\Coend(X)(i_1)\otimes\cdots\otimes\Coend(X)(i_k)\to\Coend(X)(i_1+\cdots+i_k)$$are given by tensoring the last $k$ factors and precomposing with the first factor.

\begin{prp}\label{coend1}Let $X,Y$ be objects of $\EE$ and assume that $Y$ is a commutative monoid in $\EE$. Then $\uEE(X,Y)$ has a canonical $\Coend(X)$-algebra structure.\end{prp}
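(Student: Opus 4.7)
The plan is to exhibit an explicit structure map and then verify the three operad-algebra axioms (unit, equivariance, associativity), with commutativity of $Y$ entering precisely at the equivariance step.

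First I would construct the structure map. Because $Y$ is a commutative monoid, for every $k\geq 0$ there is an iterated multiplication $\mu_k:Y^{\otimes k}\to Y$ which is associative and $\Sg_k$-equivariant (with the trivial action on $Y$), with $\mu_0:I\to Y$ the unit and $\mu_1=\mathrm{id}_Y$. Applying the $\EE$-functor $\xi_k^\otimes$ yields
$$\uEE(X,Y)^{\otimes k}\to\uEE(X^{\otimes k},Y^{\otimes k}),$$
which composed with $(\mu_k)_*$ gives $\uEE(X,Y)^{\otimes k}\to\uEE(X^{\otimes k},Y)$. Tensoring with $\Coend(X)(k)=\uEE(X,X^{\otimes k})$ and using internal composition in $\EE$ yields the candidate algebra structure map
$$\Coend(X)(k)\otimes\uEE(X,Y)^{\otimes k}\to\uEE(X,X^{\otimes k})\otimes\uEE(X^{\otimes k},Y)\to\uEE(X,Y).$$
On elements this is $(\phi,f_1,\dots,f_k)\mapsto\mu_k\circ(f_1\otimes\cdots\otimes f_k)\circ\phi$.

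Second I would verify the three axioms. The \emph{unit} is immediate: the unit of $\Coend(X)$ is $\mathrm{id}_X\in\uEE(X,X)=\Coend(X)(1)$, and $\mu_1=\mathrm{id}_Y$, so the structure map at $k=1$ sends $(\mathrm{id}_X,f)$ to $f$. The \emph{$\Sg_k$-equivariance} is where commutativity is essential: permuting the inputs $(f_1,\dots,f_k)$ by $\sigma\in\Sg_k$ and simultaneously permuting the tensor factors of $\phi\in\uEE(X,X^{\otimes k})$ (which defines the $\Sg_k$-action on $\Coend(X)(k)$) yields the same composite because $\mu_k$ is $\Sg_k$-invariant, i.e.\ $\mu_k\circ\tau_\sigma=\mu_k$ on $Y^{\otimes k}$. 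The \emph{associativity} compatible with operadic substitution reduces to two compatibilities: associativity of composition in $\EE$ (for the $\phi$-part) and the identity $\mu_{i_1+\cdots+i_k}=\mu_k\circ(\mu_{i_1}\otimes\cdots\otimes\mu_{i_k})$ (for the $Y$-part), which is exactly coherence of the commutative monoid structure. Naturality of the $\EE$-functor structure on $\xi^\otimes$ ensures these two compatibilities assemble correctly when comparing the action of an operadic substitution $\phi\circ(\phi_1,\dots,\phi_k)$ with the iterated action.

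The main obstacle is bookkeeping in the associativity diagram: one must check that, under the canonical isomorphism permuting tensor factors of $\uEE(X,Y)^{\otimes(i_1+\cdots+i_k)}$, the map built from the operadic substitution in $\Coend(X)$ agrees with the map built from iterated algebra actions. This is a diagram chase using the coherence of $\EE$ as a closed symmetric monoidal category together with the associativity of $\mu$; no new ideas are needed beyond the $\EE$-functoriality of $\xi^\otimes$ already recorded in the setup, so the verification is routine.
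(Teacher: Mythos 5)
Your proposal is correct and follows essentially the same route as the paper: the paper's proof writes down exactly the composite you describe (factored as $\Coend(X)(k)\otimes\uEE(X,Y)^{\otimes k}\to\Coend(X)(k)\otimes\uEE(X^{\otimes k},Y^{\otimes k})\to\uEE(X,Y^{\otimes k})\to\uEE(X,Y)$, which agrees elementwise with your $\mu_k\circ(f_1\otimes\cdots\otimes f_k)\circ\phi$) and leaves the axiom verification implicit, whereas you spell it out, correctly locating commutativity in the equivariance step and generalized associativity $\mu_{i_1+\cdots+i_k}=\mu_k\circ(\mu_{i_1}\otimes\cdots\otimes\mu_{i_k})$ in the substitution compatibility.
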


\begin{proof}The action is given by $$\Coend(X)(k)\otimes\uEE(X,Y)^{\otimes k}\to\Coend(X)(k)\otimes\uEE(X^{\otimes k},Y^{\otimes k})\to\uEE(X,Y^{\otimes k})\to\uEE(X,Y)$$ where the first map is induced by an iterated tensor, the second map by composition and the third map by the commutative monoid structure of $Y$.\end{proof}

\subsection{Functor-operads}For any $\EE$-functor $\xi:\CC^{\otimes k}\to\CC$ and permutation $\sg\in\Sg_k$, we shall write $\xi^\sg$ for the $\EE$-functor $$\xi^\sg(X_1,\dots,X_k)=\xi(X_{\sg^{-1}(1)},\dots,X_{\sg^{-1}(k)}).$$ In particular, $\xi^{\sg\tau}=(\xi^\sg)^\tau$. An $\EE$-functor $\xi:\CC^{\otimes k}\to\CC$ is called \emph{twisted-symmetric} if $\xi$ comes equipped with $\EE$-natural transformations $\phi_\sg:\xi\to\xi^\sg,\,\sg\in\Sg_k,$ such that $\phi_{\sg\tau}=(\phi_\sg)^\tau\phi_\tau$ for $\sg,\tau\in\Sg_k$, and such that $\phi_e$ is the identity transformation where $e$ denotes the neutral element of $\Sg_k$.

\begin{dfn}[\cite{MS3}]\label{functor-operad}A \emph{functor-operad} $\xi$ on an $\EE$-category $\CC$ consists of a sequence of twisted-symmetric $\EE$-functors $\xi_k:\CC^{\otimes k}\to\CC,\,k\geq 0,$ together with $\EE$-natural transformations $$\mu_{i_1,\dots,i_k}:\xi_k\circ(\xi_{i_1}\otimes\cdots\otimes\xi_{i_k})\to\xi_{i_1+\cdots+i_k},\quad i_1,\dots,i_k\geq 0,$$such that\begin{enumerate}\item[(i)]$\xi_1$ is the identity functor and $\xi_k\circ(\xi_1\otimes\cdots\otimes\xi_1)\overset{\mu_{1,\dots,1}}{=}\xi_k\overset{\mu_k}{=}\xi_1\circ\xi_k$;\item[(ii)] the $\mu_{i_1,\dots,i_k}$ are associative;\item[(iii)] all diagrams of the following form commute:\begin{diagram}[small]\xi_k\circ(\xi_{i_1}\otimes\cdots\otimes\xi_{i_k})&\rTo^{\mu_{i_1,\dots,i_k}}&\xi_{i_1+\cdots+i_k}\\\dTo^{\phi_\sg\circ(\phi_{\sg_1}\otimes\cdots\otimes\phi_{\sg_k})}&&\dTo_{\phi_{\sg(\sg_1,\dots,\sg_k)}}\\\xi^\sg_k\circ(\xi^{\sg_1}_{i_1}\otimes\cdots\otimes\xi^{\sg_k}_{i_k})&\rTo^{\mu_{i_1,\dots,i_k}}&\xi^{\sg(\sg_1,\dots,\sg_k)}_{i_1+\cdots+i_k},\end{diagram}where $\sg(\sg_1,\dots,\sg_k)$ is the value of $(\sg;\sg_1,\dots,\sg_k)\in\Sg_k\times\Sg_{i_1}\times\cdots\times\Sg_{i_k}$ under the \emph{permutation operad}, cf. (\ref{string}).\end{enumerate}\vspace{1ex}%
A \emph{$\,\xi$-algebra} is an object $X$ of $\,\CC$ equipped with a sequence of morphisms $$\al_k:\xi_k(X,\dots,X)\to X,\quad k\geq 0,$$ such that\begin{enumerate}\item[(i)]$\al_1=1_X$;\item[(ii)]$\al_k\circ\phi_\sg^{X,\dots,X}=\al_k$ for all $\sg\in\Sg_k$;\item[(iii)] for all $i_1,\dots,i_k\geq 0$ the following diagram commutes in $\CC$:\begin{diagram}[small]\xi_k(\xi_{i_1}(X,\dots,X),\dots,\xi_{i_k}(X,\dots,X))&\rTo^{\mu_{i_1,\dots,i_k}}&\xi_{i_1+\cdots+i_k}(X,\dots,X)\\\dTo^{\xi_k(\al_{i_1},\dots,\al_{i_k})}&&\dTo_{\al_{i_1+\cdots+i_k}}\\\xi_k(X,\dots,X)&\rTo^{\al_k}&X.\end{diagram}
\end{enumerate}
\end{dfn}
\vspace{1ex}

\begin{prp}[\cite{MS3}]\label{coend2}Let $X,Y$ be objects of an $\EE$-category $\CC$ with functor-operad $\xi$, and let $Y$ be a $\xi$-algebra. Then $\uCC(X,Y)$ is a $\Coend_\xi(X)$-algebra where the coendomorphism operad is given by $\Coend_\xi(X)(k)=\uCC(X,\xi_k(X,\dots,X)),\,k\geq 0$.\end{prp}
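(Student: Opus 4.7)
The plan is to build $\Coend_\xi(X)$ as an operad in $\EE$ modeled on the construction of \ref{coend1}, and then to define an action on $\uCC(X,Y)$ using $\EE$-functoriality of the $\xi_k$, enriched composition in $\CC$, and the algebra maps $\al_k\colon\xi_k(Y,\dots,Y)\to Y$. For the operad structure: the unit is provided by axiom (i) ($\xi_1=\mathrm{id}_\CC$); the $\Sg_k$-action on $\Coend_\xi(X)(k)$ is post-composition with the automorphism $\phi_\sg^{X,\dots,X}$ of $\xi_k(X,\dots,X)$ (which is an automorphism thanks to $\phi_{\sg\tau}=\phi_\sg^\tau\phi_\tau$ and $\phi_e=\mathrm{id}$); and operadic substitution is obtained by applying the $\EE$-functor structure of $\xi_k$ to the last $k$ factors, composing in $\CC$, and post-composing with $\mu_{i_1,\dots,i_k}$ evaluated at $(X,\dots,X)$. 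The operad axioms then reduce to axiom (ii) (associativity of $\mu$) and axiom (iii) (equivariance of $\mu$) of the functor-operad, together with $\EE$-naturality of $\phi_\sg$ and $\mu$.

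The algebra action is the composite
$$\Coend_\xi(X)(k)\otimes\uCC(X,Y)^{\otimes k}\lrto\uCC(X,\xi_k(Y,\dots,Y))\lrto\uCC(X,Y),$$
where the first map applies the $\EE$-functor structure of $\xi_k$ to the last $k$ tensor factors and then composes in $\CC$, while the second is post-composition with $\al_k$. Unitality follows from axiom (i), $\al_1=1_Y$, and the unit law for enriched composition.

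Equivariance reduces to the $\EE$-naturality square of $\phi_\sg$ applied to a tuple $(f_1,\dots,f_k)\colon(X,\dots,X)\to(Y,\dots,Y)$, which identifies $\phi_\sg^{Y,\dots,Y}\circ\xi_k(f_1,\dots,f_k)$ with $\xi_k(f_{\sg^{-1}(1)},\dots,f_{\sg^{-1}(k)})\circ\phi_\sg^{X,\dots,X}$. Post-composing with $\al_k$ and invoking the $\xi$-algebra axiom $\al_k\circ\phi_\sg^{Y,\dots,Y}=\al_k$ then gives precisely the required equality between permutation of the $f_i$ and post-composition by $\phi_\sg^{X,\dots,X}$ on the $\Coend_\xi(X)(k)$-factor. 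Associativity of the action compares both nested composites via the common arrow $\al_{i_1+\cdots+i_k}\circ\mu_{i_1,\dots,i_k}^{Y,\dots,Y}$: axiom (iii) of a $\xi$-algebra rewrites this as $\al_k\circ\xi_k(\al_{i_1},\dots,\al_{i_k})$, while axiom (ii) for the functor-operad $\xi$ matches the nested operadic substitution in $\Coend_\xi(X)$.

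The main obstacle I anticipate is bookkeeping rather than anything conceptual. Three $\EE$-naturality phenomena interact---the $\EE$-functoriality of $\xi_k$, the $\EE$-naturality of $\phi_\sg$, and the $\EE$-naturality of $\mu_{i_1,\dots,i_k}$---and the verification must be organized so that every intermediate commutative square is an instance of one of these, of the associativity or unit law of enriched composition in $\CC$, or of one of the $\xi$-algebra axioms (i)-(iii). Once the diagram is assembled accordingly, the proposition follows.
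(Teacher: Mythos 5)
Your proposal is correct and follows essentially the same route as the paper: your operadic substitution and your action map are precisely the paper's generalized substitution maps $\mu_{i_1,\dots,i_k}^{X,X,X}$ and $\mu_{1,\dots,1}^{X,X,Y}$ (the latter followed by $\uCC(X,\al_k)$), and the verifications reduce to the same functor-operad and $\xi$-algebra axioms. The only difference is organizational: the paper packages the operad structure, the left-module structure of $\uCC(X,\xi_k(Y))$, and the action into one three-object substitution map, whereas you treat the operad structure and the action separately.
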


\begin{proof}We shall write $\xi_k(X)$ for $\xi_k(X,\dots,X)$. Since $\xi_k$ is an $\EE$-functor, the $\EE$-natural tranformation $\mu_{i_1,\dots,i_k}$ induces for objects $X,Y,Z$ of $\CC$ a substitution map $$\mu_{i_1,\dots,i_k}^{X,Y,Z}:\uCC(X,\xi_k(Y))\otimes\uCC(Y,\xi_{i_1}(Z))\otimes\cdots\otimes\uCC(Y,\xi_{i_k}(Z))\to\uCC(X,\xi_{i_1+\cdots+i_k}(Z)).$$In virtue of the defining properties of the functor-operad $\xi$, these substitution maps satisfy the unit, associativity and equivariance properties of an operad substitution map in $\EE$; in particular, the symmetric sequence $\Coend_\xi(X)(k),\,k\geq 0,$ is indeed an operad in $\EE$, and the symmetric sequence $\uCC(X,\xi_k(Y)),\, k\geq 0,$ is a left module over $\Coend_\xi(X)$. It follows then from the defining properties of the $\xi$-algebra $Y$ that\begin{diagram}[small]\Coend_\xi(X)(k)\otimes\uCC(X,Y)^{\otimes k}&\rTo^{\mu_{1,\dots,1}^{X,X,Y}}&\uCC(X,\xi_k(Y))&\rTo^{\uCC(X,\al_k)}&\uCC(X,Y)\end{diagram}defines the asserted operad action.\end{proof}

\begin{rmk}\label{other}Any closed symmetric monoidal category $\EE$ is an $\EE$-category, and the assignment $\xi^\otimes_k(X_1,\dots,X_k)=X_1\otimes\cdots\otimes X_k$ extends to a functor-operad $\xi^\otimes$. Commutative monoids in $\EE$ are then precisely $\xi^\otimes$-algebras; therefore, Proposition \ref{coend1} is a special case of Proposition \ref{coend2}. A functor-operad on $\CC$ is a particular instance of an \emph{internal operad} inside the categorical endomorphism operad $\underline{\mathrm{End}}_\EE(\CC)$; the general concept of an internal operad inside a categorical operad has been introduced by the first-named author in \cite{Ba3}; another closely related concept has been studied by Day-Street \cite{DS} under the name \emph{symmetric lax monoidal structure}. It is possible to dualize Proposition \ref{coend2} in order to generalize the obvious dual of Proposition \ref{coend1}. This however requires to replace functor-operads by functor-cooperads (resp. symmetric lax monoidal structures by symmetric colax monoidal structures).\end{rmk}

\subsection{Coloured operads} Let $N$ be a set of ``colours''. Recall that an $N$-coloured operad $\OO$ in $\EE$ consists of objects in $\EE$ $$\OO(n_1,\dots,n_k;n),\quad (n_1,\dots,n_k;n)\in N^{k+1},\,k\geq 0,$$together with units $I_\EE\to\OO(n,n)$ and substitution maps$$\OO(m_1,\dots,m_l;n_i)\otimes\OO(n_1,\dots,n_k;n)\overset{\circ_i}{\to}\OO(n_1,\dots,n_{i-1},m_1,\dots,m_l,n_{i+1},\dots,n_k;n)$$which fullfill natural unit, associativity and equivariance axioms. The colours $n_1,\dots,n_k$ are called \emph{input colours}, while the colour $n$ is called \emph{output colour}.

If $N=\{*\}$, we recover the classical concept of an operad by setting $$\OO(k)=\OO(\overbrace{*,\dots,*}^k;*),\quad k\geq 0.$$ If $N$ is not a set, but a proper class, the term \emph{multicategory} is more appropriate than that of a coloured operad; in other words, a coloured operad is precisely a \emph{small} multicategory. This smallness condition is essential for our purpose. The use of ``colours'' in this context goes back to Boardman-Vogt's seminal book \cite{BV}.

The \emph{underlying category} $\OO_u$ of a coloured operad $\OO$ has the colours as objects and the \emph{unary operations} as morphisms, i.e. $$\OO_u(m,n)=\OO(m;n).$$If $\OO$ is a coloured operad in $\EE$, then $\OO_u$ is an $\EE$-category. Since the unary operations act contravariantly on the inputs and covariantly on the output, any coloured operad $\OO$ in $\EE$ can be considered as a sequence of functors$$\OO(\overbrace{-,\dots,-}^k;-):\overbrace{\OO_u^\op\otimes\cdots\otimes\OO_u^\op}^k\otimes\OO_u\to\EE,\quad k\geq 0.$$
The category of $\EE$-functors $\OO_u\to\EE$ and $\EE$-natural transformations is the underlying category of an $\EE$-category which we shall denote by $\EE^{\OO_u}$. According to Day-Street \cite{DS}, each coloured operad $\OO$ in $\EE$ induces a sequence of $\EE$-functors $$\xi(\OO)_k:\overbrace{\EE^{\OO_u}\otimes \cdots \otimes \EE^{\OO_u}}^k  \lrto \EE^{\OO_u},\quad k\geq 0,$$ by the familiar coend formulas$$\xi(\OO)_k(X_1,\ldots,X_k)(n)= \OO(-,\dots,-;n)\otimes_{\OO_u\otimes\cdots\otimes\OO_u}X_1(-)\otimes\cdots\otimes X_k(-).$$

\begin{prp}[\cite{DS}]\label{convolution}The sequence $\xi(\OO)_k,\,k\geq 0,$ extends to a functor-operad on the diagram category $\EE^{\OO_u}$ in such a way that the categories of $\OO$-algebras and of $\xi(\OO)$-algebras are canonically isomorphic.\end{prp}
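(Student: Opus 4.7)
The plan is to produce the functor-operad structure on $\xi(\OO)$ by transporting, step by step, the structure already present on the coloured operad $\OO$, using the universal property of the coends that define $\xi(\OO)_k$. Throughout, I will exploit the fact that ordinary coends commute with tensoring (because $\EE$ is closed) and with each other (Fubini), which is what makes the whole construction formal.

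First I would construct the twisted-symmetric structure on each $\xi(\OO)_k\colon(\EE^{\OO_u})^{\otimes k}\to\EE^{\OO_u}$. The symmetric group $\Sg_k$ acts on the $\EE$-functor $\OO(-,\dots,-;n)\colon(\OO_u^{\op})^{\otimes k}\to\EE$ by permutation of the input colours (this is part of the data of a coloured operad). Feeding this action into the coend formula yields, for each $\sg\in\Sg_k$, an $\EE$-natural isomorphism $\phi_\sg\colon\xi(\OO)_k\to\xi(\OO)_k^\sg$, and the operadic equivariance axiom for $\OO$ immediately gives $\phi_{\sg\tau}=(\phi_\sg)^\tau\phi_\tau$ and $\phi_e=1$.

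Next I would build the substitution natural transformations $\mu_{i_1,\dots,i_k}$. Using Fubini and the fact that tensor commutes with coends in each variable, I rewrite
\begin{align*}
\xi(\OO)_k\bigl(\xi(\OO)_{i_1}(\vec X_1),\dots,\xi(\OO)_{i_k}(\vec X_k)\bigr)(n)
\end{align*}
as a single iterated coend of an expression whose integrand is
\begin{align*}
\OO(n_1,\dots,n_k;n)\otimes\OO(\vec m_1;n_1)\otimes\cdots\otimes\OO(\vec m_k;n_k)\otimes X_{1,1}(m_{1,1})\otimes\cdots\otimes X_{k,i_k}(m_{k,i_k}).
\end{align*}
The operadic substitution maps of $\OO$ provide an $\EE$-natural transformation from the first $k+1$ tensor factors to $\OO(\vec m_1,\dots,\vec m_k;n)$, and coending over the $n_j$ then produces the map to $\xi(\OO)_{i_1+\cdots+i_k}(\vec X_1,\dots,\vec X_k)(n)$. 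Unit, associativity and equivariance of $\mu_{i_1,\dots,i_k}$ reduce, under the universality of coends, to the corresponding axioms for the coloured operad $\OO$; in particular the equivariance square in Definition \ref{functor-operad}(iii) is exactly the image, under the coend, of the equivariance square for $\OO$ with respect to the permutation operad. This establishes the functor-operad structure.

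For the equivalence of algebras, I would unwind both definitions. An $\OO$-algebra is, by definition, an $\EE$-functor $A\colon\OO_u\to\EE$ together with $\EE$-natural maps
\begin{align*}
\OO(n_1,\dots,n_k;n)\otimes A(n_1)\otimes\cdots\otimes A(n_k)\to A(n),
\end{align*}
dinatural in the $n_i$ and natural in $n$, satisfying unit, associativity and equivariance. By the universal property of the coend, such maps are in bijection with morphisms $\xi(\OO)_k(A,\dots,A)\to A$ in $\EE^{\OO_u}$; the three axioms for an $\OO$-algebra translate, via the same universal property used in the construction of $\mu_{i_1,\dots,i_k}$ and $\phi_\sg$, into the three axioms for a $\xi(\OO)$-algebra. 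The correspondence is clearly functorial in $A$, giving the asserted isomorphism of categories.

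The main obstacle is bookkeeping: verifying that the coend manipulations used to define $\mu_{i_1,\dots,i_k}$ really are compatible with the $\Sg_k$-actions, so that the equivariance hexagons of Definition \ref{functor-operad}(iii) commute. This reduces, however, to the equivariance axiom of the coloured operad $\OO$ together with functoriality of the coend in its variables, so no non-formal input is needed beyond the existence and the expected exactness properties of coends in the closed symmetric monoidal category $\EE$.
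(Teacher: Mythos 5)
Your proposal is correct and follows essentially the same route as the paper's (much terser) proof: transport the $\Sg_k$-actions and substitution maps of $\OO$ through the coend formulas, and use the universal property of coends to identify $\OO$-algebra structure maps with $\xi(\OO)$-algebra structure maps. The only point you gloss over is axiom (i) of Definition \ref{functor-operad} — that $\xi(\OO)_1$ be the identity functor — which holds only via the canonical co-Yoneda isomorphism $\int^m\OO_u(m,n)\otimes X(m)\cong X(n)$; the paper disposes of exactly this point by citing the Yoneda lemma.
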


\begin{proof}The twisted symmetry of $\xi(\OO)_k$ follows from the $\Sg_k$-actions on the $k$-ary operations of $\OO$. The unit conditions for $\xi(\OO)$ are a consequence of the Yoneda lemma. Existence, associativity and equivariance of the substitution maps of $\xi(\OO)$ follow from those of $\OO$.

An $\OO$-algebra $X$ is a family $X(n),\,n\in N,$ of objects of $\EE$ together with unital, associative and equivariant action maps$$\OO(n_1,\dots,n_k;n)\otimes X(n_1)\otimes\cdots\otimes X(n_k)\lrto X(n),\quad (n_1,\dots,n_k;n)\in N^{k+1},\,k\geq 0.$$In particular, $X$ extends to an $\EE$-functor $\OO_u\to\EE$. The universal property of coends implies that the action maps above correspond bijectively to a sequence of maps$$\xi(\OO)_k(X,\dots,X)\lrto X,\quad k\geq 0,$$ endowing $X$ with the structure of a $\xi(\OO)$-algebra.\end{proof}

\subsection{Condensation and totalization}\label{condensation}Assembling Propositions \ref{convolution} and \ref{coend2} into one construction takes a coloured operad to a single-coloured operad $$\OO\mapsto\Coend_{\xi(\OO)}(\delta)$$ for every choice of $\EE$-functor $\delta:\OO_u\to\EE$. We call this composite construction \emph{$\delta$-condensation}, since -- intuitively speaking -- a diagram $\delta$ on the unary part of a coloured operad allows one to ``condense'' the given set of colours into a single colour. For notational ease we shall write $\Coend_\OO(\delta)$ instead of $\Coend_{\xi(\OO)}(\delta)$. By Proposition \ref{coend2} there is a companion \emph{$\delta$-totalization} functor $$\uHom_{\OO_u}(\delta,-):\Alg_\OO\lrto\Alg_{\Coend_\OO(\delta)}$$which takes an $\OO$-algebra $A$ to a $\Coend_{\OO}(\delta)$-algebra $\uHom_{\OO_u}(\delta,A)$. The bifunctor $\uHom_{\OO_u}$ denotes the $\EE$-valued hom of the $\EE$-category $\EE^{\OO_u}$.\vspace{1ex}

\begin{exm}Consider the case where $\OO$ is already a single-coloured operad, say in $k$-modules for a unital commutative ring $k$. The underlying category $\OO_u$ of unary operations is then a one-object $k$-linear category with $\OO_u(*,*)=\OO(1)$. Therefore, a diagram $\delta$ on $\OO_u$ is nothing but an $\OO(1)$-module, and $\delta$-condensation takes the given operad $\OO$ in $k$-modules to an operad $\Coend_\OO(\delta)$ in $\OO(1)$-modules. The unary part of the condensed operad is by definition $\uHom_{\OO(1)}(\delta,\delta)$. In the special case $\delta=\OO(1)$, the condensed operad in $\OO(1)$-modules has as object of unary operations the unit $\OO(1)$ of the category of $\OO(1)$-modules. In other words, even in this simple case, $\delta$-condensation is an interesting construction, which allows one to get rid of the unary operations through an extension of the base ring.\end{exm}

\section{\label{latticepath}The lattice path operad}

Recall that the \emph{$n$-th ordinal} $[n]$ is defined to be the category freely generated by the linear graph
$l_n=(0\rightarrow 1 \rightarrow \ldots \rightarrow  n)$ of length $n$. We define the \emph{tensor product of ordinals} $[m]\otimes[n]$ to be the category freely generated by the $(m,n)$-grid $l_m\otimes l_n$. The latter has as vertices the pairs $(i,j)$ for $0\leq i\leq m$ and $0\leq j\leq n$, and as only edges those $(i,j)\to(i',j')$ for which $(i',j')=(i+1,j)$ or $(i',j')=(i,j+1)$. This tensor product of ordinals extends (by Day convolution \cite{Da}) to a closed symmetric monoidal structure on the category $\Cat$ of small categories; often, the resulting tensor product of categories is called the \emph{funny} tensor product of categories in order to distinguish it from the cartesian product of categories.

We shall also consider the category $\Cat_{*,*}$ consisting of \emph{bipointed} small categories and functors preserving the two distinguished objects. For categories $\Aa,\Bb$, bipointed respectively by $(a_0,a_1)$ and $(b_0,b_1)$, the tensor product $\Aa\otimes\Bb$ is again bipointed by $((a_0,b_0),(a_1,b_1))$. We consider the ordinals $[n]$ as bipointed by $(0,n)$.

\begin{dfn}The \emph{lattice path operad} $\LL$ is the $\NN$-coloured operad in sets with $$\LL(n_1,\ldots, n_k; n)=\Cat_{*,*}([n+1],[n_1 +1]\otimes\cdots\otimes[n_k +1]),$$the operad substitution maps being induced by tensor and composition in $\Cat_{*,*}$.\end{dfn}

\subsection{Lattice paths as integer-strings}\label{string}A lattice path $x\in\LL(n_1,\dots,n_k;n)$ is a functor $[n+1]\to[n_1+1]\otimes\cdots\otimes[n_k+1]$ which takes $0$ to $(0,\dots,0)$ and $n+1$ to $(n_1+1,\dots,n_k+1)$, and which consists of $n+1$ morphisms $x(0)\to x(1)\to\cdots\to x(n)\to x(n+1)$. A morphism in $[n_1+1]\otimes\cdots\otimes[n_k+1]$ corresponds to a finite edge-path in the $(n_1+1,\dots,n_k+1)$-grid $l_{n_1+1}\otimes\cdots\otimes l_{n_k+1}$. Each edge in $l_{n_1+1}\otimes\cdots\otimes l_{n_k+1}$ is determined by its initial vertex together with the choice of a coordinate-axis, i.e. a direction among $k$ possible directions.

In conclusion, a lattice path $x\in\LL(n_1,\dots,n_k;n)$ determines, and is determined by, an \emph{integer-string} $(a_i)_{1\leq i\leq n_1+\cdots+n_k+k}$ which contains $n_1+1$ times the integer $1$, $n_2+1$ times the integer $2$, \dots, $n_k+1$ times the integer $k$, and which is \emph{subdivided} into $n+1$ substrings according to the $n+1$ morphisms $x(i-1)\to x(i)$. Empty substrings represent identity morphisms. Such a partition of an integer-string into $n+1$ substrings will be represented by an \emph{insertion of $\,n$ vertical bars} such that each substring is delimited either by two consecutive bars or, if the substring is initial (resp. terminal), by the leftmost (resp. rightmost) bar. For instance, the subdivided integer-string $1|12|21$ corresponds to the following lattice path $x\in\LL(2,1;2)$:

\begin{diagram}[small](0,2)&\hLine&(1,2)&\hLine&\cdot&\rTo^1&x(3)\\\vLine&&\vLine&&\uTo^2&&\vLine\\(0,1)&\hLine&(1,1)&\hLine&x(2)&\hLine&(3,1)\\\vLine&&\vLine&&\uTo^2&&\vLine\\x(0)&\rTo^1&x(1)&\rTo^1&\cdot&\hLine&(3,0)\end{diagram}

Under this identification, the substitution maps of the lattice path operad are given by \emph{renumbering} and \emph{string substitution}. For instance, we have $$1||12|3|2\circ_23|12=1'||1'2|5'|2\circ_{2}4'|2'3'=1'||1'4'|5'|2'3',$$ where we have separated the renumbering $(13123)\mapsto(1'5'2'3'4')$ from the substitution $(4'|2'3')\mapsto(2|2).$ In particular, the endomorphism operad of colour $0$ inside $\LL$ coincides with the so-called \emph{permutation operad}:$$\Sg_k=\LL(\overbrace{0,\dots,0}^k;0),\quad k\geq 0.$$ The reader should be aware of the fact that the definition of the permutation operad depends on the side from which $\Sg_k$ acts on itself. In this article we adopt left actions, i.e. a permutation $\sg\in\Sg_k$ is identified with the integer-string $\sg(1)\cdots\sg(k)$.

\subsection{The category $\Delta$}The full subcategory of $\Cat$ spanned by the ordinals $[n],\,n\geq 0,$ is called the \emph{simplex category} and denoted $\Delta$. As usual, a covariant functor $\Delta\to\EE$ is called a \emph{cosimplicial object} in $\EE$, and a contravariant functor $\Delta^\op\to\EE$ is called a \emph{simplicial object} in $\EE$. The representable presheaf $\Delta(-,[n])$ will be denoted $\Delta[n]$. Its boundary $\partial\Delta[n]$ consists of those ``simplices'' $[m]\to[n]$ which factor through a non-identity monomorphism in $\Delta$.

Adding an initial object to $\Delta$ (the ``empty'' ordinal $[-1]$) defines the category $\Delta_+$. This augmented simplex category is a \emph{monoidal category} with respect to join (juxtaposition) $[m]*[n]=[m+n+1]$. Actually, in Mac Lane's \cite{ML} terminology, $\Delta_+$ is the PRO for associative monoids, i.e. for any monoidal category $(\EE,\otimes,I_\EE)$, monoids in $\EE$ correspond bijectively to strong monoidal functors $\Delta_+\to\EE$.

\subsection{Joyal-duality}\label{duality}There is a contravariant bijection $$\Cat_{*,*}([n+1],[m+1])\cong\Cat([m],[n])$$ compatible with composition. Indeed, according to (\ref{string}), endpoint-preserving functors $\phi:[n+1]\to[m+1]$ are represented by constant integer-strings of length $m+1$, subdivided into $n+1$ substrings. Such an integer-string determines, and is determined by, a map of ordinals $\psi:[m]\to[n]$. More precisely, $\phi$ and $\psi$ determine each other by the formulas: $\psi(i)+1=\min\{j\,|\,\phi(j)>i\}$ and $\phi(j)-1=\max\{i\,|\,\psi(i)<j\}$. This duality is often referred to as \emph{Joyal-duality} \cite{Jo}.

\begin{lem}\label{Joyal}The underlying category of the lattice path operad is $\,\Delta$.\end{lem}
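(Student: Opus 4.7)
The plan is to unpack the definitions and apply Joyal-duality as recalled in subsection \ref{duality}. The underlying category $\LL_u$ has the colour set $\NN$ as its object class, with morphism sets
$$\LL_u(m,n) = \LL(m;n) = \Cat_{*,*}([n+1],[m+1]).$$
Joyal-duality provides a natural bijection $\Cat_{*,*}([n+1],[m+1]) \cong \Cat([m],[n]) = \Delta([m],[n])$, so on morphism-sets we already have the required identification $\LL_u(m,n) \cong \Delta([m],[n])$. What remains is to verify that this identification is functorial: it should carry identities to identities and be compatible with composition.

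For units, the identity on colour $n$ in $\LL_u$ is the identity functor $\mathrm{id}_{[n+1]} \in \Cat_{*,*}([n+1],[n+1])$, which under Joyal-duality corresponds to $\mathrm{id}_{[n]} \in \Delta([n],[n])$ (either by direct inspection of the formulas $\psi(i)+1 = \min\{j \mid \phi(j) > i\}$ in \ref{duality}, or because Joyal-duality was stated as a bijection of categories and so preserves identities).

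For composition, the subtlety is a double reversal that produces the correct covariant composition in $\Delta$. Given $f \in \LL_u(m,n)$ and $g \in \LL_u(n,p)$, operadic composition in $\LL$ is induced by composition in $\Cat_{*,*}$, producing the map $[p+1] \xrightarrow{g} [n+1] \xrightarrow{f} [m+1]$, i.e.\ $f \circ g \in \Cat_{*,*}([p+1],[m+1])$. On the Joyal-dual side, writing $\bar f : [m] \to [n]$ and $\bar g : [n] \to [p]$ for the corresponding maps in $\Delta$, the contravariance of the bijection in \ref{duality} sends $f \circ g$ to $\bar g \circ \bar f : [m] \to [p]$. Thus the composite $g \circ f$ in $\LL_u$ corresponds to $\bar g \circ \bar f$ in $\Delta$, which is exactly the covariant composition we want.

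Nothing more is required: the identification on morphism-sets, together with preservation of identities and this composition check, gives the asserted isomorphism of categories $\LL_u \cong \Delta$. The only step that needs genuine attention is the composition verification — making sure that the reversal coming from the definition of $\LL_u(m,n) = \Cat_{*,*}([n+1],[m+1])$ (where source and target are swapped relative to the colour direction) cancels precisely with the contravariance of Joyal-duality. Beyond that, everything is unpacking definitions.
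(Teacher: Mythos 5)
Your proof is correct and is essentially the paper's own argument: the paper proves this lemma by the one-line chain of identifications $\LL_u(m,n)=\Cat_{*,*}([n+1],[m+1])=\Cat([m],[n])=\Delta([m],[n])$, relying on the statement in (\ref{duality}) that Joyal-duality is contravariant and compatible with composition. Your additional verification that the contravariance of Joyal-duality cancels against the direction reversal built into $\LL_u(m,n)=\Cat_{*,*}([n+1],[m+1])$ is exactly the implicit content the paper leaves to the reader.
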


\begin{proof}$\LL_u(m,n)=\Cat_{*,*}([n+1],[m+1])=\Cat([m],[n])=\Delta([m],[n]).$\end{proof}

\subsection{The category $\Delta\Sigma$}\label{noncomm}There is a symmetric monoidal version $\Delta\Sigma_+$ of $\Delta_+$; in Mac Lane's \cite{ML} terminology, $\Delta\Sigma_+$ is the PROP for associative monoids, i.e. for any symmetric monoidal category $(\EE,\otimes,I_\EE,\tau_\EE)$, monoids in $\EE$ correspond bijectively to strong symmetric monoidal functors $\Delta\Sigma_+\to\EE$. The category $\Delta\Sigma$ has been described at several places. We follow Feigin-Tsygan \cite{FT}, Krasauskas \cite{Kr} and Fiedorowicz-Loday \cite{FL} who consider $\Delta\Sg$ in the context of \emph{crossed simplicial groups}.

By definition, a morphism $\phi\in\Delta\Sg([m],[n])$ consists of a map of finite sets $\phi:\{0,\dots,m\}\to\{0,\dots,n\}$ which comes equipped with an $(n+1)$-tuple of \emph{total orderings}, one for each fiber $\phi^{-1}(j),\,0\leq j\leq n,$ see for instance \cite[A10]{FT}. The composition law is defined in an obvious manner. The simplex category $\Delta$ embeds in $\Delta\Sg$ as the subcategory of order-preserving maps endowed with the natural orderings of the fibers. Again, we add an initial object $[-1]$ in order to get a symmetric monoidal category $(\Delta\Sigma_+,*,[-1],\tau)$. The symmetry $\tau$ is defined by the obvious switch map. There are canonical monomorphisms $\phi_i:[n_i]\to[n_1]*\cdots *[n_k],\,i=1,\dots,k$. For any $x:[n_1]*\cdots *[n_k]\to[n]$ in $\Delta\Sg_+$, we write $x_i:[n_i]\to[n]$ for the restriction of $x$ along $\phi_i$. Observe that $x$ is \emph{not} determined by its components $(x_1,\dots,x_k)$, i.e. the join is \emph{not} a coproduct for $\Delta\Sigma_+$.

\begin{prp}\label{duallattice}The lattice path operad has the following dual description:$$\LL(n_1,\dots,n_k;n)\cong\{x\in\Delta\Sigma_+([n_1]*\cdots*[n_k],[n])\,|\,x_i\in\Delta([n_i],[n]),\,i=1,\dots,k\},$$where the operad substitution maps are given by join and composition in $\Delta\Sigma_+$.\end{prp}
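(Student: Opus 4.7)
The plan is to build on the integer-string model of lattice paths from (\ref{string}) and translate it into the $\Delta\Sigma_+$-language by reading off both a set map and fiber orderings from each string. Recall that an element of $\LL(n_1,\dots,n_k;n)$ is an integer-string of length $n_1+\cdots+n_k+k$ containing $n_i+1$ copies of the letter $i$, subdivided into $n+1$ substrings by $n$ vertical bars.

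First I would define a map $\Psi\colon\LL(n_1,\dots,n_k;n)\to\Delta\Sigma_+([n_1]*\cdots*[n_k],[n])$ by identifying the element $e\in[n_i]$ with the $(e+1)$-st occurrence of the letter $i$ read from left to right, setting $\Psi(x)(e)=\ell$ if this occurrence lies in the $\ell$-th substring, and equipping each fiber $\Psi(x)^{-1}(\ell)$ with the total order in which its elements appear inside the $\ell$-th substring. The restriction $\Psi(x)_i\colon[n_i]\to[n]$ is then order-preserving, since the $(e+1)$-st occurrence of $i$ precedes the $(e+2)$-nd, and its induced fiber orderings agree with the natural ones inherited from $[n_i]\hookrightarrow[n_1]*\cdots*[n_k]$, so $\Psi(x)_i$ lies in $\Delta$. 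Conversely, given $y$ in the right-hand side with $y_i\in\Delta$, I would reconstruct an integer-string by filling each substring $\ell$ with the block-labels of the elements of $y^{-1}(\ell)$, taken in their fiber order; the hypothesis $y_i\in\Delta$ ensures that the $n_i+1$ occurrences of letter $i$ are enumerated consistently with the natural order on $[n_i]$, so this procedure yields an inverse $\Psi^{-1}$.

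For operadic compatibility, the claim is that under $\Psi$ the substitution $\circ_i$ in $\LL$ corresponds to the formula
\[
x\circ_i y \;=\; x\circ\bigl(\mathrm{id}_{[n_1]}*\cdots*\mathrm{id}_{[n_{i-1}]}*y*\mathrm{id}_{[n_{i+1}]}*\cdots*\mathrm{id}_{[n_k]}\bigr),
\]
where join provides the enlarged domain and composition is taken in $\Delta\Sigma_+$. I would verify that this matches the string-substitution rule illustrated in (\ref{string}): the join implements the renumbering of letters, while composition on fibers inserts the $p$-th substring of $y$ (namely $y^{-1}(p-1)$ read in fiber order) into the outer substring $x_i(p-1)$ that contains the $p$-th occurrence of the letter $i$ in $x$. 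The main technical nuisance is the bookkeeping of fiber orderings under composition in $\Delta\Sigma_+$, but this reduces to the observation that left-to-right order in the substituted string coincides with the composed fiber order. Together with Lemma \ref{Joyal} for the unary part, this establishes the asserted isomorphism of $\NN$-coloured operads.
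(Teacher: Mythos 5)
Your proposal is correct and takes essentially the same route as the paper: the paper decomposes a lattice path into its $k$ projected paths plus the total ordering of the individual steps (i.e.\ precisely the integer-string representation) and matches this, via Joyal duality applied componentwise, with the decomposition of a $\Delta\Sigma_+$-morphism into its components plus fiber orderings --- which is exactly what your map $\Psi$ does letter by letter. The only difference is explicitness: you spell out the inverse and sketch the operadic compatibility via the join-and-compose formula $x\circ_i y=x\circ(\mathrm{id}*\cdots*y*\cdots*\mathrm{id})$, a verification the paper leaves to the reader.
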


\begin{proof}A morphism $\hat{x}:[n_1]*\cdots*[n_k]\to[n]$ in $\Delta\Sg_+$ such that $\hat{x}_i\in\Delta([n_i],[n])$ can be considered as a $k$-tuple of simplices $(\hat{x}_1,\dots,\hat{x}_k)\in\Delta([n_1],[n])\times\cdots\times\Delta([n_k],[n])$ together with a total ordering of the vertex-set $[n_1]*\cdots*[n_k]$ which is compatible (under $\hat{x}$) with the natural ordering of the vertex-set $[n]$ of $\Delta[n]$.

On the other hand, there are canonical projection functors $$p_i:[n_1+1]\otimes\cdots\otimes[n_k+1]\to[n_i+1],\quad 1\leq i\leq k,$$and a lattice path $x:[n+1]\to[n_1+1]\otimes\cdots\otimes[n_k+1]$ can be considered as a $k$-tuple of projected lattice paths $$(p_1\circ x,\dots,p_k\circ x)\in\Cat_{*,*}([n+1],[n_1+1])\times\cdots\times\Cat_{*,*}([n+1],[n_k+1])$$together with a total ordering of the $(n_1+1)+\cdots+(n_k+1)$ individual steps, cf. the integer-string representation (\ref{string}) of $x$.

Therefore, Joyal-duality (\ref{duality}) establishes a canonical one-to-one correspondence between the two. We leave it to the reader to check that this correspondence respects the operad structures.\end{proof}

\begin{rmk}\label{pullback}The dual description of the lattice path operad can be interpreted as a pullback in the category of coloured operads. Observe first that any PROP $(\PP,\oplus,0,\tau)$ induces a coloured operad $\EE_\PP$ whose colours are the objects of $\PP$ different from $0$, and whose operations are given by $\EE_\PP(n_1,\dots,n_k;n)=\PP(n_1\oplus\cdots\oplus n_k,n)$. Note that the unit of $\PP$ is not a colour in $\EE_\PP$; it contributes only in the definition of the constants $\EE_\PP(;n)=\PP(0,n)$. The substitution maps of $\EE_\PP$ are defined in an obvious manner. For a small category $\CC$, we define a coloured operad $\FF_\CC$ whose colours are the objects of $\CC$ and whose operations are given by $\FF_\CC(c_1,\dots,c_k;c)=\CC(c_1,c)\times\cdots\times\CC(c_k,c)$. Again, the substitution maps of $\FF_\CC$ are defined in an obvious manner. Both constructions are functorial. Moreover, for $\PP=\Delta\Sg_+$, we have a canonical map of coloured operads $\EE_{\Delta\Sg_+}\to\FF_{\Delta\Sg}$ induced by the inclusions $[n_i]\to[n_1]*\cdots *[n_k]$. By Proposition \ref{duallattice}, the lattice path operad may then be identified with the following pullback\begin{diagram}[small]\LL&\rTo&\EE_{\Delta\Sg_+}\\\dTo&&\dTo\\\FF_\Delta&\rTo&\FF_{\Delta\Sigma}\end{diagram}in the category of coloured operads.\end{rmk}

\subsection{Filtration by complexity}

For each $1\leq i< j\leq k$, there are canonical projection functors $p_{ij}:[n_1 +1]\otimes\cdots\otimes[n_k +1]\longrightarrow[n_i+1]\otimes[n_j+1].$ These functors, together with the unique functor in $\Cat_{*,*}([1],[n+1])$, induce maps
  $$\phi_{ij}:\LL(n_1,\ldots, n_k; n)\to\LL(n_i,n_j;0),\quad 1\leq i<j\leq k.$$

\begin{dfn}For each $x\in\LL(n_1,\dots,n_k;n)$ and each $1\leq i<j\leq k$, let $c_{i j}(x)$ be the number of changes of directions (i.e. corners) in the lattice path $\phi_{i j}(x)$. The \emph{complexity index} $c(x)$ of $x\in \LL(n_1,\dots,n_k;n)$ is defined by $$c(x)=\max_{1\leq i<j\leq k}c_{i j}(x).$$The \emph{$m$-th filtration stage} $\LL_m$ of the lattice path operad $\LL$ is defined by$$\LL_m(n_1,\dots,n_k;n)=\{x\in\LL(n_1,\dots,n_k;n)\,|\,c(x)\leq m\}.$$\end{dfn}

It is readily verified that $(\LL_m)_{m\geq 0}$ defines an exhaustive filtration of $\LL$ by $\NN$-coloured suboperads. The suboperad $\LL_0$ only contains unary operations and coincides with the underlying category $\LL_u$ of $\LL$. Therefore, for each $m\geq 0$, we have $(\LL_m)_u=\LL_m\cap\LL_u=\LL_u=\Delta$, cf. (\ref{Joyal}). In particular, each $\LL_m$-algebra $(X_n)_{n\geq 0}$ carries a canonical \emph{cosimplicial} structure. Moreover, for $m\geq 1$, the coloured operad $\LL_m$ contains the permutation operad as the endomorphism operad of colour $0$, i.e. $X_0$ has the structure of an associative monoid. With increasing $m$, an $\LL_m$-algebra structure on $(X_n)_{n\geq 0}$ introduces higher and higher commutativity constraints on the monoid $X_0$ in a compatible way with the cosimplicial structure on $(X_n)_{n\geq 0}$.

\begin{rmk}In the dual description (\ref{duallattice}) of the lattice path operad, the maps $\phi_{ij}:\LL(n_1,\dots,n_k;n)\to\LL(n_i,n_j;0)$ are induced by the inclusions $[n_i]*[n_j]\inc[n_1]*\cdots*[n_k]$ and the unique map $[n]\to[0]$. In particular, the individual complexity indices $c_{ij}(x)$ of a lattice path $x$ count the number of times the associated integer-string switches from $i$ to $j$ or from $j$ to $i$, if one runs through it from left to right, cf. \cite[Definition 8.1]{MS3} from where we borrowed the term complexity.\end{rmk}

\begin{rmk}\label{discrete}Since $\LL_m$ is discrete, $\LL_m$ acts in \emph{any} cocomplete, closed symmetric monoidal category $\EE$ in exactly the same way as e.g. a discrete group acts. Formally, we just use that $\Set\to\EE:S\mapsto\coprod_{S}I_\EE$ is a strong monoidal functor, since the tensor $-\otimes_\EE-$ commutes with coproducts in each variable. We tacitly apply this functor at each time $\LL_m$ is said to act on a sequence $(X_n)_{n\geq 0}$ of objects of $\EE$.\end{rmk}

\begin{rmk}\label{comparison}For $\EE=\Top$, the structure of an \emph{$\LL_m$-algebra} is strongly related to McClure and Smith's \cite{MS3} structure of a \emph{$\Xi^m$-algebra}. The only difference comes from the fact that for McClure and Smith the underlying objects are \emph{coaugmented} cosimplicial spaces, while for us the underlying objects are just cosimplicial spaces. See \cite[Remark 6.11]{MS3} how to get from their setting to ours. Beside this difference, the passage from an $\LL_m$-algebra to a $\Xi^m$-algebra is the passage from the coloured operad $\LL_m$ to the functor-operad $\xi(\LL_m)=\Xi^m$ as explained in Proposition \ref{convolution}. To be more precise, the colimit presentation of the functor-operads $\Xi$ (in \cite[page 1125]{MS3}) and $\Xi^m$ (in \cite[Definition 8.4]{MS3}) is \emph{isomorphic} to the coend-formula of the functor-operads $\xi(\LL)$ and $\xi(\LL_m)$, cf. the definition preceding (\ref{convolution}), since McClure and Smith's triples $\overline{k}\leftarrow T\rightarrow S$ are just another combinatorial description of lattice paths, equivalent to our integer-string representation (\ref{string}). The operad structure of $\LL$ is implicitly present in diagram (6.2) of \cite[page 1126]{MS3}. It was one of the purposes of this article to motivate somehow the constructions of McClure and Smith by means of the lattice path operad equipped with its complexity filtration. Moreover, it seems to us that the concept of an algebra over a coloured operad is conceptually easier than that of an algebra over a functor-operad. Modulo the differences just mentioned, Proposition \ref{L2} below is thus a reformulation of \cite[Propositions 2.3 and 10.3]{MS3}. Other relevant sources can be found in \cite{Ba1} and \cite{TT}.\end{rmk}

Cosimplicial objects in any cocomplete category $\EE$ carry a (non-symmetric) monoidal structure, induced from the monoidal structure on $\Delta_+$ by Day convolution \cite{Da}. This monoidal structure on cosimplicial objects will be denoted $\square$, cf. \cite[Proposition 2.2]{MS3}. Following Gerstenhaber-Voronov \cite{GV}, a non-symmetric operad $\OO$ will be called \emph{multiplicative} if $\OO$ comes equipped with a map of non-symmetric operads $\Ass\to\OO$ where $\Ass$ is the operad for monoids, i.e. $\Ass(n)=I_\EE$ for $n\geq 0$.

\begin{prp}\label{L2}Let $\EE$ be a cocomplete, closed symmetric monoidal category. The category of $\LL_1$-algebras (resp. $\LL_2$-algebras) in $\,\EE$ is isomorphic to the category of cosimplicial $\square$-monoids (resp. multiplicative non-symmetric operads) in $\EE$.\end{prp}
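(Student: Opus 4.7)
The plan is to use the integer-string representation (\ref{string}) together with the dual description of Proposition~\ref{duallattice} to identify, for $m=1,2$, the generators of $\LL_m$ over its unary part $\LL_u=\Delta$ with the extra structure of a cosimplicial $\square$-monoid, respectively of a multiplicative non-symmetric operad.

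For the $\LL_1$ case, the condition $c(x)\leq 1$ forces every pair of colours to appear in the integer-string of $x$ as two disjoint monochromatic blocks; hence the string takes the form $\sigma(1)^{n_{\sigma(1)}+1}\cdots\sigma(k)^{n_{\sigma(k)}+1}$ for a uniquely determined $\sigma\in\Sg_k$. By $\Sg_k$-equivariance of the coloured operad it suffices to describe the standard-order operations ($\sigma=\mathrm{id}$), which by Joyal-duality and Proposition~\ref{duallattice} are in bijection with $\Delta_+([n_1]*\cdots*[n_k],[n])$. The renumbering-and-substitution rule of (\ref{string}) shows that the operadic composition of $\LL_1$ restricted to standard-order elements matches the join and composition in $\Delta_+$. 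An $\LL_1$-algebra therefore amounts to a cosimplicial object $(X_n)_{n\geq 0}$ equipped with morphisms $X_{n_1}\otimes\cdots\otimes X_{n_k}\to X_n$ indexed by $\Delta_+([n_1]*\cdots*[n_k],[n])$ and compatible with composition, which by Day convolution along $*$ on $\Delta_+$ is exactly a cosimplicial $\square$-monoid.

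For the $\LL_2$ case, the restriction $c(x)\leq 2$ enlarges the allowed strings so that every pair of colours appears as at most three alternating monochromatic blocks. The $\LL_1$-suboperad of $\LL_2$ already provides on $\OO(n):=X_n$ a cosimplicial $\square$-monoid structure and hence, via iterated multiplication of the $\square$-unit, a morphism $\Ass\to\OO$ of non-symmetric operads. The new generators of $\LL_2$ over $\LL_1$ come from standard-order integer-strings of shape $1^{a}2^{n+1}1^{b}$ with $a+b=m+1$ and appropriate subdivisions; these encode precisely the partial operadic compositions $\circ_i:\OO(m)\otimes\OO(n)\to\OO(m+n-1)$ for $1\leq i\leq m$. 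Composition of these lattice paths in $\LL_2$ reproduces both the associativity and equivariance of the $\circ_i$ and their compatibility with the elements $\mu_n\in\OO(n)$ in the image of $\Ass\to\OO$, realising an $\LL_2$-algebra as a multiplicative non-symmetric operad.

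The main obstacle is the combinatorial bookkeeping that identifies lattice-path composition (by renumbering and string substitution) with the axioms of the target structures. As Remark~\ref{comparison} makes explicit, this verification is essentially carried out by McClure and Smith: under Proposition~\ref{convolution}, $\LL_m$-algebras coincide with $\xi(\LL_m)$-algebras, and modulo the coaugmentation convention the functor-operads $\xi(\LL_1)$ and $\xi(\LL_2)$ agree with their $\Xi^1$ and $\Xi^2$. The proposition therefore follows from \cite[Propositions~2.3 and 10.3]{MS3} translated into our coloured-operad framework.
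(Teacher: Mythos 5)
Your $\LL_1$ argument is correct and is essentially the paper's own: the block decomposition forced by $c(x)\leq 1$ gives the splitting $\LL_1(n_1,\dots,n_k;n)\cong\Delta_+([n_1]*\cdots*[n_k],[n])\times\Sg_k$, identifying $\LL_1$ with the symmetrization of the non-symmetric coloured operad $\Delta_+([-]*\cdots*[-];[-])$, and the conclusion follows by Day convolution. The $\LL_2$ half, however, has a genuine gap as a standalone proof. What you exhibit is a family of lattice paths realizing the \emph{generators} of a multiplicative non-symmetric operad structure (the constants giving the multiplication, the paths $1^a2^{n+1}1^b$ giving the $\circ_i$), together with the claim that lattice-path composition reproduces the expected relations. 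Even granting that claim in full, this only produces a morphism of coloured operads from the operad $\OO$ governing multiplicative non-symmetric operads into $\LL_2$, hence a restriction functor from $\LL_2$-algebras to multiplicative non-symmetric operads. An \emph{isomorphism} of the two categories of algebras requires that morphism to be bijective, i.e. two further facts you never address: (i) \emph{generation} --- every lattice path of complexity $\leq 2$ is an operadic composite of unary operations, constants and $\circ_i$-paths; and (ii) \emph{absence of extra relations} --- composites of these generators that coincide in $\LL_2$ already coincide in $\OO$. This is precisely what the paper's proof supplies: it observes that $c(x)\leq 2$ holds \emph{if and only if} the minimal substrings $\gamma_i(x)$ are pairwise disjoint or nested, and turns this nesting structure into an explicit bijection between complexity-$\leq 2$ lattice paths and the labelled planar rooted trees of the coloured operad of \cite[1.5.6]{BM2}, with inverse given by the traversal of the tree taking each edge exactly twice; bijectivity of this correspondence (compatibly with substitution) establishes (i) and (ii) simultaneously.

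Your fallback --- deferring the bookkeeping to \cite[Propositions 2.3 and 10.3]{MS3} via Remark \ref{comparison} --- does not discharge the gap, it relocates it. That remark asserts, but does not prove, the isomorphism $\xi(\LL_m)\cong\Xi^m$; moreover McClure--Smith work with \emph{coaugmented} cosimplicial spaces, whereas Proposition \ref{L2} is stated for an arbitrary cocomplete closed symmetric monoidal $\EE$. The translation across these differences is exactly the ``combinatorial bookkeeping'' your proof omits, and the paper gives the self-contained tree argument precisely so that Proposition \ref{L2} does not rest on it. A smaller point: the morphism $\Ass\to\OO$ cannot be produced from the $\square$-monoid structure alone ``by iterated multiplication of the $\square$-unit'', since being a morphism of non-symmetric operads presupposes the $\circ_i$ already in place; it should instead be read off from the constants $\LL_2(;n)=\LL(;n)=\{*\}$ and the relations they satisfy against the $\circ_i$-paths.
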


\begin{proof}The dual description (\ref{duallattice}) of $\LL$ implies a canonical splitting$$\LL_1(n_1,\dots,n_k;n)\cong\Delta_+([n_1]*\cdots*[n_k],[n])\times\Sg_k$$which identifies $\LL_1$ with the \emph{symmetrization} of the non-symmetric $\NN$-coloured operad $\Delta_+([-]*\cdots*[-];[-])$. Since symmetrization does not change the notion of algebra, $\LL_1$-algebras are the same as $\Delta_+([-]*\cdots*[-];[-])$-algebras. But the latter are (almost by definition) the same as cosimplicial $\square$-monoids.

We shall now identify $\LL_2$ with the $\NN$-coloured operad $\OO$ whose algebras are multiplicative non-symmetric operads. In \cite[1.5.6]{BM2} the coloured operad for symmetric operads is described in detail. Forgetting the symmetries and adding multiplications leads to the following description: $\OO(n_1,\dots,n_k;n)$ is the set of (isomorphism classes of) pairs $(T,v)$ consisting of a planar rooted tree $T$ with $n$ input edges, and a labelling $v:\{1,\dots,k\}\inc V(T)$ of $k$ vertices of $T$, such that the vertex $v(i)$ has $n_i$ input edges, vertices of valence $2$ are labelled, and any inner edge has at least one labelled extremity. (By convention, the external edges, i.e. the input and root edges of the tree, are open on one side; stumps, i.e. labelled or unlabelled vertices with no input edge, are allowed). The operad substitution maps of $\OO$ are defined by an obvious substitution of trees into vertices. We have to construct an isomorphism of $\NN$-coloured operads $\LL_2\cong\OO$.

The labelled tree $(T_x,v_x)$ associated to $x\in\LL_2(n_1,\dots,n_k;n)$ is constructed as follows: for each $i=1,\dots,k,$ the representing integer-string of $x$ has a minimal substring $\gamma_i(x)$ containing all occurrences of $i$. The complexity index of the lattice path is $\leq 2$ \emph{if and only if} these substrings either have empty intersection or contain each other, i.e. form a \emph{nested sequence} of integer-strings. The labelled tree $(T_x,v_x)$ is inductively defined by assigning to each substring $\gamma_i(x)$ a vertex $v(i)$ supporting the subtrees which arise from the substrings $\gamma_j(x)$ contained in $\gamma_i(x)$. If there is an unnested sequence of consecutive substrings, the latter are joined above a non-labelled vertex of $T_x$. To each subdivision of the integer-string representing $x$ is assigned an input edge of $T_x$; the location of this input edge is uniquely determined by the location of the subdivision. Conversely, given an element $(T,v)\in\OO(n_1,\dots,n_k;n)$, the lattice path $x\in\LL(n_1,\dots,n_k;n)$ such that $(T,v)=(T_x,v_x)$ is obtained by running through $T$ from left to right via the unique edge-path which begins and ends at the root-edge and which takes \emph{each} edge of $T$ \emph{exactly twice} (in opposite directions): the integer-string is given by the sequence of labelled vertices through which the edge-path goes; the subdivisions are given by the sequence of input edges through which the edge-path goes.

We leave it to the reader to check that these two assignments are mutually inverse, and that the resulting bijection respects the operad structures.\end{proof}

\begin{rmk}The three filtration stages $\LL_0$, $\LL_1$ and $\LL_2$ are the only one for which we are able to describe the algebras by generators and relations like above. It would be useful to have analogous descriptions for $m\geq 3$. The following proposition is a first indication of the close relationship between $\LL_m$-algebras and $m$-fold iterated loop spaces. It is a reformulation of \cite[Lemma 11.3]{MS3}. The case $m=2$ (together with Proposition \ref{L2}) recovers \cite[Theorem 3.3]{Si} of Sinha.\end{rmk}

\begin{prp}\label{coalgebra}The simplicial $m$-sphere $S^m=\Delta[m]/\partial\Delta[m]$ is an $\LL_m$-coalgebra in the category $(\Fin_*,\vee,*)$ of finite pointed sets.\end{prp}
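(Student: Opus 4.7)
The plan is to construct the coaction explicitly from the dual description of $\LL$ (Proposition \ref{duallattice}) and verify the coalgebra axioms via the integer-string representation of Section \ref{string}. An operation $x\in\LL_m(n_1,\dots,n_k;n)$ is viewed as a morphism $x:[n_1]*\cdots*[n_k]\to[n]$ in $\Delta\Sg_+$ whose components $x_i:[n_i]\to[n]$ are order-preserving, while the non-basepoint $n$-simplices of $S^m=\Delta[m]/\partial\Delta[m]$ are the surjective order-preserving maps $f:[n]\to[m]$. I would define
$$\phi_x:S^m_n\lrto S^m_{n_1}\vee\cdots\vee S^m_{n_k}$$
by sending such an $f$ to the element $f\circ x_i$ in the $i$-th wedge summand whenever there exists an index $i$ for which $f\circ x_i:[n_i]\to[m]$ is surjective; in all remaining cases (either $f$ is the basepoint, or no such $i$ exists) $f$ is sent to the basepoint.

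The hard part is well-definedness: one must show that \emph{at most one} such $i$ can exist, and this is exactly where the complexity bound $c(x)\leq m$ becomes essential. The argument I would use goes through the integer-string representation of $x$. A surjection $f:[n]\to[m]$ corresponds to partitioning the $n+1$ substrings representing $x$ into $m+1$ nonempty blocks of consecutive substrings, and $f\circ x_i$ is surjective precisely when the letter $i$ appears in every block. Suppose two distinct letters $i,j$ both appeared in all $m+1$ blocks; within each block the $\{i,j\}$-subsequence would then contain at least one $i$ and at least one $j$, contributing at least one switch between $i$ and $j$, yielding $c_{ij}(x)\geq m+1$ and contradicting $c(x)\leq m$.

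The remaining axioms are essentially formal. Simplicial naturality, via the identification $\LL_u(m,n)=\Delta([m],[n])$ from Lemma \ref{Joyal}, reduces to the associativity of composition $(f\circ\alpha)\circ x_i=f\circ(\alpha\circ x_i)$; the operad unit, corresponding to $x=\mathrm{id}_{[n]}$, produces the identity coaction. For the associativity of the substitution $x\circ_i y$, whose components are the $x_j$ for $j\neq i$ together with the composites $x_i\circ y_\ell$, I argue by cases on $\phi_x(f)$: if its unique surjective component lies outside slot $i$ the substitution leaves it unchanged; if it lies in slot $i$, then precisely $\phi_y$ is applied further to $f\circ x_i$, matching the iterated coaction $(\mathrm{id}\vee\phi_y\vee\mathrm{id})\circ\phi_x$. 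Equivariance under $\Sg_k$ comes from the permutation of the components $(x_1,\dots,x_k)$ matching the symmetry of the wedge. Since $\LL_m$ is a suboperad of $\LL$, all substitutions stay within complexity $\leq m$, so the construction indeed yields an $\LL_m$-coalgebra structure on $S^m$ in $(\Fin_*,\vee,*)$.
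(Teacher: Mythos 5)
Your proposal is correct and follows essentially the same route as the paper: the coaction is defined componentwise via the dual description of Proposition \ref{duallattice}, and the whole content lies in showing that the complexity bound $c(x)\leq m$ forces all but at most one component of the coaction to the basepoint. In fact your key counting step is more careful than the one printed in the paper: the paper asserts that two non-basepoint components force the letters $i$ and $j$ to occur in each of the $n+1$ substrings of the integer-string (whence $c(x)\geq n+1$), which is an overstatement --- for instance for $f:[2]\to[1]$ with fibers $\{0,1\}$ and $\{2\}$, and $x=12||21\in\LL(1,1;2)$, both composites $f\circ x_1$ and $f\circ x_2$ are surjective although the middle substring is empty and $c_{12}(x)=2<n+1$. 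What is true, and what you prove, is that both letters must occur in each \emph{block} of consecutive substrings corresponding to a fiber of $f$, giving at least one switch per block and hence the correct bound $c_{ij}(x)\geq m+1>m$, which is exactly what the contradiction requires. Your explicit verification of the simplicial, unit, substitution and equivariance axioms is also fine, though it could be shortened: since the coaction is the restriction of the evident $\LL$-coaction valued in the product $(S^m)_{n_1}\times\cdots\times(S^m)_{n_k}$, and the wedge embeds in the product, factoring through the wedge is the only issue --- which is how the paper implicitly treats it.
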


\begin{proof}Each $x\in\LL(n_1,\ldots,n_k;n)$ induces a $k$-tuple of operators $(x_1,\dots,x_k)\in\Delta([n_1],[n])\times\cdots\times\Delta([n_k],[n])$, cf. (\ref{duallattice}), which coact on $y\in(S^m)_n$ by $x^*(y)=(x_1^*(y),\dots,x_k^*(y))\in(S^m)_{n_1}\times\cdots\times(S^m)_{n_k}$. We shall show that if the complexity index fulfills $c(x)\leq m$ then $x^*(y)\in(S^m)_{n_1}\vee\cdots\vee(S^m)_{n_k}$.

Assume that $x^*(y)$ does not belong to the wedge, i.e. at least two projections of $x^*(y)$ (say in directions $i$ and $j$) are not at the base point of the $m$-sphere $S^m$. Then $y$ is not at the base-point, thus $n\geq m$, and $x_i^*(y)$ and $x_j^*(y)$ are not at the base point, thus the $n$-fold subdivided integer-string representing $x$ contains occurences of $i$ and of $j$ in each of its $n+1$ substrings, whence $c(x)\geq n+1>m$.\end{proof}

We define the \emph{standard cosimplicial space} to be $\delta_{top}:\Delta\to\Top:[n]\mapsto\Delta_n$ where $\Delta_n$ is the standard euclidean $n$-simplex, and where the simplicial operators act as usually by affine extension of the vertex action.

\begin{cor}The $m$-fold loop space $\Omega^mY$ of a based topological space $Y$ carries a canonical action by the $\delta_{top}$-condensation $\Coend_{\LL_m}(\delta_{top})$ of $\LL_m$.\end{cor}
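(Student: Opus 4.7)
The plan is to produce $\Omega^m Y$ as the $\delta_{top}$-totalization of a concrete $\LL_m$-algebra in $\Top$, and then to apply Proposition \ref{coend2} via the condensation/totalization formalism of Section \ref{condensation}.

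First, I would dualize the $\LL_m$-coalgebra structure on $S^m$ supplied by Proposition \ref{coalgebra}. Since the contravariant functor $\Top_*(-,Y):\Fin_*^{\op}\to\Top$ sends wedges to products, applying it to each coaction $x^*:S^m_n\to S^m_{n_1}\vee\cdots\vee S^m_{n_k}$ yields a map
$$\prod_{i=1}^k\Top_*(S^m_{n_i},Y)\lrto\Top_*(S^m_n,Y),$$
and in the same way the simplicial operators of $S^m$ supply the unary (cosimplicial) part, cf.\ Lemma \ref{Joyal}. Thus $Y^{S^m_\bullet}:=\Top_*(S^m_\bullet,Y)$ becomes an $\LL_m$-algebra in $\Top$; the verification of the axioms is formal once one knows that all the operations arise from $\Top_*(-,Y)$ applied to the already-established coalgebra structure on $S^m$.

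Next, the $\delta_{top}$-totalization functor of Section \ref{condensation} transports this $\LL_m$-algebra into a $\Coend_{\LL_m}(\delta_{top})$-algebra on the totalization $\uHom_\Delta(\delta_{top},Y^{S^m_\bullet})$; this is exactly the content of Proposition \ref{coend2}.

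It remains to identify this totalization with $\Omega^m Y$. Unravelling the defining end,
$$\uHom_\Delta(\delta_{top},Y^{S^m_\bullet})=\int_{[n]\in\Delta}\uTop(\Delta_n,\Top_*(S^m_n,Y)),$$
and the standard adjunction between cosimplicial mapping spaces and geometric realization of pointed simplicial sets identifies this end with the pointed mapping space $\Top_*(|S^m|,Y)$. Since $|S^m|=|\Delta[m]/\partial\Delta[m]|$ is the topological $m$-sphere, this is precisely $\Omega^m Y$, and the action so produced is canonical because every step in the construction is functorial in $Y$ and independent of choices. The only non-formal point in the outline is this final identification of a cosimplicial totalization with a pointed mapping space out of a realization, which is a standard end/coend calculation and should not pose a serious obstacle.
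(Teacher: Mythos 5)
Your proposal is correct and follows essentially the same route as the paper: the paper likewise dualizes the $\LL_m$-coalgebra structure on $S^m$ to get an $\LL_m$-algebra structure on the cosimplicial pointed mapping space $(Y,*)^{(S^m,*)}$, applies $\delta_{top}$-totalization/condensation, and identifies the result with $\Omega^m Y=\uTop_*(|S^m|_{\delta_{top}},Y)$ by adjunction. The end/coend identification you flag as the only non-formal point is exactly the adjunction the paper invokes in one line.
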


\begin{proof}By adjunction, the $m$-fold loop space $\Omega^mY=\uTop_*(|S^m|_{\delta_{top}},Y)$ is homeomorphic to the $\delta_{top}$-totalization of the cosimplicial space $(Y,*)^{(S^m,*)}$. The latter is an $\LL_m$-algebra by (\ref{coalgebra}), whence by (\ref{condensation}) a $\Coend_{\LL_m}(\delta_{top})$-action on $\Omega^mY$.\end{proof}

A similar argument applies to higher Hochschild cohomology of commutative algebras in the sense of Pirashvili \cite{Pi}. Indeed, using that $(\Fin_*,\vee,*)$ is the PROP for commutative monoids, any unital commutative ring $A$ gives rise to a strong symmetric monoidal functor $A^{\otimes -}:(\Fin_*,\vee,*)\to(\Mod_\ZZ,\otimes,\ZZ)$; therefore, composing the $\LL_m$-coalgebra $S^m$ with $A^{\otimes -}$ yields an $\LL_m$-coalgebra in $\Mod_\ZZ$ which we shall abbreviate by $A^{\otimes \binom{-}{m}}$, since Joyal-duality (\ref{duality}) induces a canonical bijection between $(S^{m})_n-\{*\}$ and the set ${\binom{n}{m}}$ of subsets of cardinality $m$ of $\{1,\dots,n\}$.

Totalization of $\Hom_\ZZ(A^{\otimes\binom{-}{m}},A)$ with respect to the \emph{standard cosimplicial chain complex} $\delta_\ZZ:\Delta\overset{\delta_{yon}}{\to}\Set^{\Delta^\op}\overset{N_*}{\to}\Ch(\ZZ)$ yields a cochain complex $\HCC_{(m)}^*(A;A)$ whose cohomology computes Pirashvili's higher Hochschild cohomology $\HH_{(m)}^*(A;A)$. In particular, $\HCC_{(1)}^*(A;A)$ is the usual normalized Hochschild cochain complex of $A$.

\begin{cor}The higher Hochschild cochains $\HCC_{(m)}^*(A;A)$ of a commutative algebra $A$ carry an action by the $\delta_\ZZ$-condensation $\Coend_{\LL_m}(\delta_\ZZ)$ of $\LL_m$.\end{cor}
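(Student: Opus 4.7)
The plan is to imitate the preceding argument for $\Omega^m Y$, replacing topological spaces by $\ZZ$-modules and the pointed internal hom by $\Hom_\ZZ(-,A)$. The key new ingredient is that the internal hom into $A$ converts coactions to actions only by invoking the commutative multiplication on $A$.

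First, since $A$ is a commutative unital ring, the Loday-type functor $A^{\otimes -}:(\Fin_*,\vee,*)\to(\Mod_\ZZ,\otimes,\ZZ)$ is strong symmetric monoidal, as already recorded in the excerpt immediately preceding the statement. Composing with the $\LL_m$-coalgebra structure on $S^m$ given by Proposition \ref{coalgebra}, and using that strong symmetric monoidal functors transport set-operad coalgebras, produces an $\LL_m$-coalgebra $A^{\otimes\binom{-}{m}}$ in $\Mod_\ZZ$: for each $x\in\LL_m(n_1,\dots,n_k;n)$ one obtains a coaction $A^{\otimes\binom{n}{m}}\to A^{\otimes\binom{n_1}{m}}\otimes\cdots\otimes A^{\otimes\binom{n_k}{m}}$, coherent in $\LL_m$.

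Next, I would convert this coalgebra into an $\LL_m$-algebra on the cosimplicial $\ZZ$-module $\Hom_\ZZ(A^{\otimes\binom{-}{m}},A)$: for each $x$ as above, precompose with the coaction and postcompose with the multiplication $A^{\otimes k}\to A$ (well-defined because $A$ is a commutative monoid in $\Mod_\ZZ$) to obtain
\[\Hom_\ZZ(A^{\otimes\binom{n_1}{m}},A)\otimes\cdots\otimes\Hom_\ZZ(A^{\otimes\binom{n_k}{m}},A)\to\Hom_\ZZ(A^{\otimes\binom{n}{m}},A).\]
This is precisely the principle underlying Proposition \ref{coend1}, transcribed into the coloured-operad framework. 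The result is a cosimplicial $\LL_m$-algebra in $\Mod_\ZZ$, hence also in $\Ch(\ZZ)$ via the canonical inclusion in degree zero.

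Finally, the $\delta_\ZZ$-totalization functor of subsection \ref{condensation} takes this cosimplicial $\LL_m$-algebra to a $\Coend_{\LL_m}(\delta_\ZZ)$-algebra whose underlying cochain complex is, by definition, $\HCC^*_{(m)}(A;A)$. The only genuinely non-trivial point is the variance bookkeeping of the third step, namely checking that combining the coaction with the multiplication of the commutative monoid $A$ yields a coherent $\LL_m$-action; but this is the contravariant mirror of Proposition \ref{coend1} and is essentially automatic once the structural principles of Section \ref{cond} have been assimilated.
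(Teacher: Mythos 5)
Your proposal is correct and follows essentially the same route as the paper: transport the $\LL_m$-coalgebra $S^m$ along the strong symmetric monoidal functor $A^{\otimes-}$, dualize into the commutative monoid $A$ to get a cosimplicial $\LL_m$-algebra $\Hom_\ZZ(A^{\otimes\binom{-}{m}},A)$, and apply $\delta_\ZZ$-totalization/condensation from Section \ref{cond}. The paper compresses this into one sentence; your expansion, including the observation that equivariance of the dualized action rests on the commutativity of $A$, is exactly the implicit content of that sentence.
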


\begin{proof}The simplicial module $A^{\otimes\binom{-}{m}}$ is an $\LL_m$-coalgebra, thus the cosimplicial module $\Hom_\ZZ(A^{\otimes\binom{-}{m}},A)$ is an $\LL_m$-algebra, whence by (\ref{condensation}) a canonical $\Coend_{\LL_m}(\delta_\ZZ)$ operad action on $\HCC_{(m)}^*(A;A)$.\end{proof}

\begin{rmk}McClure and Smith show that $\Coend_{\LL_m}(\delta_{top})$ is a topological $E_m$-operad, cf. \cite[Theorem 9.1]{MS3} and (\ref{mainexamples}a). It follows from (\ref{mainexamples}c) below that $\Coend_{\LL_m}(\delta_\ZZ)$ is an $E_m$-operad in chain complexes. Observe however, while the $E_m$-action on $\Omega^mY$ is ``optimal'', the $E_m$-action on $\HCC^*_{(m)}(A;A)$ is not. Indeed, the case $m=1$ suggests that $\HCC^*_{(m)}(A;A)$ carries a canonical $E_{m+1}$-action. A first step in this direction has been made by Ginot, cf. \cite[Theorem 3.4]{Gi}, where an $H_*(E_{m+1})$-action on $\HH_{(m)}^*(A;A)$ is constructed. This is related to higher forms of the Deligne conjecture as formulated by Kontsevich.\end{rmk}\vspace{1ex}

The $\LL_m$-coalgebra structure (\ref{coalgebra}) on the simplicial $m$-sphere $S^m$ derives from a general $\LL$-coalgebra structure on simplicial sets considered as graded objects in $(\Set,\times)$. By abelianization, dualization and condensation, this set-theoretical $\LL$-coalgebra structure induces a canonical $E_\infty$-structure on cochains:

\begin{prp}For each simplicial set $X$, the simplicial abelian group $\ZZ[X]$ (resp. cosimplicial abelian group $\Hom(\ZZ[X],\ZZ)$) is an $\LL$-coalgebra (resp. $\LL$-algebra) in abelian groups. In particular, the normalized cochain complex $N^*(X;\ZZ)$ carries a canonical $E_\infty$-action by $\Coend_\LL(\delta_\ZZ)$.\end{prp}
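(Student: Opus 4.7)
The plan is to extend Proposition \ref{coalgebra} to an arbitrary simplicial set, transport the resulting set-theoretical coalgebra structure to abelian groups, dualize, and invoke the condensation machinery of Section \ref{cond}.

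First I would show that for any simplicial set $X$ the sequence $(X_n)_{n\geq 0}$ is an $\LL$-coalgebra in $(\Set,\times)$ by exactly the formula already used in Proposition \ref{coalgebra}: the dual description (\ref{duallattice}) turns a lattice path $x\in\LL(n_1,\dots,n_k;n)$ into simplicial operators $x_i\in\Delta([n_i],[n])$, and
$$X_n\longrightarrow X_{n_1}\times\cdots\times X_{n_k},\qquad y\mapsto(x_1^*y,\dots,x_k^*y),$$
is the coaction. Associativity, unitality and $\Sg_k$-equivariance all come directly from join and composition in $\Delta\Sg_+$. The complexity restriction of Proposition \ref{coalgebra} disappears because the target is equipped with the cartesian rather than the wedge product, so the coaction is free to hit ``off-wedge'' sectors. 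Since $\ZZ[-]:(\Set,\times)\to(\mathrm{Ab},\otimes)$ is strong symmetric monoidal, this transports to an $\LL$-coalgebra structure on $\ZZ[X]$; dualizing degreewise by $\Hom_\ZZ(-,\ZZ)$ then sends each coalgebra structure map $\ZZ[X_n]\to\ZZ[X_{n_1}]\otimes\cdots\otimes\ZZ[X_{n_k}]$ to an algebra structure map $\Hom(\ZZ[X_{n_1}],\ZZ)\otimes\cdots\otimes\Hom(\ZZ[X_{n_k}],\ZZ)\to\Hom(\ZZ[X_n],\ZZ)$ through the canonical pairing, yielding the desired $\LL$-algebra structure on the cosimplicial abelian group $\Hom(\ZZ[X],\ZZ)$.

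Applying $\delta_\ZZ$-totalization as in (\ref{condensation}) to this $\LL$-algebra then produces, on the normalized cochain complex $N^*(X;\ZZ)=\uHom_\Delta(\delta_\ZZ,\Hom(\ZZ[X],\ZZ))$, a natural action of the condensed operad $\Coend_\LL(\delta_\ZZ)$. To identify this as an $E_\infty$-action I would use the filtration $\LL=\colim_m\LL_m$ (whose underlying category $\Delta$ is unchanged throughout): the operad $\Coend_\LL(\delta_\ZZ)$ is then a sequential colimit of the operads $\Coend_{\LL_m}(\delta_\ZZ)$, and each of the latter is an $E_m$-operad in chain complexes by the main theorem of Section \ref{complexity} (together with the examples (\ref{mainexamples})). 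The colimit is consequently $E_\infty$.

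The main obstacle I expect lies in the equivariance check of the first step: one must match the symmetric-group action on $\LL(n_1,\dots,n_k;n)$ coming from the permutation operad (\ref{string}) with the coordinate switch on $X_{n_1}\times\cdots\times X_{n_k}$. Under the dual description this reduces to tracking how the total orderings on the fibers of maps in $\Delta\Sg_+$ interact with the join operation, and a careful bookkeeping is needed. A secondary technical point is the commutation of the condensation end over $\Delta$ with the filtered colimit over $m$ in the last step; this should follow because the indexing category $\Delta$ is fixed while only the coend-piece varies with $m$, and filtered colimits are exact in chain complexes.
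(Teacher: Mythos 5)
Your construction of the action itself matches the paper's proof step for step: the coaction is defined by exactly the same formula $y\mapsto(x_1^*(y),\dots,x_k^*(y))$ using the components of (\ref{duallattice}), then transported along the strong monoidal functor $\ZZ[-]$, dualized, and the resulting $\LL$-algebra is condensed using the identification of $\delta_\ZZ$-totalization with conormalization (\ref{conorm}), so that $N^*(X;\ZZ)=\uHom_\Delta(\delta_\ZZ,\Hom(\ZZ[X],\ZZ))$ carries the $\Coend_\LL(\delta_\ZZ)$-action by (\ref{condensation}). (The paper writes the coaction directly on $\ZZ[X]$ rather than first in $(\Set,\times)$, but this is the same argument, and your remark about why the complexity bound of Proposition \ref{coalgebra} disappears is correct.)

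The gap is in your last step, where you identify the $E_\infty$-structure. You assert that $\Coend_\LL(\delta_\ZZ)\cong\colim_m\Coend_{\LL_m}(\delta_\ZZ)$ because ``the indexing category $\Delta$ is fixed'' and ``filtered colimits are exact in chain complexes''. But condensation is an \emph{end}: $\Coend_\LL(\delta_\ZZ)(k)=\uHom_\Delta(\delta_\ZZ,\xi_k(\delta_\ZZ)^\cdot)$, and for cosimplicial chain complexes this totalization is an \emph{infinite product} over cosimplicial degrees (its degree-$p$ part is $\prod_{n\geq 0}N^n(\xi_k(\delta_\ZZ))_{n+p}$). Exactness of filtered colimits means commutation with \emph{finite} limits; infinite products do not commute with filtered colimits. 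The problem is not vacuous here, because in every arity $k\geq 2$ the complexity filtration is unbounded as the colours grow (alternating strings $1212\cdots$ give lattice paths of arbitrarily large complexity), so the totalization contains families $(y_n)_{n\geq 0}$ whose components have unbounded complexity; such elements lie in $\Coend_\LL(\delta_\ZZ)(k)$ but in no $\Coend_{\LL_m}(\delta_\ZZ)(k)$, and your comparison map is a proper inclusion rather than an isomorphism. Moreover, even granted a colimit description, ``a sequential colimit of $E_m$-operads is $E_\infty$'' needs the colimit to be a homotopy colimit with connecting maps realizing the standard stabilization. The paper argues differently: by (\ref{mainexamples}c), strong $\delta_\ZZ$-reductivity makes the augmentation $\Coend_\LL(\delta_\ZZ)(k)\to\xi_k(\delta_\ZZ)^0\cong\XX(k)$ a trivial fibration, so the condensation is weakly equivalent to the surjection operad $\XX$, and \cite[Lemma 1.6.1]{BF} then shows both are $E_\infty$-chain operads. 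Replacing your colimit argument by this comparison (or simply invoking the Remark following the proposition together with (\ref{mainexamples}c)) repairs the proof.
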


\begin{proof}The $\LL$-coaction $\ZZ[\LL(n_1,\dots,n_k;n)]\otimes\ZZ[X_n]\to\ZZ[X_{n_1}]\otimes\cdots\otimes\ZZ[X_{n_k}]$ is defined on generators by $x\otimes y\mapsto x_1^*(y)\otimes\cdots\otimes x_k^*(y)$ where $y\in X_n$ and $(x_1,\dots,x_k)$ are the components of $x\in\LL(n_1,\dots,n_k;n)$. The $\LL$-action derives by dualization. The second statement follows from (\ref{condensation}) and the fact that $\delta_\ZZ$-totalization of a cosimplicial abelian group may be identified with its conormalization, cf. (\ref{conorm}).\end{proof}

\begin{rmk}In (\ref{mainexamples}c) we show that the $\delta_\ZZ$-condensation $\Coend_\LL(\delta_\ZZ)$ contains the \emph{surjection operad} $\XX$ of \cite{BF,MS2} as a \emph{suboperad}. The $\Coend_\LL(\delta_\ZZ)$-action on the cochains $N^*(X;\ZZ)$ restricts therefore to a $\XX$-action, and it can be checked that the resulting action coincides (up to a sign) with the one described in \cite{BF,MS2}. Moreover, the inclusion $\XX\inc\Coend_\LL(\delta_\ZZ)$ is a weak equivalence of chain operads. Therefore, \cite[Lemma 1.6.1]{BF} implies that $\XX$ and $\Coend_\LL(\delta_\ZZ)$ are $E_\infty$-chain operads. The existence of an explicit $E_\infty$-action on $N^*(X;\ZZ)$ is important since by theorems of Mandell \cite{Man}, such an action determines (under suitable finiteness and completeness assumptions) the weak homotopy type of $X$.\end{rmk}

\section{\label{complexity} Condensation of the lattice path operad}

In order to analyse the operads obtained from the lattice path operad through $\delta$-condensation (\ref{condensation}) we need to recall some constructions from \cite{Be2} concerning the so-called \emph{complete graph operad} $\cgr.$

A partially ordered set (for short: poset) $\Aa$ will be identified with the category having same objects as $\Aa$, and having a morphism $\al\to\be$ whenever $\al\leq\be$ in $\Aa$. The \emph{classifying space} $B\Aa$ of $\Aa$ is the geometric realization of the nerve of $\Aa$.

\subsection{Cellulation of topological operads by poset-operads}\label{cellulation}Given a topological space $X$ and a poset $\Aa$ we say that $X$ admits an \emph{$\Aa$-cellulation} if there is a functor $c:\Aa\to\Top:\al\mapsto c_\al$ fulfilling the following three properties:\begin{enumerate}\item[(i)]$\colim_{\al\in\Aa}c_\al\cong X$;\item[(ii)] for each $\al\in\Aa$, the canonical map $\colim_{\be<\al}c_\be\to c_\al$ is a closed cofibration;\item[(iii)] for each $\al\in\Aa$, $c_\al$ is contractible.\end{enumerate}

Fixing the isomorphism (i) allows us to consider $c_\al$ as a closed subset of $X$ which we call the \emph{cell} labelled by $\al$. By (ii) the colimit $\partial c_\al=\colim_{\be<\al}c_\be$ is a closed subset of $c_\al$ which we call the \emph{boundary} of $c_\al$. Points in $c_\al-\partial c_\al$ are called \emph{interior points} of $c_\al$. A cell $c_\al$ is called \emph{proper} if it has interior points or, equivalently, if the closed cofibration $\partial c_\al\to c_\al$ is not an isomorphism.

Any space $X$ with $\Aa$-cellulation induces homotopy equivalences\begin{diagram}X&\lTo^{\sim}&\hocolim_{\al\in\Aa}c_\al&\rTo^{\sim}&B\Aa.\end{diagram}The left arrow is a homotopy equivalence since condition (ii) expresses that the functor $\al\mapsto c_\al$ is \emph{Reedy-cofibrant} with respect to \emph{Str\o m's model structure} on $\Top$ so that colimit and homotopy colimit are homotopy equivalent; the right arrow is a homotopy equivalence by (iii) since the classifying space of the poset $\Aa$ is the homotopy colimit of the constant diagram, i.e. $B\Aa=\hocolim_{\al\in\Aa}*$.

Given a topological operad  $X=(X(k))_{k\geq 0}$ and a poset operad $\Aa=(\Aa(k))_{k\geq 0}$, we say that $X$ admits an \emph{$\Aa$-cellulation} if, for each $k\geq 0$, $X(k)$ admits a $\Sg_k$-equivariant $\Aa(k)$-cellulation such that the operad substitution maps of $X$ take the cell-product $c_\al\times c_{\al_1}\times\cdots\times c_{\al_k}\subset X(k)\times X(n_1)\times\cdots\times X(n_k)$ into the cell $c_{\al(\al_1,\dots,\al_k)}\subset X(n_1+\cdots+n_k).$ Since homotopy colimits commute with products we get diagrams of homotopy equivalences\begin{diagram}X(k)&\lTo^{\sim}&\hocolim_{\al\in\Aa(k)}c_\al&\rTo^{\sim}&B\Aa(k)\end{diagram}in which all three objects define topological operads and both arrows are morphisms of topological operads. Therefore, \emph{if the operad $X$ admits a cellulation by the poset-operad $\Aa$ then $X$ and $B\Aa$ are weakly equivalent as topological operads}.

\subsection{The complete graph operad}For each $k\geq 0$, let $\cgr(k)=\NN^{\binom{k}{2}}\times\Sg_k$ and define an inclusion $\rho_k:\cgr(k)\inc\prod_{1\leq i<j\leq k}\cgr(2)$ by $(\mu,\sg)\mapsto((\mu_{ij},\sg_{ij}))_{1\leq i<j\leq k}$ where $\mu_{ij}$ is the $ij$-th component of $\mu$ and $\sg_{ij}\in\Sg_2$ is neutral (resp. non-neutral) if $\sg(i)<\sg(j)$ (resp. $\sg(i)>\sg(j)$).

The set $\cgr(2)$ is partially ordered by $(m,\sg)\leq(n,\tau)$ iff $m<n$ or $(m,\sg)=(n,\tau)$; the partial order of $\cgr(k)$ is induced from the product-order of the target of $\rho_k$.

An operad substitution map $\cgr(k)\times\cgr(n_1)\times\cdots\times\cgr(n_k)\to\cgr(n_1+\cdots+n_k)$ is defined by $((\mu,\sg);(\mu_1,\sg_1),\dots,(\mu_k,\sg_k))\mapsto(\mu(\mu_1,\dots,\mu_k),\sg(\sg_1,\dots,\sg_k))$ where the right component is given by the permutation operad, and the left component by$$\mu(\mu_1,\dots,\mu_k)_{ij}=\begin{cases}\mu_{ij}&\text{if }i,j\text{ belong to different blocks of }\binom{n_1+\cdots+n_k}{2},\\(\mu_s)_{ij}&\text{if  }i,j\text{ belong to the same block }\binom{n_s}{2}\text{ of }\binom{n_1+\cdots+n_k}{2}.\end{cases}$$

The poset-operad $\cgr$ is the \emph{complete graph operad} from \cite{Be2}, cf. also \cite{BFSV,BFV,MS3}. An element $(\mu,\sg)\in\cgr(k)$ can be considered as a complete graph on the vertex set $\{1,\dots,k\}$ equipped with an \emph{edge-labeling} by the integers $\mu_{ij}$ and an \emph{acyclic edge-orientation} $\sg(1)\to\sg(2)\to\cdots\to\sg(k)$. For consistency, the symmetric group $\Sg_k$ acts here on $\cgr(k)$ from the left, i.e. $\tau_*(\mu,\sg)=(\tau\mu,\tau\sg)$ where $(\tau\mu)_{ij}=\mu_{\tau^{-1}(i)\tau^{-1}(j)}$. The operad substitution maps correspond to a substitution of complete graphs into vertices of a complete graph. Indeed, there is a definition of the orientations $\sg(\sg_1,\dots,\sg_k)_{ij}$ completely analogous to the definition of $\mu(\mu_1,\dots,\mu_k)_{ij}$ above.

Below we will use an extension $\cgr^{ex}=(\cgr^{ex}(k))_{k\geq 0}$ of $\cgr$ consisting of those labelled and oriented complete graphs $((\mu_{ij},\sg_{ij}))\in\prod_{1\leq i<j\leq k}\cgr(2)$ which contain no oriented cycle $i_0\overset{\mu_{i_0,i_1}}{\lrto}i_1\overset{\mu_{i_1,i_2}}{\lrto}i_2\overset{\mu_{i_2,i_3}}{\lrto}\cdots\overset{\mu_{i_{s-2},i_{s-1}}}{\lrto}i_{s-1}\overset{\mu_{i_{s-1},i_0}}{\to}i_s=i_0$ such that $\mu_{i_0,i_1}=\mu_{i_1,i_2}=\cdots=\mu_{i_{s-1},i_0}$. The operad substitution maps of $\cgr$ extend in an obvious way to $\cgr^{ex}$. This \emph{extended complete graph operad} has been introduced by Brun, Fiedorowicz and Vogt in \cite{BFV}. It is remarkable (cf. \cite{Ba2}) that an element $(\mu,\sg)\in\cgr^{ex}(k)$ determines, and is determined by, a sequence of \emph{complementary relations} on $\{1,\dots,k\}$, a notion introduced by Kontsevich and Soibelman in \cite{KS1}. The (extended) complete graph operad is filtered by $\cgr_m$ (resp. $\cgr_m^{ex}$) where $(\mu,\sg)\in\cgr_m$ (resp. $\cgr_m^{ex}$) iff $\mu_{ij}<m$ for all $i<j$.

\begin{thm}[\cite{BFSV,Be2,BFV}]\label{Em} The operad of little $m$-cubes has a $\cgr_m^{ex}$-cellulation. The inclusion $\cgr_m\inc\cgr_m^{ex}$ is a weak equivalence of poset-operads. In particular, the classifying operads $B\cgr_m$ and $B\cgr_m^{ex}$ are topological $E_m$-operads.\end{thm}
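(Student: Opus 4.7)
The plan is to establish the three claims in sequence, each drawing from one of the cited sources. First, I would construct the $\cgr_m^{ex}$-cellulation of the little $m$-cubes operad following \cite{Be2,BFV}. Given a configuration $(c_1,\dots,c_k)$ of $k$ little $m$-cubes, for each pair $i<j$ one records a pair $(\mu_{ij},\sg_{ij})\in\cgr(2)$: the label $\mu_{ij}\in\{0,\dots,m-1\}$ is the smallest coordinate axis along which the projections of $c_i$ and $c_j$ are disjoint, and $\sg_{ij}$ indicates which of the two cubes lies below the other along that axis. This assignment lands in $\cgr_m^{ex}(k)$: the forbidden monochromatic oriented cycles cannot occur because the ``$<$''-relation along a fixed axis is a strict partial order on cubes with disjoint projections. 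The cell $c_\al\subset\mathrm{Little}_m(k)$ labelled by $\al=(\mu,\sg)\in\cgr_m^{ex}(k)$ is defined to be the closure of configurations whose assigned graph equals $\al$, or equivalently those whose assigned graph is $\geq\al$ in the product poset-order. Equivariance is built in, the closed-cofibration condition (ii) of (\ref{cellulation}) follows from the piecewise-affine nature of the defining non-overlap inequalities, and contractibility (iii) is obtained by a canonical deformation (e.g.\ shrinking each cube to its center while preserving the axis-by-axis separation pattern). Compatibility with operad substitution follows directly from the combinatorial rule defining substitution in $\cgr$: substituting configurations of cubes into a cube combines the intra-block graphs with the ambient graph exactly as prescribed by $\mu(\mu_1,\dots,\mu_k)$.

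Second, I would verify that the inclusion $\cgr_m\inc\cgr_m^{ex}$ is a levelwise weak equivalence of posets (hence of classifying spaces). Here the approach is Quillen's Theorem A: for a fixed $\al=(\mu,\sg)\in\cgr_m^{ex}(k)$, the comma poset $\cgr_m/\al$ consists of those globally-ordered labelled graphs refining $\al$. Since any acyclic orientation of a finite complete graph extends to a total order on its vertices, this comma poset is nonempty and filtered (any two linear extensions are dominated by a common refinement after possibly increasing some labels), hence contractible. Applying Quillen's Theorem A levelwise yields the claim.

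Third, combining the two steps via the cellulation principle of (\ref{cellulation}): the zigzag of equivariant homotopy equivalences $\mathrm{Little}_m(k)\lTo\hocolim c_\al\rTo B\cgr_m^{ex}(k)$ is compatible with operad structure, so $\mathrm{Little}_m$ and $B\cgr_m^{ex}$ are weakly equivalent as topological operads. By step two and functoriality of the classifying space, $B\cgr_m\simeq B\cgr_m^{ex}$ as operads. Since $\mathrm{Little}_m$ is an $E_m$-operad by definition, both $B\cgr_m$ and $B\cgr_m^{ex}$ are $E_m$-operads.

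The main obstacle is step one, specifically verifying the cellulation conditions (ii) and (iii) in an operad-equivariant way. Condition (ii) requires that the ``boundary'' $\partial c_\al$, obtained as the colimit of strictly smaller cells, embeds as a closed cofibration into $c_\al$, which demands a careful analysis of how configurations degenerate as some $\mu_{ij}$ strictly increases or as a strict separation collapses. Condition (iii) demands a choice of contracting homotopy that is simultaneously continuous across cells and compatible with the operad substitution maps, so that the resulting cellulation descends to a morphism of operads $B\cgr_m^{ex}\to\mathrm{Little}_m$ (up to the zigzag). Step two is purely combinatorial and step three is a formal consequence of (\ref{cellulation}).
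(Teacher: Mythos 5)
A preliminary remark: the paper offers no proof of this theorem — it is imported wholesale from \cite{BFSV,Be2,BFV} — so your proposal must stand on its own as a reconstruction of those results, and as written it has two genuine gaps. The first is in your definition of the cells. For little cubes, weak separation along a fixed axis (projections with disjoint interiors) is a \emph{closed} condition, so along a convergent sequence of configurations separations can only be created, never destroyed: the assigned graph can only \emph{decrease} in the limit. The correct cell (the one used in \cite{Be2,BFV}, and the exact analogue of the sublevel sets $\xi_{(\mu,\sg)}(\delta)$ appearing in the proof of Theorem \ref{main}) is the lower set $c_\al=\{x:\mathrm{data}(x)\leq\al\}$, i.e. configurations in which every pair $(i,j)$ is weakly separated in some way permitted by $\al_{ij}$. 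Your two purportedly equivalent definitions are both different from this and incompatible with each other. The upper set $\{x:\mathrm{data}(x)\geq\al\}$ makes $\al\mapsto c_\al$ order-\emph{reversing}, so it is not a functor $\cgr_m^{ex}(k)\to\Top$ as required by (\ref{cellulation}), and conditions (i)--(ii) there fail (the ``boundary'' $\colim_{\be<\al}c_\be$ would contain $c_\al$ rather than sit inside it); moreover these upper sets are not closed, since ``not separated along axis $1$'' is an open condition. The closure of the stratum $\{x:\mathrm{data}(x)=\al\}$ is closed, but it is not monotone in $\al$ either: a configuration with data $\be<\al$ whose separations are strict cannot be approximated by configurations with data exactly $\al$ (strict separation is open and survives perturbation), so $\overline{\{\mathrm{data}=\be\}}\not\subseteq\overline{\{\mathrm{data}=\al\}}$ and condition (i) fails again. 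You appear to be transposing the Fox--Neuwirth picture for configurations of \emph{points}, where separation is an open condition and closures do push the data up; for cubes the direction is opposite, and that is precisely what makes the lower-set cells closed.

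The second gap is in your Theorem A argument: the comma posets $\cgr_m/\al$ are in general \emph{not} filtered. Take $m=2$, $k=3$ and $\al\in\cgr_2^{ex}(3)\setminus\cgr_2(3)$ with labels $\mu_{12}=\mu_{23}=1$, $\mu_{13}=0$ and orientations $1\to2$, $2\to3$, $3\to1$ (a cycle, but not monochromatic). The elements $\be_1,\be_2\in\cgr_2(3)$ with all labels $0$ and orientations induced by the linear orders $3<1<2$ and $2<3<1$ both lie below $\al$; but an edgewise check shows that any common upper bound of $\be_1,\be_2$ lying below $\al$ must have edge data $(1,1\to2)$, $(1,2\to3)$, $(0,3\to1)$, i.e. must equal $\al$ itself, which is not in $\cgr_2$. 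So the comma poset is not filtered (in this small example it happens to be a contractible zigzag, but your argument does not detect that); its contractibility in general is precisely the nontrivial combinatorial content of \cite{BFV} and needs a real argument. Finally, a smaller point you partly acknowledged yourself: shrinking cubes to their centers neither stays in the operad (degenerate cubes are not little cubes) nor ends at a single point of the cell, so condition (iii) still requires the genuine deformation of each cell onto a standard configuration as in \cite{Be2}. Step three is fine as a formal consequence of (\ref{cellulation}), but both substantive steps feeding into it need to be repaired.
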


\begin{prp}\label{totcom}The individual complexity indices of lattice paths determine a morphism of coloured operads $c_{tot}:\LL\to\cgr$ which preserves the filtrations (and which takes all colours of $\LL$ to the single colour of $\,\cgr$).\end{prp}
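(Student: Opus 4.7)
The plan is to exhibit $c_{tot}$ explicitly, then verify it is a morphism of operads and respects the complexity filtration. All three steps rely on the integer-string description of lattice paths from (\ref{string}), combined with the combinatorial description of the operad structure on $\cgr$.

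First, I would define $c_{tot}:\LL(n_1,\dots,n_k;n)\to\cgr(k)$ by $x\mapsto(\mu(x),\sg(x))$, where $\mu(x)_{ij}=c_{ij}(x)$ for $1\leq i<j\leq k$, and where $\sg(x)\in\Sg_k$ is the permutation whose inverse lists the integers $1,\dots,k$ in the order of their first appearance in the integer-string representing $x$. This is well defined since each integer $i$ appears $n_i+1\geq 1$ times in the string, so a unique first-appearance total order exists. The resulting acyclic orientation $\sg(x)(1)\to\sg(x)(2)\to\cdots\to\sg(x)(k)$ provides, via $\rho_k$, the component $\sg(x)_{ij}$ on each edge $ij$; that $c_{tot}$ collapses all input colors to the single color of $\cgr$ is automatic, since $\cgr$ is single-colored.

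Next, I would verify compatibility with operad substitution. At the level of integer-strings, a composite $y\circ(z_1,\dots,z_k)$ is obtained by renumbering the strings of the $z_s$'s and inserting them into the occurrences of $s$ in the string of $y$, as in the sample calculation of (\ref{string}). Given two input indices $a<b$ of the composite there are two cases: either $a,b$ lie in different blocks $s\neq t$, in which case every switch between $a$ and $b$ in the composite string is induced by a switch between $s$ and $t$ in $y$, giving $c_{ab}(y\circ(z_1,\dots,z_k))=c_{st}(y)$; or $a,b$ lie in a common block $s$, in which case switches between them occur only within the inserted string of $z_s$, yielding the corresponding complexity index of $z_s$. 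These two cases match exactly the two cases of the substitution formula for $\mu(\mu_1,\dots,\mu_k)_{ab}$ in $\cgr$. A parallel case analysis for the first-appearance ordering reproduces the substitution formula $\sg(\sg_1,\dots,\sg_k)$ from the permutation operad. Equivariance under $\Sg_k$ is then immediate: permuting the input colors simultaneously permutes rows and columns of $\mu$ and the first-appearance ordering in the compatible way.

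Filtration preservation is a direct consequence of the definition $\mu(x)_{ij}=c_{ij}(x)$: if $c(x)\leq m$ then $\mu(x)_{ij}\leq m$ for every $i<j$, so $c_{tot}(x)$ lies in the corresponding filtration stage of $\cgr$. The main obstacle will be the operad-substitution bookkeeping: one must carefully track the renumbering convention, verify that the notion of ``block'' interacts correctly with both the complexity count and the first-appearance ordering, and separately dispose of the different-block and same-block cases. Once the combinatorics of integer-string substitution is set up carefully, the remaining verifications reduce to routine comparisons of formulas.
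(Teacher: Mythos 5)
Your plan follows the paper's proof closely in outline: both define $c_{tot}$ edgewise from the individual complexity indices together with a first-appearance orientation, both check substitution compatibility via the same-block/different-block dichotomy, and both read off filtration preservation from the definitions. But there is a concrete error in your labels. The paper's map is $x\mapsto(c_{ij}(x)-1,\sg_{ij}(x))_{1\leq i<j\leq k}$, with a shift by $-1$, which is legitimate because every lattice path takes at least one step in each direction (so $c_{ij}(x)\geq 1$). The shift is forced by the two filtration conventions: $\LL_m$ is cut out by the \emph{non-strict} inequality $c(x)\leq m$, whereas $\cgr_m$ is cut out by the \emph{strict} inequality $\mu_{ij}<m$. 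With your unshifted labels $\mu(x)_{ij}=c_{ij}(x)$, the path $x=121\in\LL_2(1,0;0)$ has $\mu(x)_{12}=2\not<2$, so $c_{tot}(x)\notin\cgr_2$; in general your map sends $\LL_m$ into $\cgr_{m+1}$, not $\cgr_m$. Your sentence ``if $c(x)\leq m$ then $\mu(x)_{ij}\leq m$, so $c_{tot}(x)$ lies in the corresponding filtration stage'' tacitly replaces the strict inequality in the definition of $\cgr_m$ by a non-strict one. This is not cosmetic: Theorem \ref{main} compares $\Coend_{\LL_m}(\delta)$ with $B_\delta\cgr_m$, an $E_m$-operad by Theorem \ref{Em}; with your indexing the comparison would land in an $E_{m+1}$-operad.

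A second problem is your claimed equality $c_{ab}(y\circ(z_1,\dots,z_k))=c_{st}(y)$ in the different-block case. Your argument (``every switch between $a$ and $b$ in the composite string is induced by a switch between $s$ and $t$ in $y$'') only proves $c_{ab}(y\circ(z_1,\dots,z_k))\leq c_{st}(y)$, and the reverse inequality fails in general: take $y=121\in\LL(1,0;0)$, $z_1=|1\in\LL(0;1)$, $z_2=1\in\LL(0;0)$; the composite is $21\in\LL(0,0;0)$, so $c_{12}$ drops from $2$ to $1$, because the first occurrence of colour $1$ in $y$ receives an \emph{empty} substring of $z_1$ and the corresponding switch of $y$ produces no switch in the composite. (The same phenomenon can reverse the first-appearance orientation, as in this example.) To be fair, the paper's own proof waves at this step with ``direct inspection'', and read as a strict equality of set-operad morphisms its claim has the same defect; what is actually true, and what the sequel uses (the cellulation in the proof of Theorem \ref{main}, where substitution must take $\LL_{(\mu,\sg)}\times\LL_{(\mu_1,\sg_1)}\times\cdots\times\LL_{(\mu_k,\sg_k)}$ \emph{into} the cell indexed by the $\cgr$-composite), is the lax statement in the poset-operad $\cgr$: the total complexity of a composite is $\leq$ the $\cgr$-composite of the total complexities. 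Note that the partial order on $\cgr(2)$ ignores the orientation whenever the label strictly decreases, so the orientation flip in the example above is harmless. The repair for your write-up is therefore: shift the labels by one, and replace the equalities in your substitution analysis by inequalities in the poset $\cgr(k)$, checking that orientations agree whenever the labels do not drop; that weaker statement is both correct and sufficient for every later use of the proposition.
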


\begin{proof}The individual complexity indices determine the left components of a map
$$c_{tot}:\LL(n_1,\ldots, n_k; n) \rightarrow \cgr(k):x\mapsto(c_{ij}(x)-1,\sg_{ij}(x))_{1\leq i<j\leq k}$$
where $\sg_{ij}(x)\in\Sg_2$ is neutral (resp. non-neutral) if the first step of $x$ in direction $i$ precedes (resp. does not precede) the first step of $x$ in direction $j$. In other words, the $\sg_{ij}(x)$ are the components of a permutation $\sg_x\in\Sg_k$ which is determined by the property that the sequence $\sg_x(1)\sg_x(2)\cdots\sg_x(k)$ is the first subsequence (w/to the lexicographical order) of the integer-string representation of $x$, which represents a permutation.

That $\LL\to\cgr$ respects the filtrations follows immediately from the definitions. Projection on the second factor induces an operad map $\cgr\to\Sg$ and we have already observed in (\ref{string}) that the composite $\LL\to\cgr\to\Sg$ is a map of operads. Therefore it remains to be shown that the individual complexity indices $c_{ij}(x(x_1,\dots,x_k))$ of a composite lattice path $x(x_1,\dots,x_k)$ can be identified with the corresponding individual labelings of $c_{ij}(x)(c_{ij}(x_1),\dots,c_{ij}(x_k))$ in the complete graph operad. This follows again by direct inspection.\end{proof}

\subsection{Standard systems of simplices in monoidal model categories}\label{sss}In order to relate the \emph{$\delta$-condensation} of $\LL_m$ to \emph{$E_m$-operads}, we need to specify some conditions on $\EE$, $\delta$ and on their interplay with $\LL$. Although we give a general framework, our main interest are the concrete cases $\EE=\Top$, $\EE=\Set^{\Delta^\op}$ and $\EE=\Ch(\ZZ)$, with the standard cosimplicial objects $\delta_{top}$, $\delta_{yon}$, $\delta_\ZZ$ as defined at the end of Section \ref{latticepath}. In general, we shall assume that $\EE$ is a \emph{monoidal model category} in Hovey's sense \cite{Ho}, i.e. at once a closed symmetric monoidal category and a Quillen model category such that so-called unit and pushout-product axioms hold. This is the case for the Quillen model structure on $\Top$ resp. $\Set^{\Delta^\op}$, and for the projective model structure on $\Ch(\ZZ)$. Moreover, we shall assume that $\delta$ is a \emph{standard system of simplices} for $\EE$ in the sense of \cite[Definition A.6]{BM}. This means that\begin{itemize}\item[(i)]$\delta^\cdot$ is cofibrant for the Reedy model structure on $\EE^\Delta$;\item[(ii)]$\delta^0$ is the unit $I_\EE$ of $\EE$, and the simplicial operators $[m]\to[n]$ act as weak equivalences $\delta^m\to\delta^n$ in $\EE$;\item[(iii)]the realization functor $|\!-\!|_\delta=(-)\otimes_\Delta\delta^\cdot:\EE^{\Delta^\op}\to\EE$ is a symmetric monoidal functor whose structural maps $|X|_\delta\otimes_\EE|Y|_\delta\to|X\otimes_\EE Y|_{\delta}$ are weak equivalences for Reedy-cofibrant objects $X,Y$ of $\EE^{\Delta^\op}$.\end{itemize}The three standard cosimplicial objects $\delta_{top},\,\delta_{yon},\,\delta_\ZZ$ are such standard systems of simplices, cf. \cite[A.13 and A.16]{BM}. In general, a monoidal model category $\EE$ with a standard system of simplices $\delta$ admits functorial \emph{cosimplicial framings} $(-)\otimes_\EE\delta^\cdot$, and therefore a canonical way of constructing \emph{homotopy colimits}, cf. Hovey \cite{Ho}. These homotopy colimits are compatible with the tensor $\otimes_\EE$ in the same way as homotopy colimits of topological spaces (resp. simplicial sets) are compatible with cartesian product. Moreover, since the composite functor $\Set^{\Delta^\op}\to\EE^{\Delta^\op}\to\EE$ is a symmetric monoidal left Quillen functor \cite[A.13]{BM}, there is a well defined (i.e. homotopy invariant) concept of \emph{$E_m$-operad} in $\EE$, namely, any symmetric operad in $\EE$ that is weakly equivalent to the $\delta$-realization of a simplicial $E_m$-operad.

\subsection{$\delta$-realization and $\delta$-totalization}Recall from (\ref{condensation}) that the underlying collection of the $\delta$-condensation of $\LL$ is given by$$\Coend_\LL(\delta)(k)=\uHom_\Delta(\delta^\cdot,\LL(\overbrace{-,\cdots,-}^k;\cdot)\otimes_{\Delta\times\cdots\times\Delta}\overbrace{\delta^-\otimes_\EE\cdots\otimes_\EE\delta^-}^k),\,k\geq 0.$$It is thus obtained as the composite of a (multisimplicial) $\delta$-\emph{realization} followed by a (cosimplicial) $\delta$-\emph{totalization}. In order to simplify notation, we shall abbreviate $$\Coend_\LL(\delta)(k)=\uHom_\Delta(\delta^\cdot,|\LL(\overbrace{-,\cdots,-}^k;\cdot)|_{\delta^{\otimes k}})=\uHom_\Delta(\delta^\cdot,\xi_k(\delta)^\cdot).$$In particular, $\xi_k(\delta)^\cdot=|\LL(-,\cdots,-;\cdot)|_{\delta^{\otimes k}}$ denotes a cosimplicial object of $\EE$ which comes equipped with a canonical $\Sg_k$-action. The resulting \emph{cosimplicial collection} carries a $\Coend_\LL(\delta)$-\emph{right module} structure, i.e. there are cosimplicial maps$$\xi_k(\delta)^\cdot\otimes_\EE\Coend_\LL(\delta)(n_1)\otimes_\EE\cdots\otimes_\EE\Coend_\LL(\delta)(n_k)\to\xi_{n_1+\cdots+n_k}(\delta)^\cdot$$from which  the right action of $\Coend_\LL(\delta)$ on itself can be deduced through adjunction and $\delta$-totalization. The cosimplicial object $\xi_k(\delta)^\cdot$ is the colimit of a functor$$\cgr(k)\to\EE^\Delta:(\mu,\sg)\mapsto\xi_{(\mu,\sg)}(\delta)^\cdot$$where the cosimplicial object $\xi_{(\mu,\sg)}(\delta)^\cdot$ denotes the $\delta$-realization of those lattice paths $x\in\LL$ whose total complexity index fulfills $c_{tot}(x)\leq(\mu,\sg)$, cf. (\ref{totcom}), i.e.$$\xi_{(\mu,\sg)}(\delta)^\cdot=|\{x\in\LL(\overbrace{-,\cdots,-}^k;\cdot),\,c_{tot}(x)\leq(\mu,\sg)\}|_{\delta^{\otimes k}}.$$\vspace{1ex}

We introduce the following terminology: a \emph{weak equivalence} in $\EE$ is called \emph{universal} if any pullback of it is again a weak equivalence. For instance, any trivial fibration is a universal weak equivalence.  Any map $f:Y\to X$ admitting a section $i:X\to Y$ such that $if:Y\to Y$ is fiber homotopy equivalent to $id_Y$ is a universal weak equivalence. More generally, any map $f:Y\to X$, which is fiber homotopy equivalent to a trivial fibration $g:Y\to X$, is a universal weak equivalence.

\begin{dfn}\label{reductive}Let $\delta$ be a standard system of simplices in $\EE$. The lattice path operad $\LL$ is called \emph{$\delta$-reductive} if for each $n\geq 0$ and $k\geq 0$, the map $\xi_k(\delta)^n\to\xi_k(\delta)^0$ is a universal weak equivalence in $\EE$; $\LL$ is called \emph{strongly $\delta$-reductive} if in addition the induced map $\Coend_\LL(\delta)(k)\to\xi_k(\delta)^0$ is also a universal weak equivalence in $\EE$.\end{dfn}

One way to obtain strong $\delta$-reductivity is to show that the cosimplicial map $\xi(\delta)^\cdot\to\xi(\delta)^0$ is a trivial Reedy fibration with respect to the Reedy model structure on $\EE^\Delta$, where $\xi(\delta)^0$ is considered as a constant cosimplicial object. Indeed, a trivial Reedy fibration is objectwise a trivial fibration, which yields $\delta$-reductivity. Moreover, by (\ref{sss}i), $\delta$-totalization is a right Quillen functor $\EE^\Delta\to\EE$, which implies that the induced map $\Coend_\LL(\delta)(k)\to\xi_k(\delta)^0$ is a trivial fibration in $\EE$. We shall see in (\ref{mainexamples}) below that the lattice path operad is strongly $\delta_{top}$-reductive as well as strongly $\delta_\ZZ$-reductive; it is $\delta_{yon}$-reductive, but \emph{not} strongly $\delta_{yon}$-reductive.

\begin{thm}\label{main}Let $\delta$ be a standard system of simplices in a monoidal model category $\EE$. If the lattice path operad $\LL$ is strongly $\delta$-reductive, then $\delta$-condensation of the different filtration stages $\LL_m$ of $\LL$ yields $E_m$-operads $\Coend_{\LL_m}(\delta)$ in $\EE$.\end{thm}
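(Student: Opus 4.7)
The plan is to chain together three ingredients: the filtration-preserving operad map $c_{tot}:\LL\to\cgr$ from Proposition \ref{totcom}, the strong $\delta$-reductivity hypothesis, and the cellulation theorem for $E_m$-operads (Theorem \ref{Em}). Concretely, I would pass through the intermediate operad $B\cgr_m$ and show that $\Coend_{\LL_m}(\delta)$ is weakly equivalent to it as a symmetric operad in $\EE$.

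First I would use strong $\delta$-reductivity to reduce the problem from the condensed operad to its zeroth cosimplicial level. By hypothesis, the canonical map
\[
\Coend_{\LL_m}(\delta)(k)\lrto \xi_k(\LL_m)(\delta)^0
\]
is a universal weak equivalence for every $k\geq 0$; one checks (from the construction of $\Coend_{\LL_m}$ as $\delta$-totalization of a Reedy-fibrant cosimplicial object, using (\ref{sss}i)) that these maps assemble into a morphism of symmetric operads in $\EE$. It therefore suffices to prove that the operad $k\mapsto \xi_k(\LL_m)(\delta)^0$ is an $E_m$-operad.

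Next I would exhibit a $\cgr_m$-cellulation of this operad. By Proposition \ref{totcom} the map $c_{tot}$ is filtration-preserving, and the formula
\[
\xi_k(\LL_m)(\delta)^0 \;=\; \colim_{(\mu,\sg)\in\cgr_m(k)}\xi_{(\mu,\sg)}(\delta)^0
\]
expresses the zeroth level as a $\cgr_m(k)$-indexed colimit of subobjects (the cells). Equivariance and compatibility with operadic substitution are built into $c_{tot}$ being a morphism of operads. The two points to verify are: (a) each cell $\xi_{(\mu,\sg)}(\delta)^0$ is weakly equivalent to $I_\EE$, and (b) the maps $\colim_{(\mu',\sg')<(\mu,\sg)}\xi_{(\mu',\sg')}(\delta)^0\to \xi_{(\mu,\sg)}(\delta)^0$ are cofibrations in $\EE$. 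For (a), the subset of lattice paths of complexity index bounded by a fixed $(\mu,\sg)$ is itself reductive in the sense that its $\delta$-realization at every cosimplicial degree is weakly equivalent to degree $0$, and this follows from the $\delta$-reductivity hypothesis applied level-wise (combined with the cofibrancy axiom (\ref{sss}i) and the compatibility of $|\!-\!|_\delta$ with $\otimes$ in (\ref{sss}iii), which ensure universal weak equivalences of realizations). For (b), one uses the Reedy cofibrancy of $\delta$ and the explicit combinatorial description (\ref{string}) of the lattice-path cells as disjoint unions of representables.

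With these two properties in hand the standard homotopy colimit comparison of (\ref{cellulation}), transported to $\EE$ via the cosimplicial framing $(-)\otimes_\EE\delta^\cdot$ of (\ref{sss}), yields a zig-zag of operadic weak equivalences
\[
\xi_k(\LL_m)(\delta)^0 \;\overset{\sim}{\longleftarrow}\; \hocolim_{(\mu,\sg)\in\cgr_m(k)}\xi_{(\mu,\sg)}(\delta)^0 \;\overset{\sim}{\longrightarrow}\; B\cgr_m(k)\otimes I_\EE,
\]
the right arrow being induced by contracting each cell. Since $B\cgr_m$ is an $E_m$-operad by Theorem \ref{Em}, and the strong monoidal left Quillen functor $\Set^{\Delta^\op}\to\EE$ preserves the notion of $E_m$-operad as recorded in (\ref{sss}), this identifies $\Coend_{\LL_m}(\delta)$ with an $E_m$-operad in $\EE$. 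The main obstacle I anticipate is item (a): showing that the individual cells are contractible. The hypothesis only provides reductivity for the full operad $\LL$, so one must argue that restriction along $c_{tot}^{-1}(\{\leq(\mu,\sg)\})$ preserves the universal weak equivalences $\xi(\delta)^n\to\xi(\delta)^0$; this requires a Reedy-cofibrancy argument together with the fact that universal weak equivalences are stable under the filtered colimits used to build the cells.
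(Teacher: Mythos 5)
There are two genuine gaps in your proposal, and they occur precisely at the two places where the paper's argument has to work hardest.

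First, your opening reduction presumes that the collection $\{\xi_k(\delta)^0\}_{k\geq 0}$ is a symmetric operad in $\EE$ and that the maps $\Coend_{\LL_m}(\delta)(k)\to\xi_k(\delta)^0$ assemble into an operad morphism. Neither holds in general. Substitution in $\LL$ into the $i$-th input of an operation with output colour $0$ requires an operation with \emph{output colour} $n_i$, so the output-colour-$0$ part $\LL(-,\dots,-;0)$ is not closed under operadic composition; after realization, $\{\xi_k(\delta)^0\}_{k\geq 0}$ carries only a \emph{right module} structure over $\Coend_\LL(\delta)$, as the paper records explicitly. The examples (\ref{mainexamples}) show this is not a pedantic point: for $\delta_{top}$ the operad structure on $\Coend_\LL(\delta_{top})(k)\cong\xi_k(\delta_{top})^0\times\uHom_\Delta(\delta_{top},\delta_{top})$ involves the second factor in an essential way, and for $\delta_\ZZ$ endowing $\xi(\delta_\ZZ)^0=\XX$ with an operad structure requires constructing by hand extra cosimplicial chain maps $\xi_k(\delta_\ZZ)^0\otimes N_*(\Delta[n];\ZZ)\to\xi_k(\delta_\ZZ)^n$. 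The paper avoids the issue entirely: instead of cellulating level $0$, it forms the coloured operad $\widehat{\LL}_m$ in $\EE$ with $\widehat{\LL}_m(n_1,\dots,n_k;n)=\hocolim_{(\mu,\sg)\in\cgr_m(k)}\LL_{(\mu,\sg)}(n_1,\dots,n_k;n)$ (a genuine coloured operad because $c_{tot}$ of Proposition \ref{totcom} is an operad map and homotopy colimits are compatible with $\otimes_\EE$ by (\ref{sss})), and condenses it to obtain a zigzag of honest operad maps $\Coend_{\LL_m}(\delta)\leftarrow\Coend_{\widehat{\LL}_m}(\delta)\rightarrow B_\delta\cgr_m=\Coend_{\widehat{\LL}_m}(I)$. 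Strong $\delta$-reductivity and level-$0$ comparisons are then used only to verify that these operad maps are weak equivalences of underlying collections; no operad structure on level $0$ is ever invoked.

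Second, your item (a) --- weak contractibility of the cells $\xi_{(\mu,\sg)}(\delta)^0$ --- cannot follow from $\delta$-reductivity, level-wise or otherwise. Reductivity, correctly transferred to each cell via the pullback squares (the mechanism is pullback stability, i.e. the very definition of ``universal'' weak equivalence, exploiting that the complexity index is independent of the cosimplicial degree --- not stability under filtered colimits, as you suggest), only yields weak equivalences $\xi_{(\mu,\sg)}(\delta)^n\to\xi_{(\mu,\sg)}(\delta)^0$ in the \emph{cosimplicial} direction; it says nothing about the homotopy type of $\xi_{(\mu,\sg)}(\delta)^0$ itself. A sanity check: reductivity holds for the whole of $\LL$, yet $\colim_{(\mu,\sg)\in\cgr_m(k)}\xi_{(\mu,\sg)}(\delta)^0$ has the homotopy type of an $E_m$-operad component and is certainly not contractible for finite $m$; contractibility is therefore a genuinely cell-by-cell geometric fact, not a formal consequence of the hypotheses of the theorem. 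The paper isolates it as Lemma \ref{Barratt-Eccles} and imports it from McClure--Smith (\cite[Lemma 14.8]{MS3}, contractibility of $\xi_{(\mu,\sg)}(\delta_{top})^0$), transporting it to an arbitrary $\EE$ via $\delta_{yon}$ and axioms (\ref{sss}ii)--(iii). Without this external input, your cellulation argument has no way to identify the (homotopy) colimit of cells with $B\cgr_m(k)$, so the proof does not close.
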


\begin{proof}We follow closely McClure-Smith's proof of \cite[Lemma 14.7]{MS3} which deals with the special case $\delta=\delta_{top}$. It will be sufficient to establish a zigzag of weak equivalences of operads in $\EE$:\begin{diagram}[small]\Coend_{\LL_m}(\delta)&\lTo^\sim&\Coend_{\widehat{\LL}_m}(\delta)&\rTo^\sim B_\delta\cgr_m\end{diagram}where $B_\delta:\Cat\lrto\Set^{\Delta^\op}\overset{|\!-\!|_\delta}{\lrto}\EE$ is the nerve functor followed by $\delta$-realization. Indeed, since the nerve of $\cgr_m$ is a simplicial $E_m$-operad by (\ref{Em}), and since $\delta$-realization is symmetric monoidal by (\ref{sss}iii), the classifying object $B_\delta\cgr_m$ is an $E_m$-operad so that the zigzag above shows that $\Coend_{\LL_m}(\delta)$ is an $E_m$-operad too. The intermediate operad $\Coend_{\widehat{\LL}_m}(\delta)$ is defined using the homotopy colimit idea of (\ref{cellulation}). More precisely, Proposition \ref{totcom} implies that the lattice path operad $\LL$ is a colimit of coloured subcollections $\LL_{(\mu,\sg)}$. Define$$\widehat{\LL}_m(n_1,\dots,n_k;n)=\hocolim_{(\mu,\sg)\in\cgr_m(k)}\LL_{(\mu,\sg)}(n_1,\dots,n_k;n),$$the homotopy colimit being taken inside $\EE$, cf. Remark \ref{discrete}. Since $c_{tot}:\LL\to\cgr$ is a map of coloured operads, and since by (\ref{sss}iii) homotopy colimits are compatible with the symmetric monoidal structure of $\EE$, the coloured collection $\widehat{\LL}_m$ is actually an $\NN$-coloured operad in $\EE$ with $\widehat{\LL}_u=\LL_u=\Delta$. Moreover, the corresponding cosimplicial object $\widehat{\xi}_k(\delta)^\cdot$ may be identified with $\hocolim_{(\mu,\sg)\in\cgr_m(k)}\xi_{(\mu,\sg)}(\delta)^\cdot$.

The left arrow in the zigzag above is induced by the canonical map of coloured operads $\widehat{\LL}_m\to\LL_m$. The right arrow in the zigzag above uses the identification $B_\delta\cgr_m=\Coend_{\widehat{\LL}_m}(I)$ where $I$ is the constant cosimplicial object with $I^n=I_\EE=\delta^0$ for each $n\geq 0$. The right arrow is thus induced by the cosimplicial map $\delta^\cdot\to I^\cdot$.

Since the complexity indices are independent of the cosimplicial structure, we get for each $n\geq 0$ and each $(\mu,\sg)\in\cgr(k)$ a pullback square\begin{diagram}[small]\xi_{(\mu,\sg)}(\delta)^n&\rTo&\xi_k(\delta)^n\\\dTo&&\dTo\\\xi_{(\mu,\sg)}(\delta)^0&\rTo&\xi_k(\delta)^0\end{diagram}in which the vertical maps are universal weak equivalences by $\delta$-reductivity of $\LL$. Since $\LL$ is \emph{strongly} $\delta$-reductive, the operad map $\Coend_{\widehat{\LL}_m}(\delta)\to\Coend_{\LL_m}(\delta)$ will be a weak equivalence as soon as the canonical map$$\hocolim_{\cgr_m(k)}\xi_{(\mu,\sg)}(\delta)^0\to\colim_{\cgr_m(k)}\xi_{(\mu,\sg)}(\delta)^0$$ is a weak equivalence for each $k\geq 0$. The latter follows from the fact that the functor $\cgr_m(k)\to\EE:(\mu,\sg)\mapsto\xi_{(\mu,\sg)}(\delta)^0$ is Reedy-cofibrant in virtue of (\ref{sss}i) (which implies that $|\!-\!|_\delta:\Set^{\Delta^\op}\to\EE$ preserves cofibrations, cf. \cite[A.11]{BM}). Similarly, in order to show that $\Coend_{\widehat{\LL}_m}(\delta)\to\Coend_{\widehat{\LL}_m}(I)$ is a weak equivalence, it is sufficient to show that the cosimplicial map $\delta^\cdot\to I^\cdot$ induces a weak equivalence$$\hocolim_{\cgr_m(k)}\xi_{(\mu,\sg)}(\delta)^n\to\hocolim_{\cgr_m(k)}\xi_{(\mu,\sg)}(I)^n$$for each $k\geq 0$ and $n\geq 0$. Since $\xi_{(\mu,\sg)}(I)^n=I_\EE$, this amounts to showing that $\delta^n\to\delta^0$ induces a weak equivalence $\xi_{(\mu,\sg)}(\delta)^n\to I_\EE$ for each $n\geq 0$ and $(\mu,\sg)\in\cgr_m(k)$. This in turn follows from the pullback square above, from (\ref{sss}ii) (which implies that $|\!-\!|_\delta:\Set^{\Delta^\op}\to\EE$ preserves weak equivalences, cf. \cite[A.13]{BM}) and from Lemma \ref{Barratt-Eccles}, since the cell $\xi_{(\mu,\sg)}(\delta)^0$ is weakly equivalent to $|\xi_{(\mu,\sg)}(\delta_{yon})^0|_\delta$ by (\ref{sss}iii).\end{proof}

\begin{lem}\label{Barratt-Eccles}The cells $\xi_{(\mu,\sg)}(\delta_{yon})^0$ of the simplicial set $\xi_k(\delta_{yon})^0$ are weakly contractible for all $(\mu,\sg)\in\cgr(k)$ and all $k\geq 0$.\end{lem}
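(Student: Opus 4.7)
The plan is to pass to topological realization and construct a contraction by combining a straight-line deformation retract with an inductive argument on the complexity filtration.

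By property (\ref{sss}iii) applied to the standard system $\delta_{top}$ on $\Top$, the realization $|\xi_{(\mu,\sg)}(\delta_{yon})^0|_{\delta_{top}}$ is weakly equivalent to $\xi_{(\mu,\sg)}(\delta_{top})^0$. Since topological realization reflects weak homotopy equivalences on simplicial sets, it suffices to show the topological cell $\xi_{(\mu,\sg)}(\delta_{top})^0$ is contractible. A point of this cell is an equivalence class $[x;t_1,\ldots,t_k]$ with $x \in \LL_{(\mu,\sg)}(n_1,\ldots,n_k;0)$ and $t_i \in \Delta^{n_i}_{top}$, modulo the coend relations from input-functoriality of $\LL(-,\ldots,-;0)$.

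The first step is the straight-line homotopy $H_s([x;t_1,\ldots,t_k])=[x;(1-s)t_1,\ldots,(1-s)t_k]$, sliding each $t_i$ linearly to the initial vertex of $\Delta^{n_i}_{top}$. This is well-defined on the coend because the simplicial face operators are affine, hence commute with convex combinations; the complexity filter is preserved automatically since $x$ is unchanged. At $s=1$, the cell deformation retracts onto the discrete stratum $\LL_{(\mu,\sg)}(0,\ldots,0;0)\subseteq\Sg_k$; when all $\mu_{ij}=0$ this stratum is the single point $\{\sg\}$, finishing the argument. Otherwise, distinct permutations in the stratum are nonetheless connected within the original cell via higher-dimensional pieces: a lattice path $z$ with two distinct vertex restrictions landing in two different permutations provides an interval joining them.

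The second step is to inductively collapse these stratum points via the connecting pieces. Proceeding by induction on the partial order of $\cgr(k)$, the ``top stratum'' of lattice paths with $c_{tot}(x)=(\mu,\sg)$ exactly attaches to the subspace $\bigcup_{(\mu',\sg')<(\mu,\sg)}\xi_{(\mu',\sg')}(\delta_{top})^0$ along a cofibration; a mapping-cylinder argument then exhibits $\xi_{(\mu,\sg)}(\delta_{top})^0$ as deformation retracting onto a chosen lower subcell, which is contractible by induction, the base case $(\underline{0},\sg)$ being the single point guaranteed by Step 1. The main obstacle is making this attaching description precise: one must explicitly describe the top stratum as a product of simplices indexed by the blocks of integer-strings of exact complexity $(\mu,\sg)$, and verify that the coend identifications along its boundary produce the desired collar into the lower cells. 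An alternative, more direct approach is to identify $\xi_{(\mu,\sg)}(\delta_{top})^0$ outright as a star-shaped subspace of a product of simplices (yielding contractibility for free), analogous to the topological cells of the little cubes cellulation from \cite{MS3}, but such a star-shaped description requires nontrivial convex-geometric bookkeeping when $(\mu,\sg)$ has large entries.
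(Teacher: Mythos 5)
Your opening reduction---realize with $\delta_{top}$, note $|\xi_{(\mu,\sg)}(\delta_{yon})^0|_{\delta_{top}}\cong\xi_{(\mu,\sg)}(\delta_{top})^0$, and reduce to contractibility of the topological cell---is exactly the paper's proof; at that point the paper simply cites \cite[Lemma 14.8]{MS3}, which asserts precisely that $\xi_{(\mu,\sg)}(\delta_{top})^0$ is contractible. The problem is that your attempt to re-prove this contractibility fails at Step 1: the straight-line homotopy $H_s[x;t_1,\dots,t_k]=[x;(1-s)t_1+sv_0,\dots,(1-s)t_k+sv_0]$ (sliding toward the initial vertices $v_0$) is \emph{not} well defined on the coend. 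The coend identifies $[x\cdot(\phi_1,\dots,\phi_k);t_1,\dots,t_k]$ with $[x;\phi_{1*}t_1,\dots,\phi_{k*}t_k]$ for \emph{all} simplicial operators $\phi_i$; each $\phi_{i*}$ is indeed affine and so commutes with convex combinations, but it sends the initial vertex of its source to the vertex $v_{\phi_i(0)}$ of its target, and $\phi_i(0)\neq 0$ in general (already for $\phi_i=d^0$). Hence $H_s[x\cdot\phi;t]=[x;(1-s)\phi_*t+sv_{\phi(0)}]$ and $H_s[x;\phi_*t]=[x;(1-s)\phi_*t+sv_0]$ are different points whenever $x$ is nondegenerate. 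A completely concrete failure occurs already for $k=2$ in the cell $(\mu_{12},\sg_{12})=(1,e)$: its nondegenerate lattice paths are $12$, $21$ and $121\in\LL(1,0;0)$, so the cell is an arc $\{[121;t,*]:t\in\Delta_1\}$ whose endpoints are $[121;v_0,*]=[12;*,*]$ and $[121;v_1,*]=[21;*,*]$. Your homotopy fixes the point $[21;*,*]$ (there is nothing to slide in bidegree $(0,0)$), yet moves the \emph{same} point, written as $[121;v_1,*]$, into the interior of the arc toward $[12]$. Since the formula makes no reference to the combinatorics of $\LL_{(\mu,\sg)}$, this is unavoidable: were such a homotopy well defined, it would equally contract the $\delta_{top}$-realization of any simplicial set onto its $0$-skeleton, e.g.\ $|\Delta[1]/\partial\Delta[1]|\cong S^1$, which is absurd. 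So Step 1 provides no retraction onto the discrete stratum.

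Step 2 does not repair this and is independently unsound as sketched. The subspace $\bigcup_{(\mu',\sg')<(\mu,\sg)}\xi_{(\mu',\sg')}(\delta_{top})^0$ is the boundary $\partial c_{(\mu,\sg)}$ in the sense of (\ref{cellulation}), and such boundaries are typically \emph{not} contractible: in the example above the boundary of the arc is $\{[12]\}\sqcup\{[21]\}\cong S^0$, and it is exactly this non-contractibility of boundaries that lets $B\cgr_m(k)$ have the (non-trivial) homotopy type of a configuration space, cf.\ Theorem \ref{Em}. Your induction only yields contractibility of each individual lower cell, which gives no control over the homotopy type of their union; and the claimed ``mapping-cylinder'' deformation retraction of $\xi_{(\mu,\sg)}(\delta_{top})^0$ onto a single chosen lower subcell is not a consequence of attaching along a cofibration---it is essentially equivalent to the contractibility you are trying to prove. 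You acknowledge that the precise attaching description (or the alternative star-shaped description) is missing, but that description \emph{is} the content of the lemma: what remains unproved in your proposal is exactly the statement of \cite[Lemma 14.8]{MS3}, which the paper's one-line proof invokes.
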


\begin{proof}Since $|\xi_{(\mu,\sg)}(\delta_{yon})^0|_{\delta_{top}}\cong\xi_{(\mu,\sg)}(\delta_{top})^0$, this follows from \cite[Lemma 14.8]{MS3}, where it is shown that $\xi_{(\mu,\sg)}(\delta_{top})^0$ is contractible.\end{proof}

\begin{exms}\label{mainexamples}We shall briefly discuss the three main examples $\delta_{top},\delta_{yon},\delta_{\ZZ}$.\vspace{1ex}

(a) \emph{The topological condensation of $\LL$}.\vspace{1ex}

The lattice path operad $\LL$ is \emph{$\delta_{top}$-reductive} by \cite[Proposition 12.7]{MS3} where it is shown that there exists a canonical homeomorphism $\xi_k(\delta_{top})^\cdot\cong\xi_k(\delta_{top})^0\times\delta_{top}^\cdot$ compatible with the projections to $\xi_k(\delta_{top})^0$. It follows that $\xi_k(\delta_{top})^n\to\xi_k(\delta_{top})^0$ is a universal weak equivalence for each $n\geq 0$ and $k\geq 0$.  Moreover, there is an induced homeomorphism $\Coend_\LL(\delta_{top})(k)\cong\xi_k(\delta_{top})^0\times\uHom_\Delta(\delta_{top},\delta_{top})$, which implies that $\LL$ is \emph{strongly $\delta_{top}$-reductive}. Theorem \ref{main} thus recovers \cite[Theorem 9.1]{MS3} that $\Coend_{\LL_m}(\delta_{top})$ is a topological $E_m$-operad, cf. Remark \ref{comparison}.

The contractible factor $\uHom_\Delta(\delta_{top},\delta_{top})$ enters in an essential way in the operad structure of $\Coend_\LL(\delta_{top})$. Salvatore observes in \cite{Sa} that $\uHom_\Delta(\delta_{top},\delta_{top})$ is homeomorphic to the space of \emph{order- and endpoint-preserving continous self-maps} of the unit-interval. The appendix of \cite{Sa} makes explicit the isomorphism between $\Coend_{\LL_2}(\delta_{top})$ and McClure-Smith's \cite{MS3} original construction, as outlined in Remark \ref{comparison} above. By the aforementioned splitting, suitable contractible submonoids of $\uHom_\Delta(\delta_{top},\delta_{top})$ yield $E_2$-suboperads of $\Coend_{\LL_2}(\delta_{top})$. In particular, a variant of Kaufmann's \emph{spineless cacti operad} can be obtained in this way, cf. \cite{Ka1,Sa,V2}.\vspace{1ex}

(b) \emph{The simplicial condensation of $\LL$.}\vspace{1ex}

Since $|\xi_k(\delta_{yon})^\cdot|_{\delta_{top}}\cong\xi_k(\delta_{top})^\cdot$ and since $\delta_{top}$-realization preserves pullbacks and reflects weak equivalences, $\delta_{top}$-reductivity implies $\delta_{yon}$-reductivity of $\LL$.  However, $\LL$ is \emph{not} strongly $\delta_{yon}$-reductive so that Theorem \ref{main} does \emph{not} apply in the simplicial setting. This can be seen by observing that $\Coend_{\LL_1}(\delta_{yon})$ is the initial simplicial operad (instead of being a simplicial $E_1$-operad). Indeed, the $\delta_{yon}$-realization of $\Delta([-]*\cdots*[-],[n])$ is isomorphic to $k$-fold iterated \emph{edgewise subdivision} of $\Delta[n]$, and $\delta_{yon}$-totalization of the latter is empty for $k\geq 2$. Despite of this negative result, there is an interesting map of simplicial operads from $\Coend_\LL(\delta_{yon})$ to Barratt-Eccles' $E_\infty$-operad $E\Sg=(E\Sg_k)_{k\geq 0}$, constructed as follows.

The simplicial set $\xi_k(\delta_{yon})^n$ is the simplicial realization of the $k$-simplicial set $\LL(-,\cdots,-;n)$. The simplicial realization of a multisimplicial set is isomorphic to the diagonal of the multisimplicial set. Therefore, the $d$-simplices of $\xi_k(\delta_{yon})^n$ may be identified with the elements of $\LL(d,\dots,d;n)$. According to (\ref{string}), such an element $x\in\LL(d,\dots,d;n)$ is represented by an $n$-fold subdivided integer-string containing $d+1$ times each integer $1,2,3,\dots,k$; the simplicial face (resp. degeneracy) operators act by omitting (resp. duplicating) the relevant integers.  Now, define a simplicial map $\xi_k(\delta_{yon})^0\to E\Sg_k$ by sending an integer-string $x$ containing $d+1$ times each integer $1,2,\dots,k$, to the unique $(d+1)$-tuple of permutations $(\sg_0,\dots,\sg_d)\in (E\Sg_k)_d$ with the property that the string $\sg_0(1)\sg_0(2)\cdots\sg_0(k)$ is the subsequence of $x$ consisting of the first occurences of each integer, $\sg_1(1)\sg_1(2)\cdots\sg_1(k)$ the subsequence of $x$ consisting of the second occurences of each integer, and so on. This simplicial map splits, i.e. $\xi_k(\delta_{yon})^0\cong E\Sg_k\times V_k\to E\Sg_k$. The fiber $V_k$ has as $d$-simplices all integer-strings containing $d+1$ times each integer $1,2,\dots,k$ in such a way that in each left-sided substring the number of 1's excedes the number of 2's which excedes the number of 3's, and so on (i.e., the associated lattice path never crosses the diagonal). The subcollection $V_k$ of $\xi_k(\delta_{yon})^0$ extends in an obvious way to a cosimplicial subcollection $V_k^\cdot$ of $\xi_k(\delta_{yon})^\cdot$. The cosimplicial collection $V^\cdot_k$ comes actually from a non-symmetric coloured suboperad of $\LL$. Therefore, totalization of $V^\cdot_k$ yields a \emph{non-symmetric} simplicial operad, and the splitting above induces a global decomposition $\Coend_\LL(\delta_{yon})\cong E\Sg\times \uHom_\Delta(\delta^\cdot_{yon},V^\cdot)$ compatible with the operad structures. We conjecture the second factor to be a simplicial $A_\infty$-operad, so that projection on the first factor defines a weak equivalence between $\Coend_\LL(\delta_{yon})$ and Barratt-Eccles' $E_\infty$-operad $E\Sg$. If the conjecture holds, the Smith-filtration of $E\Sg$ (cf. \cite{Be2}) would induce a filtration of $\Coend_\LL(\delta_{yon})$ by simplicial $E_m$-suboperads which is different from the complexity filtration, cf. \cite[Remark 8.2]{MS3}.\vspace{1ex}

(c) \emph{The chain complex condensation of $\LL$.}\vspace{1ex}

Recall that the standard cosimplicial chain complex $\delta_\ZZ:\Delta\to\Ch(\ZZ)$ is defined by composing the Yoneda-embedding $\delta_{yon}:\Delta\to\Set^{\Delta^\op}$ with the \emph{normalized} chain functor $N_*(-;\ZZ):\Set^{\Delta^\op}\to\Ch(\ZZ)$. The normalization is necessary to ensure that $\delta_\ZZ^0$ is the unit of $\Ch(\ZZ)$, cf. (\ref{sss}ii). It is readily verified that, for each $n\geq 0$ and $k\geq 0$, the chain map $\xi_k(\delta_\ZZ)^n\to\xi_k(\delta_\ZZ)^0$ is surjective and hence a fibration for the projective model structure on $\Ch(\ZZ)$. A closer look at the cosimplicial chain complex $\xi_k(\delta_\ZZ)^\cdot$ reveals that, for each $n\geq 1$, the matching map $\xi_k(\delta_\ZZ)^n\to M_n(\xi_k(\delta_\ZZ)^\cdot)$ is surjective, i.e. a fibration in $\Ch(\ZZ)$. This implies that the cosimplicial chain map $\xi_k(\delta_\ZZ)^\cdot\to\xi_k(\delta_\ZZ)^0$ is a Reedy fibration in $\Ch(\ZZ)^\Delta$. Moreover, it follows from (\ref{sss}iii) and (\ref{mainexamples}b) that $\xi_k(\delta_\ZZ)^n\to\xi_k(\delta_\ZZ)^0$ is a weak equivalence, i.e. $\xi_k(\delta_\ZZ)^\cdot\to\xi_k(\delta_\ZZ)^0$ is a trivial Reedy fibration in $\Ch(\ZZ)^\Delta$. Therefore, $\LL$ is \emph{strongly $\delta_\ZZ$-reductive} and Theorem \ref{main} implies that the $E_\infty$-chain operad $\Coend_\LL(\delta_\ZZ)$ is filtered by \emph{$E_m$-chain operads} $\Coend_{\LL_m}(\delta_\ZZ)$. This filtration induces the standard filtration of the \emph{surjection operad} $\XX$ of \cite{BF,MS2} by means of the following embedding of $\XX$ into $\Coend_\LL(\delta_\ZZ)$.

Observe first that the chain complex $\xi_k(\delta_\ZZ)^0$ is isomorphic to $\XX(k)$ since $\delta_\ZZ$-realization of the multisimplicial set $\LL(-,\dots,-;0)$ yields exactly one generator $N_*(\Delta[n_1];\ZZ)\otimes\cdots\otimes N_*(\Delta[n_k];\ZZ)$ of dimension $n_1+\cdots+n_k$ for each integer-string containing $n_1+1$ times $1$, $\dots$, $n_k+1$ times $k$, \emph{without repetitions}. The induced ``prismatic'' face operators agree (up to a sign) with the one defined in \cite{BF,MS2}. The operad structure on $\XX$ as well as the embedding of $\XX$ into $\Coend_\LL(\delta_\ZZ)$ follow now from the existence of cosimplicial chain maps $\xi_k(\delta_\ZZ)^0\otimes N_*(\Delta[n];\ZZ)\to\xi_k(\delta_\ZZ)^n$ compatible with the right $\Coend_\LL(\delta_\ZZ)$-module structure of $\xi_k(\delta_\ZZ)^\cdot$. The image of a generator $x\in\xi_k(\delta_\ZZ)^0$ under this chain map is obtained by decomposing the integer-string representing $x$ in all possible ways into $n+1$ \emph{overlapping} substrings, putting $n$ separating vertical bars between these substrings, and taking the sum over all these subdivided integer-strings. For instance, for $n=1$, the generator $121\in\xi_2(\delta_\ZZ)^0$ is taken to $1|121+12|21+121|1\in\xi_2(\delta_\ZZ)^1$. It is tedious but straightforward to verify the cosimplicial identities and the compatibility with the operad structure of $\LL$. The resulting operad structure on $\XX=\xi(\delta_\ZZ)^0$ coincides (up to a sign) with the one constructed in \cite{BF,MS2}. The $m$-th filtration stage $\XX_m$ of $\XX$ is induced by the $m$-th filtration stage $\Coend_{\LL_m}(\delta_\ZZ)$ of $\Coend_\LL(\delta_\ZZ)$ since both are defined by complexity. In particular, we get a weak equivalence of $E_m$-chain operads $\XX_m\to\Coend_{\LL_m}(\delta_\ZZ)$ admitting a canonical retraction in the category of collections, cf. also \cite{BBM}.\end{exms}

\subsection{Conormalization}\label{conorm}The Dold-Kan correspondence establishes an equivalence of categories between simplicial abelian groups and chain complexes. In this correspondence, the chain complex associated to a simplicial abelian group $X_\cdot$ is \emph{normalized} and hence isomorphic to the coend $X_\cdot\otimes_\Delta\delta^\cdot_\ZZ$. Dually, a cosimplicial abelian group $X^\cdot$ has an associated normalized cochain complex. This normalized cochain complex is isomorphic to the end $\uHom_\Delta(\delta_\ZZ^\cdot, X^\cdot)$ provided the bifunctor $\uHom_\Delta$ is understood to take values in differential $\ZZ$-graded abelian groups. In other words: conormalization can be viewed as $\delta_\ZZ$-totalization.\vspace{1ex}

The following statement is Deligne's conjecture for Hochschild cochains; for alternative proofs see \cite{BF,Ka1,KS1,MS1,MS2,Ta,V}.

\begin{thm}\label{Deligne}The normalized Hochschild cochain complex of a unital associative algebra carries a canonical action by an $E_2$-chain operad.\end{thm}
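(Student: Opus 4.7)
The plan is to assemble the four ingredients highlighted in the Introduction into a single chain of constructions, with essentially all of the technical work having already been done in the preceding sections.

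First I would form the endomorphism operad $\End_A$ of the underlying $\ZZ$-module of $A$: this is the non-symmetric operad in $\Ch(\ZZ)$ (viewing $\Mod_\ZZ$ as chain complexes concentrated in degree $0$) with $\End_A(n) = \Hom_\ZZ(A^{\otimes n}, A)$. The multiplication $m\in\End_A(2)$ and unit $u\in\End_A(0)$ of $A$, together with associativity and unitality, determine a morphism of non-symmetric operads $\Ass\to\End_A$. In other words, $\End_A$ is a \emph{multiplicative non-symmetric operad} in $\Ch(\ZZ)$, in the sense of Gerstenhaber--Voronov.

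Next, I invoke Proposition \ref{L2}: the category of $\LL_2$-algebras in $\Ch(\ZZ)$ is canonically isomorphic to that of multiplicative non-symmetric operads in $\Ch(\ZZ)$. This converts $\End_A$ into an $\LL_2$-algebra, with the underlying cosimplicial structure (dictated by the unary part $\LL_u=\Delta$ via Joyal-duality \ref{Joyal}) being the standard Hochschild cosimplicial structure: cofaces use pre- and post-composition with $m$ as well as insertion of the external variables, and codegeneracies use insertion of $u$. Now apply the condensation/totalization formalism of \ref{condensation}: by Proposition \ref{coend2}, the $\delta_\ZZ$-totalization
\[
\uHom_\Delta(\delta_\ZZ^\cdot,\End_A^\cdot)
\]
carries a canonical action of the $\delta_\ZZ$-condensation $\Coend_{\LL_2}(\delta_\ZZ)$. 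By \ref{conorm}, $\delta_\ZZ$-totalization of a cosimplicial abelian group is precisely its conormalization, so the underlying complex is exactly the normalized Hochschild cochain complex $\HCC^*(A;A)$.

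Finally, by Example \ref{mainexamples}(c)---a direct application of Theorem \ref{main} once strong $\delta_\ZZ$-reductivity of $\LL$ is in hand---$\Coend_{\LL_2}(\delta_\ZZ)$ is an $E_2$-chain operad. Combining the two preceding points yields the claimed $E_2$-action on $\HCC^*(A;A)$. Since every step in this chain is functorial in $A$, the action is canonical.

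The potential obstacle is entirely packaged into earlier results rather than this final theorem: the nontrivial work is the identification in Proposition \ref{L2} of $\LL_2$-algebras with multiplicative non-symmetric operads, and the verification (Example \ref{mainexamples}(c)) that $\LL$ is strongly $\delta_\ZZ$-reductive so that Theorem \ref{main} produces an $E_2$-chain operad after condensation. Given these, the proof of Theorem \ref{Deligne} reduces to the one-line concatenation above, which is what makes the present approach conceptually transparent compared to the constructions of \cite{BF,Ka1,KS1,MS1,MS2,Ta,V}.
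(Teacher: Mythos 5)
Your proposal is correct and follows essentially the same route as the paper's own proof: identify $\End_A$ as a multiplicative non-symmetric operad, apply Proposition \ref{L2} to make it an $\LL_2$-algebra, recognize $\delta_\ZZ$-totalization as conormalization (\ref{conorm}) so that the totalization is $\HCC^*(A;A)$, and invoke Theorem \ref{main} together with the strong $\delta_\ZZ$-reductivity established in (\ref{mainexamples}c) to conclude that $\Coend_{\LL_2}(\delta_\ZZ)$ is an $E_2$-chain operad acting via condensation. The only cosmetic difference is that the paper phrases $\End_A$ as an operad in abelian groups rather than in chain complexes concentrated in degree zero, which changes nothing.
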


\begin{proof}The endomorphism operad $\End_A$ of an algebra $A$ is a multiplicative non-symmetric operad in abelian groups, and is therefore by Proposition \ref{L2} an $\LL_2$-algebra in abelian groups. The unary part of $\LL_2$ endow $\End_A$ with the structure of a cosimplicial abelian group. It follows from \cite{GV} that conormalization of this cosimplicial abelian group yields the \emph{normalized Hochschild cochain complex} $\HCC^*(A;A)$. On the other hand, conormalization of a cosimplicial abelian group can be obtained as $\delta_\ZZ$-totalization (\ref{conorm}). Therefore, we have $\HCC^*(A;A)=\uHom_\Delta(\delta_\ZZ,\End_A).$ Theorem \ref{main} induces then through condensation (\ref{condensation}) a canonical action on $\HCC^*(A;A)$ by the $E_2$-chain operad $\Coend_{\LL_2}(\delta_\ZZ)$. (This action restricts to the second filtration stage $\XX_2$ of the surjection operad $\XX$, cf. (\ref{mainexamples}c) and \cite{BF,MS2}.)\end{proof}

\section{\label{cyclic}The cyclic lattice path operad}

The dual description of $\LL$ obtained in (\ref{duallattice}) suggests the definition of a cyclic version $\LL^{cyc}$ of the lattice path operad. Recall from Krasauskas \cite{Kr} and Fiedorowicz-Loday \cite{FL} that a \emph{crossed simplicial group} is a category  $\Delta G$ such that\begin{itemize}\item[(i)]$\Delta G$ contains $\Delta$ as a wide\footnote{i.e. $\Delta$ and $\Delta G$ have same objects.} subcategory;\item[(ii)]each morphism $\psi\in\Delta G([m],[n])$ factors uniquely as an automorphism $g\in\Aut_{\Delta G}([m],[m])$ followed by a morphism $\phi\in\Delta([m],[n])$.\end{itemize}In particular, the family of groups $G_{[n]}=\Aut_{\Delta G}([n],[n])$ inherits the structure of a \emph{simplicial set} by means of the following commutative squares in $\Delta G$:\begin{diagram}[small][m]&\rTo^\phi&[n]\\\dTo^{\phi^*(g)}&&\dTo_g\\[m]&\rTo_{g_*(\phi)}&[n],\end{diagram}in which the horizontal morphisms belong to $\Delta$ and the vertical morphisms are automorphisms. It follows from the definition (\ref{noncomm}) that the category $\Delta\Sg$ has the structure of a crossed simplicial group with $\Sg_{[n]}=\Aut_{\Delta\Sg}([n],[n])=\Sg_{n+1}$. In between $\Delta$ and $\Delta\Sg$ sits another crossed simplicial group $\Delta C$ whose automorphism group $C_{[n]}=\Aut_{\Delta C}([n],[n])=\ZZ/(n+1)\ZZ$ is the subgroup of $\Sg_{n+1}$ generated by the cycle $0\mapsto 1\mapsto\cdots\mapsto n\mapsto 0$. Indeed, for any $g\in C_{[n]}$ and $\phi\in\Delta([m],[n])$, the induced automorphism $\phi^*(g)$ in $\Delta\Sg$ belongs to $C_{[m]}$. As a simplicial set, $(C_{[n]})_{n\geq 0}$ is isomorphic to the simplicial circle $S^1=\Delta[1]/\partial\Delta[1]$. As a category, $\Delta C$ is isomorphic to Connes' \emph{cyclic category} $\Lambda$, cf. \cite{Co,FT,FL,Kr}.

\begin{dfn}The \emph{cyclic lattice path operad} $\LL^{cyc}$ is the $\NN$-coloured operad in sets defined by$$\LL^{cyc}(n_1,\dots,n_k;n)=\{x\in\Delta\Sigma_+([n_1]*\cdots*[n_k],[n])\,|\,x_i\in\Delta C([n_i],[n]),i=1,\dots,k\},$$where the operad substitution maps are given by join and composition in $\Delta\Sigma_+$.\end{dfn}

The underlying category $\LL_u^{cyc}$ is the cyclic category $\Delta C$. Moreover, the inclusion $\Delta\inc\Delta C$ induces an inclusion of $\NN$-coloured operads $\LL\inc\LL^{cyc}$. As in (\ref{pullback}) the cyclic lattice path operad may be identified with the pullback\begin{diagram}[small]\LL^{cyc}&\rTo&\EE_{\Delta\Sg_+}\\\dTo&&\dTo\\\FF_{\Delta C}&\rTo&\FF_{\Delta\Sigma}\end{diagram} in the category of coloured operads. The elements of $\LL^{cyc}(n_1,\dots,n_k;n)$ can be described as follows. We have a bijection$$\LL(n_1,\dots,n_k;n)\times C_{[n_1]}\times\cdots\times C_{[n_k]}\cong\LL^{cyc}(n_1,\dots,n_k;n)$$obtained by join and composition. Therefore, any element of $\LL^{cyc}(n_1,\dots,n_k;n)$ determines, and is determined by, an $n$-fold subdivided integer-string containing (for each $i$) $n_i+1$ times the integer $i$, together with (for each $i$) a cyclic automorphism in $C_{[n_i]}$. The latter will be represented by distinguishing one among the $n_i+1$ occurences of $i$ (namely the one which becomes the leading element after application of the cyclic automorphism). For instance, $1|\underline{2}\underline{1}|\underline{3}|123\in\LL^{cyc}(2,1,1;3)$ and $2|\underline{1}|\underline{2}12\in\LL^{cyc}(1,2;2)$ are typical ``cyclic'' lattice paths, where the distinguished integers have been underlined. The operad substitution maps of $\LL^{cyc}$ are defined by renumbering and substitution, like for $\LL$, except that the leftmost substring is now substituted into the distinguished integer, and the subsequent substrings are substituted in a circular order. For instance:$$1|\underline{2}\underline{1}|\underline{3}|123\circ_12|\underline{1}|\underline{2}12=\underline{2}12|\underline{3}2|\underline{4}|\underline{1}34\in\LL^{cyc}(1,2,1,1;3).$$This yields as a byproduct a combinatorial description of the cyclic category $\Delta C$ extending the one of the simplex category $\Delta$, mentioned in (\ref{duality}).

\subsection{The cyclic complexity index}\label{cyccomp}The definition of the complexity index for cyclic lattice paths is slightly trickier than for the ordinary lattice paths, since this index has to be preserved by the $C_{[n]}$-action on $\LL^{cyc}(n_1,\dots,n_k;n)$. Fortunately, the individual complexity indices $c_{ij}(x)$ of a lattice path $x\in\LL(n_1,\dots,n_k;n)$ have the property that if $c_{ij}(x)$ is \emph{even}, then any cyclic permutation of the integer-string either preserves the complexity index or \emph{lowers} it by one. Therefore, it makes sense to define the complexity index of a cyclic lattice path $x\in\LL^{cyc}(n_1,\dots,n_k;n)$ to be the lowest \emph{even} integer $c_{cyc}(x)$ that serves as an upper bound for all individual complexity indices $c_{ij}(g.x)$ of all $g.x,\,g\in C_{[n]}$. In this way we obtain an even filtration of $\LL^{cyc}$ by $\NN$-coloured suboperads $\LL^{cyc}_0\subset\LL^{cyc}_2\subset\LL^{cyc}_4\subset\cdots$. The underlying category of each filtration stage $\LL_{2m}^{cyc}$ is the cyclic category $\Delta C$, and $\LL^{cyc}_0=\Delta C$.

\subsection{Cyclic operads}\label{cyclicoperad}

For our purposes we need a planar, non-symmetric version of Getzler-Kapranov's \emph{cyclic operads}, cf. \cite{GK}. Recall that a non-symmetric operad $P=(P(n))_{n\geq 0}$ in $\EE$ induces for each \emph{planar rooted tree} $T$ an operad composition map $m_T:P(|v_0|)\otimes P(|v_1|)\otimes\cdots\otimes P(|v_k|)\to P(|T|)$, where $v_0$ is the root-vertex, $v_1,\dots,v_k$ are the other internal vertices of $T$, and $|v|$ (resp. $|T|$) denotes the number of edges incident to $v$ minus $1$ (resp. the number of external edges of $T$ minus $1$). We assume $T$ is equipped with an external root-edge (incident to $v_0$). The operad composition maps $m_T$ have to be associative with respect to grafting of planar rooted trees, and unital with respect to the given unit $I_\EE\to P(1)$. Such a planar rooted tree can be considered as a composition scheme where the external non-root-edges represent the inputs, the root-edge represents the output, and each vertex $v$ represents an individual composition of $|v|$ inputs yielding one output. A cyclic structure on a non-symmetric operad $P$ amounts to the possibility of changing cyclically the role of inputs/output for each of the $m_T$.

More precisely, a non-symmetric operad $P$ will be called \emph{cyclic}, if each $P(n)$ comes equipped with an action by the cyclic group $C_{[n]}=\ZZ/(n+1)\ZZ$ such that the operad composition maps of $P$ are compatible with the cyclic group actions in the following sense: The planar structure of a planar tree $T$ induces a cyclic order on the $|T|+1$ external edges. Each of the external edges of a planar tree can serve as root-edge yielding in general $|T|+1$ different planar rooted trees. Therefore, the cyclic group $C_{[n]}$ acts canonically on the set of planar rooted trees with $n+1$ external edges, by circular ``change of root-edge''. We adopt the convention that the cyclic group $C_{[n]}$ shifts the root-edge in clockwise circular order (i.e. the planar tree itself is rotated counterclockwise). For $g\in C_{[n]}$ and a planar rooted tree $T$ with $n+1$ external edges, we denote by $g.T$ the planar rooted tree obtained by such a $g$-shift of the root-edge. Observe that in $g.T$ each vertex gets an individual root-edge, namely the $v$-incident edge closest to the root-edge of $g.T$. Therefore, the cyclic permutation $g\in C_{[n]}$ induces for each vertex $v$ of $T$ a cyclic permutation $g_v\in C_{[|v|]}$ which takes the root-edge of $v$ in $T$ to the root-edge of $v$ in $g.T$.

The equivariance condition for a cyclic operad $P$ reads now as follows. For each planar rooted tree $T$ with vertices $v_0,v_1,\dots,v_k$ and each $g\in C_{[|T|]}$ one has:$$g\circ m_T=m_{g.T}\circ(g_{v_0}\otimes g_{v_1}\otimes\cdots\otimes g_{v_k}).$$The composition maps $m_T$ of a non-symmetric operad $P$ are generated by those $m_T$ whose tree $T$ is obtained by grafting an $n$-bunch onto the $i$-th input-edge of an $m$-bunch. This composition map is usually denoted $\circ_i:P(m)\otimes P(n)\to P(m+n-1)$. Let $\tau_{m}$ denote the standard generator of $C_{[m]}$. The equivariance conditions of a cyclic operad require then for any $x\in P(m),\,y\in P(n)$ that $$\tau_{m+n-1}(x\circ_i y)=\begin{cases}\tau_n(y)\circ_n\tau_m(x)&\text{for}\quad i=1,\\\tau_m(x)\circ_{i-1}y&\text{for}\quad i>1,\end{cases}$$and that the unit $I_\EE\to P(1)$ be fixed under $C_{[1]}$. Conversely, the relations involving the $\circ_i$-products imply all other relations of a cyclic operad. The non-symmetric operad $\Ass$ for associative monoids carries a (trivial) cyclic structure. Following Menichi \cite{Me} and Salvatore \cite{Sa}, a cyclic operad $P$ will be called \emph{multiplicative} if $P$ comes equipped with a morphism of cyclic operads $\Ass\to P$.

A non-trivial example of a cyclic operad in topological spaces is given by the family $(\Aa_n)_{n\geq 0}$ of Stasheff polytopes (often called \emph{associahedra}) whose algebras are the $A_\infty$-algebras. We shall see in Lemma \ref{endo} below that the Hochschild cochain complex of a symmetric Frobenius algebra is a multiplicative cyclic operad in chain complexes. We are now ready to state the following cyclic analog of (\ref{L2}):

\begin{prp}\label{L2cyc}Let $\EE$ be a cocomplete, closed symmetric monoidal category. The category of $\LL^{cyc}_2$-algebras in $\,\EE$ is isomorphic to the category of multiplicative cyclic operads in $\EE$.\end{prp}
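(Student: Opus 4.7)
The plan is to mirror the proof of Proposition \ref{L2}, constructing an explicit isomorphism of $\NN$-coloured operads $\LL_2^{cyc}\cong\OO^{cyc}$, where $\OO^{cyc}$ is the $\NN$-coloured operad encoding multiplicative cyclic operads. Concretely, $\OO^{cyc}(n_1,\dots,n_k;n)$ should consist of isomorphism classes of pairs $(T,v)$, where $T$ is a planar tree with a distinguished external root-edge and $n+1$ external edges in total, and $v:\{1,\dots,k\}\inc V(T)$ labels $k$ vertices such that $v(i)$ has $n_i+1$ incident edges \emph{one of which is distinguished as a local root-edge}, subject to the valence and labelling constraints of Proposition \ref{L2}. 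The substitution maps of $\OO^{cyc}$ graft the (global) root-edge of an incoming tree onto the local root-edge of the target vertex, as dictated by the equivariance conditions of (\ref{cyclicoperad}).

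First I would exploit the canonical splitting
$$\LL^{cyc}(n_1,\dots,n_k;n)\cong\LL(n_1,\dots,n_k;n)\times C_{[n_1]}\times\cdots\times C_{[n_k]}$$
from Section \ref{cyclic} to decompose a cyclic lattice path $x$ into an underlying lattice path equipped with $k$ cyclic automorphisms on the inputs. Under this splitting, restriction to $\LL^{cyc}_2$ corresponds, on the underlying $\LL$-factor, to a cyclic-complexity-two condition (stronger than $\LL_2$), while the cyclic automorphism in $C_{[n_i]}$ naturally provides the choice of local root-edge at the vertex $v(i)$ in the tree picture. The $C_{[n]}$-action on the output colour (carried by $\Delta C=\LL^{cyc}_u$) corresponds to the cyclic rotation of the external edges of $T$ relative to its distinguished global root-edge.

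The main obstacle will be matching the cyclic complexity bound $c_{cyc}(x)\leq 2$ with the tree condition. By (\ref{cyccomp}) this amounts to requiring $c_{ij}(g.x)\leq 2$ for all $g\in C_{[n]}$ and all $i<j$, which via the proof of \ref{L2} is equivalent to asking that the substrings $\gamma_i(g.x)$ form a nested sequence \emph{for every cyclic rotation} $g.x$. Reading the integer-string representation of $x$ as sitting on a circle rather than an interval, this is precisely the combinatorial condition of \emph{non-crossing nestedness} of the arcs $\gamma_i$, which is well-known to be equivalent to the data of a planar (unrooted) tree with a cyclic order on its external edges. The inductive assignment $x\mapsto(T_x,v_x)$ of Proposition \ref{L2} then extends to a bijection $\LL^{cyc}_2\to\OO^{cyc}$, where the distinguished external root-edge of $T_x$ is prescribed by the output-colour cyclic data and the local root-edge at $v_x(i)$ by the $C_{[n_i]}$-component. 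The inverse assignment runs through $T$ via the edge-path used in \ref{L2}, but now begins at the prescribed global root-edge rather than at an absolute leftmost one.

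Finally, I would verify the operad-structure compatibility. Substitution in $\LL^{cyc}$, as spelled out in Section \ref{cyclic}, inserts the leftmost substring of the incoming cyclic path at the distinguished integer and continues in cyclic order; substitution in $\OO^{cyc}$ grafts the global root-edge of the incoming tree onto the local root-edge of the target vertex and composes the cyclic orders accordingly. Under the bijection above these procedures agree, just as in the non-cyclic case of \ref{L2}, and the verification reduces to a direct inspection left to the diligent reader.
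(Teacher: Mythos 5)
Your overall strategy --- defining $\OO^{cyc}$ as planar rooted trees with a distinguished local root-edge at each labelled vertex, splitting $\LL^{cyc}\cong\LL\times C_{[n_1]}\times\cdots\times C_{[n_k]}$, and extending the bijection of Proposition \ref{L2} --- is exactly the paper's. But your proposal contains one genuine error, and it sits at the only point where the cyclic case needs a new argument: you assert that, under the splitting, membership in $\LL_2^{cyc}$ imposes on the underlying $\LL$-factor a condition ``stronger than $\LL_2$''. This is false: the two conditions coincide. For a fixed pair $i<j$, let $B_{ij}$ be the number of maximal blocks of the cyclic word obtained by closing up the integer-string of $x$ restricted to the letters $i,j$; since $i$-blocks and $j$-blocks alternate around the circle (and both letters occur), $B_{ij}$ is even. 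Every linear cut of this cyclic word has complexity $B_{ij}$ or $B_{ij}-1$, according to whether the cut lies inside a block or at a block boundary. Hence $c_{ij}(x)\leq 2$ for one cut iff $B_{ij}\leq 2$ iff $c_{ij}(g.x)\leq 2$ for every $g\in C_{[n]}$. This parity argument is precisely why (\ref{cyccomp}) defines $c_{cyc}$ as the least \emph{even} bound: on even filtration stages the cyclic condition agrees with the linear one, so that $\LL_2^{cyc}=\LL_2\times C_{[n_1]}\times\cdots\times C_{[n_k]}$ on the nose. That identity is what the paper's proof runs on.

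Note that your proof as written is internally inconsistent on this point: your own $\OO^{cyc}$ has underlying set $\OO\times C_{[n_1]}\times\cdots\times C_{[n_k]}$, which by Proposition \ref{L2} equals $\LL_2\times C_{[n_1]}\times\cdots\times C_{[n_k]}$; if the cyclic complexity condition were strictly stronger than $\LL_2$, the map $\LL_2^{cyc}\to\OO^{cyc}$ you construct could not be surjective, hence not the claimed bijection. Once the false parenthetical is replaced by the parity argument above, your proof goes through and matches the paper's: the paper likewise sets $\OO^{cyc}(n_1,\dots,n_k;n)=\OO(n_1,\dots,n_k;n)\times C_{[n_1]}\times\cdots\times C_{[n_k]}$, identifies this with $\LL_2^{cyc}$ by the displayed chain of equalities, and checks compatibility of substitution maps; your ``non-crossing arcs on a circle'' picture is a pleasant reformulation of the paper's closing observation that the occurrences of $i$ correspond to the sectors around the vertex $v_x(i)$ in their circular order. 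One smaller imprecision: the global root-edge of $T_x$ is not prescribed by any extra ``output-colour cyclic data'' (an element of $\LL^{cyc}$ carries none); it is determined, exactly as in Proposition \ref{L2}, by the cut-point of the linear subdivided string, i.e.\ by where the cyclic word is opened up.
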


\begin{proof}We use the same notations as in the proof of (\ref{L2}). The $\NN$-coloured operad $\OO^{cyc}$ for multiplicative cyclic operads derives from the $\NN$-coloured operad $\OO$ for multiplicative non-symmetric operads by the formula: $$\OO^{cyc}(n_1,\dots,n_k;n)=\OO(n_1,\dots,n_k;n)\times C_{[n_1]}\times\cdots\times C_{[n_k]}.$$The $k$-tuple of cyclic permutations represents a choice of ``root-edge'' for each of the $k$ labelled vertices $v(i)$ of the planar rooted tree $T\in\OO(n_1,\dots,n_k;n)$ (such an element acts by first letting act the cyclic permutations followed by composition according to the planar tree structure). The substitution maps for $\OO^{cyc}$ are defined in the same way as for $\OO$ by a substitution of trees into labelled vertices of a tree; this time however, the root-edge of the substituted tree must match with the distinguished root-edge of the vertex, and the other external edges are grafted in a circular order, using the planar structure of both trees. In particular we get\begin{align*}\OO^{cyc}(n_1,\dots,n_k;n)&=\OO(n_1,\dots,n_k;n)\times C_{[n_1]}\times\cdots\times C_{[n_k]}\\&=\LL_2(n_1,\dots,n_k;n)\times C_{[n_1]}\times\cdots\times C_{[n_k]}\\&=\LL_2^{cyc}(n_1,\dots,n_k;n).\end{align*}It remains to be shown that the $\NN$-coloured operad structures coincide as well. The only hint we give here is the observation that in the bijection (\ref{L2}) between subdivided integer-strings and labelled planar rooted trees, the occurences of $i$ in the integer-string $x$ correspond geometrically to the sectors around the vertex $v_x(i)$ in $T_x$ delimited by the edges incident to $v_x(i)$; the circular order of these sectors corresponds to the circular order of the occurences of $i$ in the integer-string $x$.\end{proof}

\subsection{Framed $E_2$-operads}\label{semi-direct}The standard topological construction of a framed $E_2$-operad is an enhanced version of the little disks operad $\DD=(\DD(k))_{k\geq 0}$, the so-called \emph{framed little disks operad} $f\DD=(f\DD(k))_{k\geq 0}$, introduced by Getzler in \cite{Get}. Recall that points of $\DD(k)$ (resp. $f\DD(k)$) are $k$-tuples of ``axial'' (resp. ``conformal'') disk-embeddings $D\to D$, whose images have pairwise disjoint interiors; here ``axial'' means any composite of translation and dilation, while ``conformal'' means any composite of translation, dilation and rotation. The operad substitution maps are induced by composition of embeddings. The projections identify $\DD(k)$ (resp. $f\DD(k)$) with a subspace of $\DD(1)^k$ (resp. $f\DD(1)^k$). A \emph{framed $E_2$-operad} is any \emph{$\Sg$-cofibrant} \cite{BM} topological operad weakly equivalent to the framed little disks operad. By a framed $E_2$-operad in chain complexes, we mean any \emph{$\Sg$-cofibrant} chain operad weakly equivalent to the singular chain complex of the framed little disks operad.

In order to \emph{recognize} framed $E_2$-operads, it is useful to observe that $f\DD$ derives from $\DD$ by a general categorical construction, which has been described by Markl \cite{Ml} and Salvatore-Wahl \cite{SW}. Indeed, $f\DD$ is a \emph{semi-direct product} $\DD\rtimes SO(2)$, where $\DD$ is considered as an operad in $SO(2)$-spaces. The group $SO(2)$ acts on $\DD(1)$ by conjugation; the resulting diagonal $SO(2)$-action on $\DD(1)^k$ restricts to $\DD(k)$, and it is readily verified that the substitution maps of the little disks operad are $SO(2)$-equivariant. One has then $f\DD(k)=\DD(k)\times SO(2)^k$, and the substitution maps of $f\DD$ are completely determined by the substitution maps of $\DD$ and the $SO(2)$-action on $\DD$, cf. \cite{Ml,SW}. One way of understanding the semi-direct product construction is to observe that the map $SO(2)\times\DD(k)\to\DD(k)\times SO(2)^k$ given on the first factor by the $SO(2)$-action, and on the second factor by the diagonal $SO(2)\to SO(2)^k$, induces a \emph{distributive law} (in the sense of Beck \cite{Beck}) between the monads associated to $\DD$ and to an $SO(2)$-action. The resulting composite monad is then precisely the monad associated to $f\DD$; in particular, the category of $f\DD$-algebras is isomorphic to the category of $\DD$-algebras in $SO(2)$-spaces, cf. \cite{Beck,Ml,SW}.

\subsection{The standard cocyclic objects}\label{cocyclic}We shall need a certain amount of the theory of (co)cyclic objects. The inclusion $i:\Delta\to\Delta C$ induces an adjunction$$i_!:\Set^{\Delta^\op}\lra\Set^{\Delta C^\op}:i^*.$$This adjunction identifies cyclic sets with $i^*i_!$-algebras in simplicial sets; thus, a cyclic set is a simplicial set $X$ with some algebraic extra-structure. It turns out (see \cite[Theorem 5.3]{FL} where $i^*i_!$ is denoted $F_{C_*}$) that this extra-structure induces a canonical $SO(2)$-action on the realization $|X|_{\delta_{top}}$ of the simplicial set $X$. In general, the realization of the simplicial set underlying a crossed simplicial group $\Delta G$ is a \emph{topological group}. In our particular case, the group structure on $|S^1|_{\delta_{top}}$ can be visualized as follows: The real line $\RR$ is the realization of the simplicial set $(\ZZ\times\Delta[1])/((n,1)\sim(n+1,0))_{n\in\ZZ}$. This simplicial set underlies a crossed simplicial group, which we shall denote by $\Delta\ZZ$, since $\Aut_{\Delta\ZZ}([n],[n])=\ZZ$ for all $n$. The crossed simplicial group structure encodes the $\ZZ$-action on $\RR$ by integer translation. Therefore, the realization of $\Delta\ZZ$ as a crossed simplicial group is the topological group $(\RR,+)$. It follows then from the short exact sequence of crossed simplicial groups $1\to\ZZ\to\Delta\ZZ\to\Delta C\to 1$ that the realization of $\Delta C$ as a crossed simplicial group is isomorphic to $(\RR/\ZZ,+)$, i.e. to $\,SO(2)$, cf. \cite[Remark 3.9]{FL}.\vspace{1ex}

We shall use the following two cocyclic objects:\begin{align*}\delta^{cyc}_{top}:\Delta C\to\Set^{\Delta C^\op}\overset{i^*}{\to}\Set^{\Delta^\op}&\overset{|-|_{\delta_{top}}}{\to}\Top\\\delta^{cyc}_\ZZ:\Delta C\to\Set^{\Delta C^\op}\overset{i^*}{\to}\Set^{\Delta^\op}&\overset{N_*(-;\ZZ)}{\to}\Ch(\ZZ)\end{align*}

We have $(\delta_{top}^{cyc})^m=\Delta_m\times SO(2)$, cf. \cite[Theorem 3.4]{J}. A cyclic operator $\phi\in\Delta C([m],[n])$ acts on the first factor by affine extension of the natural vertex-action, and on the second factor as rotation by $2\pi k/m$, where $k$ is determined by the factorization of $\phi$ into a cyclic permutation followed by a simplicial operator.

\begin{lem}\label{leftKan}The standard cocyclic space $\delta_{top}^{cyc}$ (resp. chain complex $\delta_\ZZ^{cyc}$) may be identified with the left Kan extension of the standard cosimplicial space $\delta_{top}$ (resp. chain complex $\delta_\ZZ$) along the canonical inclusion $i:\Delta\to\Delta C$.\end{lem}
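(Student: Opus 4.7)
The plan is to identify both sides of the claimed isomorphism with a common coend, via the pointwise formula for left Kan extensions. Since $\Top$ and $\Ch(\ZZ)$ are cocomplete and $i:\Delta\to\Delta C$ is a functor between small categories, the left Kan extension $i_!\delta_{top}$ is given pointwise by
\[
(i_!\delta_{top})([n]) \;=\; \int^{[m]\in\Delta}\Delta C(i[m],[n])\otimes\delta_{top}^m,
\]
where $\otimes$ denotes the copower by a set (i.e.\ coproduct of copies of $\delta_{top}^m$ indexed by the hom-set).

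On the other side, unwinding the definition of $\delta_{top}^{cyc}$, its value at $[n]\in\Delta C$ is the $\delta_{top}$-realization of the simplicial set $i^*\Delta C(-,[n])$, whose set of $m$-simplices is precisely $\Delta C(i[m],[n])$. Since by definition the realization functor is the coend $|-|_{\delta_{top}}=(-)\otimes_\Delta\delta_{top}^\cdot$, substituting $X=i^*\Delta C(-,[n])$ yields
\[
\delta_{top}^{cyc}([n]) \;=\; \int^{[m]\in\Delta}\Delta C(i[m],[n])\otimes\delta_{top}^m,
\]
which is identical to the formula above. Both constructions are visibly natural in $[n]\in\Delta C$, since a cyclic operator $[n]\to[n']$ acts on each side by postcomposition in the factor $\Delta C(i[m],-)$; hence the isomorphism is one of cocyclic spaces. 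For the chain complex case we observe that the normalized chain functor $N_*(-;\ZZ):\Set^{\Delta^\op}\to\Ch(\ZZ)$ is itself the left Kan extension of $\delta_\ZZ$ along the Yoneda embedding $\delta_{yon}:\Delta\to\Set^{\Delta^\op}$, and therefore admits the analogous coend presentation $N_*(X;\ZZ)=\int^{[m]}X([m])\otimes\delta_\ZZ^m$; running through the same calculation gives $i_!\delta_\ZZ\cong\delta_\ZZ^{cyc}$.

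There is no substantive obstacle: the lemma is really just an unpacking of the three functors defining $\delta_{top}^{cyc}$ (resp.\ $\delta_\ZZ^{cyc}$) and the recognition that their composite evaluates to the same coend as $i_!\delta_{top}$ (resp.\ $i_!\delta_\ZZ$). The only matter requiring mild care is that the $\Delta C$-functoriality on both sides is induced by the same postcomposition $\Delta C(i[m],[n])\to\Delta C(i[m],[n'])$, but this is immediate from the Yoneda embedding on one side and the universal property of the left Kan extension on the other. In fact the argument applies verbatim to any crossed simplicial group $\Delta G$ in place of $\Delta C$, exhibiting the realization of the underlying simplicial set of $\Delta G$ as the left Kan extension of $\delta$ along $\Delta\hookrightarrow\Delta G$.
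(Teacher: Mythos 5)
Your proof is correct and takes essentially the same route as the paper's: the paper applies the pointwise Kan extension formula to identify $i_!\delta_{yon}$ with $i^*\circ\delta_{yon}^{cyc}$ at the level of simplicial sets and then pushes forward along the left adjoints $|-|_{\delta_{top}}$ and $N_*(-;\ZZ)$ (which preserve left Kan extensions), whereas you carry out the same computation by writing both sides directly as the coend $\int^{[m]\in\Delta}\Delta C(i[m],[n])\otimes\delta^m$ in $\Top$ and $\Ch(\ZZ)$. The essential ingredients---the pointwise formula together with colimit-preservation of realization and of normalized chains---are identical, so this is merely a repackaging of the paper's argument.
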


\begin{proof}Since as well $|\!-\!|_{\delta_{top}}$ as well $N_*(-;\ZZ)$ are left adjoints and therefore preserve left Kan extensions, it is sufficent to show that the left Kan extension of the Yoneda-embedding $\delta_{yon}:\Delta\to\Set^{\Delta^\op}$ may be identified with the corestricted cyclic Yoneda-embedding $i^*\circ\delta_{yon}^{cyc}:\Delta C\to\Set^{(\Delta C)^\op}\to\Set^{\Delta^\op}$. This follows from the pointwise formula of the left Kan extension along $i:\Delta\to\Delta C$.\end{proof}

\begin{lem}\label{Connes}(a) The (normalized) chain complex $N_*(S^1;\ZZ)$ of the simplicial circle is an exterior algebra on one generator of degree $1$. The category of $N_*(S^1;\ZZ)$-modules is isomorphic to the category of \emph{mixed complexes} (i.e. $\NN$-graded abelian groups with differentials $b$ and $B$ of degree $-1$ and $+1$ such that $Bb+bB=0$).\vspace{1ex}

(b) The free $N_*(S^1;\ZZ)$-module on the chain complex of a simplicial set $X$ is chain-homotopy equivalent to the chain complex $N_*(i^*i_!X;\ZZ)$ of the free cyclic set generated by $X$. The chain equivalence is induced by an Eilenberg-Zilber map.\end{lem}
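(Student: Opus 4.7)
For part (a), I would begin with a direct calculation: the simplicial circle $S^1=\Delta[1]/\partial\Delta[1]$ has exactly one non-degenerate simplex in degrees $0$ and $1$ and none elsewhere, so the normalized chain complex $N_*(S^1;\ZZ)$ is $\ZZ$ in degrees $0$ and $1$ with trivial differential. To get the algebra structure, I would invoke the crossed simplicial group $\Delta C$: its realization $|S^1|_{\delta_{top}}\cong SO(2)$ (cf.\ the paragraph preceding (\ref{cocyclic})) is a topological group, and combining the induced multiplication with the Eilenberg-Zilber map endows $N_*(S^1;\ZZ)$ with a DGA structure. Because $N_2(S^1;\ZZ)=0$, the degree-$1$ generator $\epsilon$ automatically satisfies $\epsilon^2=0$, so $N_*(S^1;\ZZ)\cong\ZZ[\epsilon]/(\epsilon^2)$. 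The second half of (a) is then formal unpacking: a DG-module structure over $\ZZ[\epsilon]/(\epsilon^2)$ on a chain complex $(M,b)$ amounts to a degree-$+1$ endomorphism $B$ (the action of $\epsilon$) satisfying $B^2=0$ (from $\epsilon^2=0$) and $Bb+bB=0$ (from the Leibniz rule, since $\epsilon$ is a cycle), which are precisely the axioms of a mixed complex.

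For part (b), my strategy is to reduce to the classical Eilenberg-Zilber theorem via the structural result of Fiedorowicz-Loday \cite{FL} that the realization of $i^*i_!X$ is naturally $SO(2)$-equivariantly homeomorphic to $|X|_{\delta_{top}}\times S^1$. Concretely, the unique factorization $\Delta C([m],[n])\cong\Delta([m],[n])\times C_{[m]}$ of a crossed simplicial group yields a natural bijection $(i^*i_!X)_n\cong X_n\times(S^1)_n$, under which the simplicial operators on the left take a diagonal-like form dictated by the crossed simplicial structure. Whether or not this assembles into an outright isomorphism of simplicial sets, applying the Eilenberg-Zilber shuffle map provides a natural chain homotopy equivalence
\[
N_*(i^*i_!X;\ZZ)\;\simeq\;N_*(X;\ZZ)\otimes N_*(S^1;\ZZ),
\]
and the right-hand side, with its action coming from right multiplication on the second tensor factor, is manifestly the free $N_*(S^1;\ZZ)$-module generated by $N_*(X;\ZZ)$.

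The main obstacle is the compatibility of this equivalence with the two module structures. The left-hand side inherits an $N_*(S^1;\ZZ)$-action from the $SO(2)$-action on $|i^*i_!X|_{\delta_{top}}$ (which is itself induced by the cyclic structure), while the right-hand side has the obvious free-module action. These must agree up to chain homotopy in order for the equivalence to be one of $N_*(S^1;\ZZ)$-modules. Since the Eilenberg-Zilber shuffle map is lax symmetric monoidal and is homotopy-equivariant under the $SO(2)$-action on two-fold products of cyclic realizations, the desired compatibility holds up to a natural chain homotopy, which is exactly what is needed for the stated chain-homotopy equivalence.
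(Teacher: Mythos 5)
Your proposal is correct and follows essentially the same route as the paper: part (a) is dismissed there as classical (and your degree count together with the forced relation $\epsilon^2=0$, plus the unpacking of DG-modules over $\ZZ[\epsilon]/(\epsilon^2)$, is the standard justification), while for part (b) the paper likewise identifies $i^*i_!X$ with a crossed product $X\bowtie S^1$ and observes that the classical Eilenberg-Zilber shuffle map $N_*(X;\ZZ)\otimes N_*(S^1;\ZZ)\to N_*(X\bowtie S^1;\ZZ)$ remains well defined despite the twisted simplicial structure, exactly as you assert. Two minor remarks: your appeal to the topological group structure on $|S^1|_{\delta_{top}}$ to manufacture the DGA structure is an unnecessary (and not quite rigorous) detour, since the group multiplication on the realization is not induced by a simplicial map and, as you yourself note, unitality plus $N_2(S^1;\ZZ)=0$ already force the exterior algebra structure; and your closing paragraph on module-structure compatibility addresses more than the statement (or the paper's proof) requires.
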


\begin{proof} (a) is classical. For (b) observe that the free cyclic set $i^*i_!X$ generated by $X$ is a ``crossed'' product $X\bowtie S^1$ for which the classical Eilenberg-Zilber shuffle map $N_*(X;\ZZ)\otimes N_*(S^1;\ZZ)\to N_*(X\bowtie S^1;\ZZ)$ remains well defined.\end{proof}

\subsection{Recognition principle for (framed) $E_2$-operads}\label{recognition}Since the (framed) little disks operad is aspherical in each arity, there is an intrinsic way to characterize (framed) $E_2$-operads, due to Fiedorowicz \cite{F} and Salvatore-Wahl \cite{SW}. 

Denote by $\BB_k$ (resp. $\RB_k$) the braid (resp. ribbon-braid) group on $k$ strands. In particular, one has $\BB_k\cong\pi_1(\DD(k)/\Sg_k)$ and $\RB_k\cong\pi_1(f\DD(k)/\Sg_k)$. A \emph{braided} (resp. \emph{ribbon-braided}) operad $\tilde{\OO}$ in a monoidal model category $\EE$ with cofibrant unit $I_\EE$ is a sequence of objects $\tilde{\OO}(k)$ of $\EE$, equipped with a unit $I_\EE\to\tilde{\OO}(1)$, with actions of $\BB_k$ (resp. $\RB_k$) on $\tilde{\OO}(k)$, and with operad substitution maps which are equivariant with respect to canonical substitution maps $\BB_k\times\BB_{i_1}\times\cdots\times\BB_{i_k}\to\BB_{i_1+\cdots+i_k}$ (resp. $\RB_k\times\RB_{i_1}\times\cdots\times\RB_{i_k}\to\RB_{i_1+\cdots+i_k}$). We require that $\tilde{\OO}(0)=I_\EE$ and that $\tilde{\OO}(k)$ is cofibrant for the projective model structure on $\BB_k$- (resp. $\RB_k$)-objects in $\EE$.

Given an epimorphism of discrete groups $\phi:G\to H$, a morphism $f:X\to Y$ in $\EE$ is called a \emph{$\phi$-covering}, whenever $X$ is a $G$-object, $Y$ is an $H$-object and $f:X\to Y$ is a \emph{$\phi$-equivariant fibration} in $\EE$ such that $X/Ker(\phi)\cong Y$ via $f$. A \emph{(ribbon-)braided covering} of a symmetric operad $\OO$ consists of a (ribbon-)braided operad $\tilde{\OO}$ together with a map of (non-symmetric) operads $\tilde{\OO}\to\OO$ inducing for each $k$ a $(\BB_k\to\Sg_k)$- resp. ($\RB_k\to\Sg_k$)-covering $\tilde{\OO}(k)\to\OO(k)$. A (ribbon-)braided covering is called \emph{universal} if, for each $k$, the object $\tilde{\OO}(k)$ is weakly contractible, i.e. the canonical augmentation $\tilde{\OO}(k)\to\tilde{\OO}(0)$ is a weak equivalence in $\EE$.

The recognition principles of Fiedorowicz and Salvatore-Wahl read as follows: \emph{a topological operad $\OO$ is $E_2$ (resp. framed $E_2$) if and only if $\OO$ admits a universal braided (resp. ribbon-braided) covering}, cf. \cite[Example 3.1]{F}, \cite[Theorem 7.3]{SW}. For a general monoidal model category $\EE$ satisfying the axioms of (\ref{sss}), the existence of a universal (ribbon-)braided covering is a stronger condition than being a (framed) $E_2$-operad in the sense of (\ref{sss}) and (\ref{semi-direct}). For instance, the singular chain complex of the (framed) little disks operad admits a universal (ribbon-)braided covering in the monoidal model category of chain complexes, but there might be weakly equivalent $\Sg$-cofibrant chain operads which do not admit such a covering.

It is however important to observe that under the hypotheses of Theorem \ref{main}, the $E_2$-operad $\Coend_{\LL_2}(\delta)$ admits a universal braided covering in $\EE$. Indeed, as shown in the proof of Theorem \ref{main}, there is a zigzag of weak equivalences\begin{diagram}[small]\Coend_{\LL_2}(\delta)&\lTo^\sim&\Coend_{\widehat{\LL}_2}(\delta)&\rTo^\sim B_\delta\cgr_2\end{diagram}relating  $\delta$-condensation of $\LL_2$ and $\delta$-realization of the categorical $E_2$-operad $\cgr_2$. Since there is a general product-preserving construction of the universal covering category of a category, there exists a universal braided covering $\tilde{\cgr}_2$ of the operad $\cgr_2$. The latter induces universal braided coverings in $\EE$ for the whole zizag above, replacing the relevant colimits and homotopy colimits over $\cgr_2(k)$ by the corresponding colimits and homotopy colimits over the universal covering $\tilde{\cgr}_2(k)$.

There is a convenient method of constructing universal ribbon-braided coverings, based on the identification $\RB=\BB\wr\ZZ$, i.e. $\RB_k=\BB_k\ltimes\ZZ^k$, cf. Getzler \cite{Get}. Here $\BB_k$ acts on $\ZZ^k$ by permuting the factors via $\BB_k\to\Sg_k$. We put $\RS=\Sg\wr\ZZ$, i.e. $\RS_k=\Sg_k\ltimes\ZZ^k$, and define a \emph{ribbon-symmetric} operad $\tilde{\OO}$ like above, but with $\RS_k$-actions on $\tilde{\OO}(k)$. A \emph{$\ZZ$-covering} $\tilde{\OO}\to\OO$ is then a map of non-symmetric operads such that $\tilde{\OO}$ is ribbon-symmetric and $\tilde{\OO}(k)\to\OO(k)$ is an $(\RS_k\to\Sg_k)$-covering for each $k$. Then a universal ribbon-braided covering $\tilde{\tilde{\OO}}\to\OO$ can be obtained as the composite of a universal braided covering $\tilde{\tilde{\OO}}\to\tilde{\OO}$ and a $\ZZ$-covering $\tilde{\OO}\to\OO$.

\begin{thm}\label{cyclcond}Condensation of $\LL_2^{cyc}$ with respect to $\delta_{top}^{cyc}$ (resp. $\delta_\ZZ^{cyc}$) yields a framed $E_2$-operad in topological spaces (resp. chain complexes).\end{thm}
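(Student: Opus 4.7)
The plan is to reduce the theorem to Theorem \ref{main} via the recognition principles recalled in (\ref{recognition}) and the semi-direct product philosophy for framed $E_2$-operads recalled in (\ref{semi-direct}). Since the (framed) little disks operad satisfies $f\DD = \DD \rtimes SO(2)$ and since ribbon-braided coverings factor as $\tilde{\tilde{\OO}}\to\tilde{\OO}\to\OO$ with $\tilde{\tilde\OO}\to\tilde\OO$ braided and $\tilde\OO\to\OO$ a $\ZZ$-covering, it suffices to exhibit such a factorisation for $\OO=\Coend_{\LL_2^{cyc}}(\delta^{cyc})$, with contractible top object.

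First I would construct the $\ZZ$-covering from the short exact sequence $1\to\ZZ\to\Delta\ZZ\to\Delta C\to 1$ of crossed simplicial groups (\ref{cocyclic}). Replacing $\Delta C$ by $\Delta\ZZ$ in the definition of the cyclic lattice path operad yields an $\NN$-coloured operad $\LL^\ZZ$ fitting into a pullback analogous to the one characterising $\LL^{cyc}$, and equipped with a canonical surjection $\LL^\ZZ\to\LL^{cyc}$ whose fibres are products of copies of $\ZZ$. The cocyclic objects $\delta^{cyc}_{top}$ and $\delta^{cyc}_\ZZ$ lift to $\Delta\ZZ$-objects $\delta^\ZZ_{top}$ and $\delta^\ZZ_\ZZ$ (left Kan extensions of $\delta_{top}$, $\delta_\ZZ$ along $\Delta\hookrightarrow\Delta\ZZ$, cf. (\ref{leftKan})), whose geometric realisations encode $\RR$ in place of $SO(2)$; in the chain setting Lemma \ref{Connes} gives in each degree a tensor decomposition by $N_*(\RR;\ZZ)\simeq\ZZ$. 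By the same proof as (\ref{main}), strong $\delta^\ZZ$-reductivity of $\LL^\ZZ_2$ would then give $\Coend_{\LL_2^\ZZ}(\delta^\ZZ)\to \Coend_{\LL_2^{cyc}}(\delta^{cyc})$ as a $\ZZ$-covering (the ribbon-symmetric structure comes from the $\ZZ$-action on each colour via the crossed simplicial group structure, and compatibility with the operad substitution maps is exactly the equivariance required for $\RS=\Sg\wr\ZZ$).

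Next I would construct the universal braided covering of $\Coend_{\LL_2^\ZZ}(\delta^\ZZ)$ by running the zigzag of weak equivalences from the proof of Theorem \ref{main} in the cyclic setting. Proposition \ref{totcom} extends to a morphism $\LL^{cyc}\to\cgr$ (and then to $\LL^\ZZ\to\cgr$), since the cyclic complexity index (\ref{cyccomp}) is defined precisely to make $c_{tot}$ cyclic-invariant; hence the homotopy colimit trick of (\ref{cellulation}) applies to produce an intermediate $\widehat{\LL}_2^\ZZ$ and a zigzag
\begin{diagram}[small]
\Coend_{\LL_2^\ZZ}(\delta^\ZZ)&\lTo^\sim&\Coend_{\widehat{\LL}_2^\ZZ}(\delta^\ZZ)&\rTo^\sim& B_{\delta^\ZZ}\cgr_2\,\rtimes(\text{ribbon data}).
\end{diagram}
Replacing $\cgr_2$ throughout by its universal braided covering $\tilde{\cgr}_2$ (available in $\Cat$ since universal covers are product-preserving) and taking homotopy colimits over $\tilde{\cgr}_2(k)$ produces the required universal braided covering, with contractibility of the fibres following from contractibility of $\tilde{\cgr}_2(k)$ and of the realisation of $\Delta\ZZ$. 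Composing with the $\ZZ$-covering of the previous step gives a universal ribbon-braided covering of $\Coend_{\LL_2^{cyc}}(\delta^{cyc})$, and the recognition principle of Fiedorowicz--Salvatore--Wahl concludes.

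The main obstacle will be verifying strong $\delta^{cyc}$-reductivity (and $\delta^\ZZ$-reductivity) of the cyclic lattice path operad, that is, showing that $\xi_k(\delta^{cyc})^n\to\xi_k(\delta^{cyc})^0$ and $\Coend_{\LL^{cyc}}(\delta^{cyc})(k)\to\xi_k(\delta^{cyc})^0$ are universal weak equivalences, as these underpin every step above. In the topological case this should be deduced from McClure--Smith's splitting $\xi_k(\delta_{top})^\cdot\cong\xi_k(\delta_{top})^0\times\delta_{top}^\cdot$ (cf. (\ref{mainexamples}a)) tensored with the identification $(\delta^{cyc}_{top})^m=\Delta_m\times SO(2)$ from (\ref{cocyclic}); in the chain case it should follow from the Eilenberg--Zilber decomposition of Lemma \ref{Connes}(b) together with the Reedy-fibration argument of (\ref{mainexamples}c). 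A subsidiary technical point is to check that the ribbon-braided operad structure assembled from the $\ZZ$- and braided coverings really is $\Sg$-cofibrant in chain complexes, which should follow from the freeness of the $\Sg_k\ltimes\ZZ^k$-actions on the combinatorial colimits involved.
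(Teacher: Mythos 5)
Your overall strategy -- factor a universal ribbon-braided covering as a universal braided covering followed by a $\ZZ$-covering, then invoke the Fiedorowicz--Salvatore--Wahl recognition principle -- is indeed the paper's strategy, but the step where you produce the braided covering has a genuine gap. You assert that Proposition \ref{totcom} extends to a morphism of coloured operads $\LL^{cyc}\to\cgr$ (and then $\LL^{\ZZ}\to\cgr$), ``since the cyclic complexity index (\ref{cyccomp}) is defined precisely to make $c_{tot}$ cyclic-invariant''. This is false. In any morphism of coloured operads with target $\cgr$, every unary operation must be sent to the unique element of $\cgr(1)$; since the unary part of $\LL^{cyc}$ is $\Delta C$ (resp.\ of your $\LL^{\ZZ}$ is $\Delta\ZZ$), compatibility with substitution forces $c_{tot}(g.x)=c_{tot}(x)$ for every cyclic rotation $g\in C_{[n]}$. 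But $c_{tot}$ is not cyclically invariant: rotating an integer-string can lower an even individual index $c_{ij}$ by one (this is exactly why (\ref{cyccomp}) rounds up to the nearest even integer), and it generally reverses the orientation components $\sg_{ij}$, which record which of the directions $i,j$ occurs \emph{first} -- a notion with no cyclic meaning. The cyclic complexity index is only a $C_{[n]}$-invariant upper bound, enough to define the filtration $\LL_{2m}^{cyc}$ by suboperads, but it does not assemble into a $\cgr$-valued operad map. Consequently the right-hand side of your zigzag, $B_{\delta^{\ZZ}}\cgr_2\rtimes(\text{ribbon data})$, is not defined, and the cellulation argument of Theorem \ref{main} cannot be transplanted to $\LL_2^{\ZZ}$ or $\LL_2^{cyc}$. (Your first step, the $\ZZ$-covering via $\LL^{\ZZ}$, is also only conditional, since it rests on a ``strong $\delta^{\ZZ}$-reductivity'' that you defer rather than prove.)

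The paper avoids both problems by never doing complexity theory in the cyclic world. Its key observation is that $\LL_2^{cyc}(-,\dots,-;n)=\LL_2(-,\dots,-;n)\times C_{[-]}\times\cdots\times C_{[-]}$ is the \emph{free} multicyclic set on the multisimplicial set $\LL_2(-,\dots,-;n)$, so that by the multivariate version of \cite[Theorem 5.3]{FL} (topologically), resp.\ the Eilenberg--Zilber argument of Lemma \ref{Connes} (in chains), condensation splits: $\Coend_{\LL_2^{cyc}}(\delta_{top}^{cyc})(k)\cong\Coend_{\LL_2}(\delta_{top})(k)\times SO(2)^k$ and $\Coend_{\LL_2^{cyc}}(\delta_{\ZZ}^{cyc})(k)\simeq\Coend_{\LL_2}(\delta_{\ZZ})(k)\otimes N_*(S^1)^{\otimes k}$. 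The $\ZZ$-covering is then written down directly on these split forms, replacing $SO(2)$ by $\RR$ (resp.\ $N_*(S^1)$ by $N_*(\RR)$) via the exact sequence $\ZZ\to\Delta\ZZ\to\Delta C$; no operad over $\Delta\ZZ$ and no cyclic reductivity statement are needed. The universal braided covering is then constructed on the \emph{covering} operad, which contains the ordinary $E_1$- and $E_2$-operads $\Coend_{\LL_1}(\delta)\subset\Coend_{\LL_2}(\delta)$ where the complexity machinery of (\ref{totcom}), (\ref{main}) and (\ref{recognition}) is available, using Fiedorowicz's argument. You do cite the two splitting ingredients ((\ref{mainexamples}a)/(\ref{cocyclic}) and Lemma \ref{Connes}(b)) at the very end as a hoped-for route to reductivity; used as the paper uses them, they make your $\LL^{\ZZ}$, the cyclic reductivity, and the cyclic extension of $c_{tot}$ all unnecessary, whereas used as you propose, the braided-covering step remains unproved.
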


\begin{proof}In the topological case we get, by (\ref{leftKan}) and by adjunction,\begin{align*}\Coend_{\LL_2^{cyc}}(\delta_{top}^{cyc})(k)&=\uHom_{\Delta C}(\delta_{top}^{cyc},|\LL_2^{cyc}(-,\cdots,-;\cdot)|_{\delta_{top}^{cyc}})\\&=\uHom_{\Delta C}(i_!\delta_{top},|\LL_2^{cyc}(-,\cdots,-;\cdot)|_{\delta_{top}^{cyc}})\\&=\uHom_\Delta(\delta_{top},i^*|\LL_2^{cyc}(-,\cdots,-;\cdot)|_{\delta_{top}^{cyc}}).\end{align*}
The multicyclic set $\LL_2^{cyc}(-,\cdots,-;n)=\LL_2(-,\cdots,-;n)\times C_{[-]}\times\cdots\times C_{[-]}$ is the \emph{free} multicyclic set generated by the multisimplicial set $\LL_2(-,\cdots,-;n)$. A multivariate version of \cite[Theorem 5.3]{FL} implies that its $\delta_{top}$-realization is isomorphic to $|\LL_2(-,\cdots,-;n)|_{\delta_{top}}\times SO(2)^k$. Therefore we get$$\Coend_{\LL_2^{cyc}}(\delta_{top}^{cyc})(k)=\Coend_{\LL_2}(\delta_{top})(k)\times SO(2)^k,\quad k\geq 0.$$
We have seen in (\ref{cocyclic}) that the short exact sequence of crossed simplicial groups $\ZZ\to\Delta\ZZ\to\Delta C$ induces upon realization the $\ZZ$-principal fibration $\ZZ\to\RR\to\RR/\ZZ$. Therefore, identifying $\RR/\ZZ$ with $SO(2)$ yields a sequence of maps$$\Coend_{\LL_2}(\delta_{top})(k)\times \RR^k\rightarrow\Coend_{\LL^{cyc}_2}(\delta_{top}^{cyc})(k),\quad k\geq 0.$$There is a unique structure of \emph{ribbon-symmetric} operad on the left hand side such that the maps define a \emph{$\ZZ$-covering} of operads, cf.  (\ref{recognition}). Since $\Coend_{\LL_2}(\delta_{top})$ is a topological $E_2$-operad, cf. (\ref{mainexamples}a), whose $E_1$-suboperad $\Coend_{\LL_1}(\delta_{top})$ is also a suboperad of the left hand side above, Fiedorowicz's argument  \cite{F} gives a universal braided covering of the left hand side. By composition, this induces a universal ribbon-braided covering of the right hand side, which is thus a framed $E_2$-operad.

In the chain complex case, we get like above$$\Coend_{\LL_2^{cyc}}(\delta_\ZZ^{cyc})(k)=\uHom_\Delta(\delta_\ZZ,i^*|\LL_2^{cyc}(-,\cdots,-;\cdot)|_{\delta_\ZZ^{cyc}}).$$Again using that $\LL_2^{cyc}(-,\cdots,-;n)$ is the free multicyclic set generated by the multisimplicial set $\LL_2(-,\cdots,-;n)$, we can write its $\delta_\ZZ$-realization as a coend$$|\LL_2^{cyc}(-,\cdots,-;n)|_{\delta_\ZZ^{cyc}}=\LL_2(-,\cdots,-;n)\otimes_{\Delta\times\cdots\times\Delta}j^*j_!(\delta_{\ZZ}\otimes\cdots\otimes\delta_\ZZ)$$where $j=i\times\cdots\times i:\Delta\times\cdots\times\Delta\to\Delta C\times\cdots\times\Delta C$. By (\ref{leftKan}) and (\ref{Connes}b), the chain complex $(i^*i_!\delta_\ZZ)^n=(i^*\delta_\ZZ^{cyc})^n=N_*(i_*i_!\Delta[n];\ZZ)$ is chain-homotopy equivalent to $N_*(\Delta[n];\ZZ)\otimes N_*(S^1;\ZZ)$, hence the cosimplicial chain complex $i^*i_!\delta_\ZZ$ is levelwise chain-homotopy equivalent to $\delta_\ZZ\otimes N_*(S^1;\ZZ)$. A multivariate version of this yields a levelwise chain-homotopy equivalence between $j^*j_!(\delta_\ZZ\otimes\cdots\otimes\delta_\ZZ)$ and $\delta_\ZZ\otimes\cdots\otimes\delta_\ZZ\otimes N_*(S^1;\ZZ)^{\otimes k}$. Therefore, the coend above is chain-homotopy equivalent to $|\LL_2(-,\cdots,-;n)|_{\delta_\ZZ}\otimes N_*(S^1;\ZZ)^{\otimes k}$. Since $\delta_\ZZ$-totalization is conormalization, see (\ref{conorm}), we finally get chain-homotopy equivalences$$\Coend_{\LL_2^{cyc}}(\delta_\ZZ^{cyc})(k)\simeq\Coend_{\LL_2}(\delta_\ZZ)(k)\otimes N_*(S^1)^{\otimes k},\quad k\geq 0.$$The components of the right hand side form a chain suboperad of $\Coend_{\LL_2^{cyc}}(\delta_\ZZ^{cyc})$. Since $\Coend_{\LL_2}(\delta_\ZZ)$ is an $E_2$-chain operad, cf. (\ref{mainexamples}c), the end of proof is similar to the topological case, and uses instead that the sequence of maps$$\Coend_{\LL_2}(\delta_\ZZ)(k)\otimes N_*(\RR)^{\otimes k}\rightarrow\Coend_{\LL_2}(\delta_\ZZ)(k)\otimes N_*(S^1)^{\otimes k},\quad k\geq 0,$$defines a $\ZZ$-covering of chain operads whose domain contains the $E_1$-chain operad $\Coend_{\LL_1}(\delta_\ZZ)$ as a chain suboperad.\end{proof}

\begin{rmk}Using (\ref{condensation}), Theorem \ref{cyclcond} and Proposition \ref{L2cyc} establish the \emph{topological cyclic Deligne conjecture}: the totalization of a multiplicative cyclic operad in spaces admits an action by a framed $E_2$-operad. This is the main result of Salvatore's recent work \cite{Sa}. It follows from (\ref{mainexamples}a) that his construction of a framed $E_2$-action is isomorphic to ours. His proof that the constructed operad is indeed a framed $E_2$-operad is similar to the one presented above. Voronov's construction of a \emph{cacti operad} action is also closely related, cf. \cite{Ka2,Sa,V2}. 

In the chain complex case, there is a chain suboperad $f\XX_2$ of $\Coend_{\LL_2^{cyc}}(\delta_\ZZ^{cyc})$, a ``framed analog'' of the second filtration stage $\XX_2$ of the surjection suboperad $\XX$ of $\Coend_{\LL}(\delta_\ZZ)$, cf. (\ref{mainexamples}c). This framed $E_2$-chain operad $f\XX_2$ is closely related to Sullivan's \emph{chord diagrams}, cf. Tradler-Zeinalian \cite{TZ}.\end{rmk}


\subsection{Symmetric Frobenius monoids}\label{symmFrob}A \emph{Frobenius monoid} $(A,\mu,\eta,\phi)$ in a closed symmetric monoidal category $(\EE,\otimes,I,\uHom)$ is a monoid $(A,\mu,\eta)$ equipped with an isomorphism of \emph{right} $A$-modules $\phi:A\cong\uHom(A,I)$. This amounts to an exact pairing $<\!-,-\!>:A\otimes A\to I$ such that $<\!\mu(-,-),-\!>=<\!-,\mu(-,-)\!>$. A Frobenius monoid is \emph{symmetric} if the following equivalent conditions are satisfied:\begin{itemize}\item[(i)]$\phi$ is an isomorphism of $A$-bimodules;\item[(ii)]The map $<\!\mu(-,-),-\!>=<\!-,\mu(-,-)\!>:A\otimes A\otimes A\to I$ is invariant under cyclic permutation of the arguments;\item[(iii)]the exact pairing $<\!-,-\!>:A\otimes A\to I$ is symmetric.\end{itemize}

\begin{lem}[\cite{GK,Me}]\label{endo}The endomorphism operad of a symmetric Frobenius monoid is a multiplicative cyclic operad.\end{lem}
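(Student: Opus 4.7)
My plan is to build the cyclic structure on $\End_A(n)=\uHom(A^{\otimes n},A)$ directly from the Frobenius pairing, then check the required relations.

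First I would use the exact pairing $\langle-,-\rangle\colon A\otimes A\to I$ to set up a $C_{[n]}$-equivariant identification
\[
\Phi_n\colon\uHom(A^{\otimes n},A)\xrightarrow{\ \cong\ }\uHom(A^{\otimes(n+1)},I),\qquad f\longmapsto\bigl((a_0,\dots,a_n)\mapsto\langle f(a_1,\dots,a_n),a_0\rangle\bigr),
\]
which is an isomorphism since the pairing is exact. The target carries a canonical left $C_{[n]}$-action by cyclic permutation of the $n+1$ arguments, and I pull this back along $\Phi_n$ to get the desired action of $\tau_n\in C_{[n]}$ on $\End_A(n)$. Relation (iii) of (\ref{symmFrob}) (symmetry of the pairing) forces $\tau_1(\mathrm{id}_A)=\mathrm{id}_A$, and relation (ii) (cyclic invariance of the triple pairing) forces $\tau_2(\mu)=\mu$; the unit $\eta\in\End_A(0)$ has trivial cyclic group, so nothing to check. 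Thus the canonical map $\Ass\to\End_A$ (encoding the monoid structure of $A$) lands in the cyclically-fixed part in every arity, giving a candidate morphism of multiplicative cyclic operads as soon as the cyclic operad axioms are verified.

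Next I would verify the two $\circ_i$-equivariance relations of (\ref{cyclicoperad}): for $x\in\End_A(m)$, $y\in\End_A(n)$,
\[
\tau_{m+n-1}(x\circ_i y)=
\begin{cases}\tau_n(y)\circ_n\tau_m(x)&i=1,\\ \tau_m(x)\circ_{i-1}y&i>1.\end{cases}
\]
The case $i>1$ is nearly formal: after applying $\Phi_{m+n-1}$ both sides unfold to the same pairing $\langle(x\circ_i y)(a_1,\dots,a_{m+n-1}),a_0\rangle$ with the arguments rotated by one, and the cyclic shift never crosses the $y$-block, so the identity reduces to the definition of $\Phi$. The case $i=1$ is the main computational step: via $\Phi_{m+n-1}$ the left-hand side equals
\[
\langle x(y(a_{m+n-1},a_0,\dots,a_{n-2}),a_{n-1},\dots,a_{m+n-2}),a_{-\text{shifted}}\rangle,
\]
and the key move is to use the Frobenius identity $\langle\mu(-,-),-\rangle=\langle-,\mu(-,-)\rangle$ implicit in the adjunction encoded by $\Phi$ in order to ``transfer'' the output of $x$ into the inner pairing; after this transfer a single cyclic rotation converts the expression into $\Phi_{m+n-1}(\tau_n(y)\circ_n\tau_m(x))$ applied to $(a_0,\dots,a_{m+n-1})$. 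This is where symmetry of the pairing is used essentially, because at $i=1$ one must also swap the two sides of $\langle-,-\rangle$ once.

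Once the $\circ_i$-equivariance is checked, the remaining axioms (compatibility of $\tau_n$ with the symmetric group action on $\End_A(n)$ under the inclusion $C_{[n]}\hookrightarrow\Sg_{n+1}$, and $\tau_n^{n+1}=\mathrm{id}$) are automatic from the corresponding facts on $\uHom(A^{\otimes(n+1)},I)$ transported via the $C_{[n]}$-equivariant iso $\Phi_n$. Together with the fixed-point observation above, this exhibits $\End_A$ as a multiplicative cyclic operad. The main obstacle, as indicated, is the $i=1$ case of the equivariance identity: it requires combining associativity of $\mu$, the defining Frobenius adjunction, and the symmetry of the pairing in a single bookkeeping of indices, and this is the only place where the \emph{symmetric} hypothesis is genuinely used (a non-symmetric Frobenius monoid would only give a non-cyclic, or at best partially cyclic, endomorphism structure).
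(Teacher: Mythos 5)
Your proposal is correct and follows essentially the same route as the paper: identify $\End_A(n)\cong\uHom(A^{\otimes n+1},I)$ via the Frobenius duality, pull back the cyclic permutation action, and use symmetry of the pairing to make the structure map $\Ass\to\End_A$ land in the cyclic fixed points. The only substantive difference is that you spell out the $\circ_i$-equivariance checks which the paper leaves implicit, and in doing so you locate the role of symmetry more precisely than the paper's wording does -- the paper asserts the cyclic operad structure for an arbitrary Frobenius monoid and invokes symmetry only for the invariance of the $\mu_n$, whereas your computation of the $i=1$ case and of $\tau_1(\mathrm{id}_A)=\mathrm{id}_A$ correctly shows that symmetry is already needed for the cyclic operad axioms themselves (for a non-symmetric Frobenius monoid the transported action fails these axioms, the failure being measured by the Nakayama automorphism).
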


\begin{proof}For a Frobenius monoid $(A,\mu,\eta,\phi)$, $\phi$ induces isomorphisms$$\End_A(n)=\uHom(A^{\otimes n},A)\cong\uHom(A^{\otimes n},\uHom(A,I))\cong\uHom(A^{\otimes n+1},I),\,n\geq 0.$$The cyclic $\ZZ/(n+1)\ZZ$-action on the right endows $\End_A$ with the structure of a cyclic operad. If $A$ is symmetric, its multiplication $\mu_2:I\to\End_A(2)$ is $\ZZ/3\ZZ$-invariant in virtue of (\ref{symmFrob}ii), and similarly $\mu_n:I\to\End_A(n)$ is $\ZZ/(n+1)\ZZ$-invariant, so that $\End_A$ is a multiplicative cyclic operad in the sense of (\ref{cyclicoperad}).\end{proof}

\begin{rmk}It is essential for the validity of Lemma \ref{endo} that a planar, non-symmetric version of cyclic operads is used. Indeed, the multiplicativity of a cyclic operad in Getzler-Kapranov's symmetric sense \cite{GK} would require $\Sg_3$-invariance in (\ref{symmFrob}ii). This extended invariance property holds for \emph{commutative} Frobenius monoids, but in general not for \emph{symmetric} Frobenius monoids.\end{rmk}

The following statement is the cyclic Deligne conjecture for Hochschild cochains; for alternative proofs see \cite{Ka2,KS2,TZ,V2}.

\begin{thm}The normalized Hochschild cochain complex of a symmetric Frobenius algebra carries a canonical action by a framed $E_2$-chain operad.\end{thm}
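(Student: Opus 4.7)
The plan is to follow verbatim the four-step scheme that proved Theorem \ref{Deligne}, substituting each ingredient by its cyclic counterpart. All four cyclic ingredients are already in place.

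First I would invoke Lemma \ref{endo}: for a symmetric Frobenius algebra $A$ in $\Mod_\ZZ$, the endomorphism operad $\End_A$ is a multiplicative cyclic operad in chain complexes. By Proposition \ref{L2cyc}, this structure is exactly that of an $\LL_2^{cyc}$-algebra in $\Ch(\ZZ)$. Since the underlying category of $\LL_2^{cyc}$ is the cyclic category $\Delta C$, the sequence $(\End_A(n))_{n\geq 0}$ acquires in particular a canonical cocyclic structure, whose underlying cosimplicial structure is the one used in Theorem \ref{Deligne}.

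Second I would identify the Hochschild cochain complex as a $\delta_\ZZ^{cyc}$-totalization. By \cite{GV} and (\ref{conorm}), conormalization of the cosimplicial abelian group underlying $\End_A$ is the normalized Hochschild cochain complex $\HCC^*(A;A) = \uHom_\Delta(\delta_\ZZ, \End_A)$. Since $\delta_\ZZ^{cyc} = i_!\delta_\ZZ$ by Lemma \ref{leftKan}, adjunction gives
$$\HCC^*(A;A) = \uHom_\Delta(\delta_\ZZ, i^*\End_A) = \uHom_{\Delta C}(i_!\delta_\ZZ, \End_A) = \uHom_{\Delta C}(\delta_\ZZ^{cyc}, \End_A),$$
so $\HCC^*(A;A)$ is precisely the $\delta_\ZZ^{cyc}$-totalization of the cocyclic chain complex $\End_A$.

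Third I would apply the condensation/totalization machinery of (\ref{condensation}): $\delta_\ZZ^{cyc}$-totalization is a functor $\Alg_{\LL_2^{cyc}} \to \Alg_{\Coend_{\LL_2^{cyc}}(\delta_\ZZ^{cyc})}$, so it transports the $\LL_2^{cyc}$-algebra structure on $\End_A$ to a canonical action of the condensed operad $\Coend_{\LL_2^{cyc}}(\delta_\ZZ^{cyc})$ on $\HCC^*(A;A)$. Finally, Theorem \ref{cyclcond} asserts that $\Coend_{\LL_2^{cyc}}(\delta_\ZZ^{cyc})$ is a framed $E_2$-chain operad, which completes the proof.

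There is essentially no obstacle beyond checking that the pieces fit together: every hard step (that the cyclic endomorphism operad is an $\LL_2^{cyc}$-algebra, that the condensation of $\LL_2^{cyc}$ is framed $E_2$, and that $\delta_\ZZ$-totalization is conormalization) has been done upstream. The only mild subtlety is the compatibility between $\delta_\ZZ$-totalization (giving Hochschild cochains) and $\delta_\ZZ^{cyc}$-totalization (giving the framed $E_2$-action), which is handled by the adjunction $i_! \dashv i^*$ via Lemma \ref{leftKan}.
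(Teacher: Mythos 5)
Your proposal is correct and is essentially the paper's own proof: the same chain of Lemma \ref{endo} and Proposition \ref{L2cyc} making $\End_A$ an $\LL_2^{cyc}$-algebra, the same identification $\uHom_{\Delta C}(\delta_\ZZ^{cyc},\End_A)=\uHom_{\Delta}(\delta_\ZZ,i^*\End_A)=\HCC^*(A;A)$ via Lemma \ref{leftKan}, the adjunction $i_!\dashv i^*$ and (\ref{conorm}), followed by condensation (\ref{condensation}) and Theorem \ref{cyclcond}. There are no gaps; the paper's argument proceeds in exactly this order.
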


\begin{proof}The endomorphism operad $\End_A$ of a symmetric Frobenius algebra $A$ is a multiplicative cyclic operad in abelian groups by Lemma \ref{endo}, and is therefore an $\LL_2^{cyc}$-algebra in abelian groups by Proposition \ref{L2cyc}. The unary part of $\LL_2^{cyc}$ endows $\End_A$ with the structure of a cocyclic abelian group. It follows from \cite{GV} that conormalization of the underlying cosimplicial abelian group yields the \emph{normalized Hochschild cochain complex} $\HCC^*(A;A)$ of $A$. By Lemma \ref{leftKan} and (\ref{conorm}) we get$$\uHom_{\Delta C}(\delta_\ZZ^{cyc},\End_A)=\uHom_\Delta(i_!\delta_\ZZ,\End_A)=\uHom_{\Delta}(\delta_\ZZ,i^*\End_A)=\HCC^*(A;A).$$Theorem \ref{cyclcond} induces then through condensation a canonical action on $\HCC^*(A;A)$ by the framed $E_2$-chain operad $\Coend_{\LL_2^{cyc}}(\delta_\ZZ^{cyc})$.\end{proof}

\begin{rmk}\label{CS}The cyclic Deligne conjecture interferes with \emph{string topology} by means of a quasi-isomorphism $N_*(|X|^{|S^1|})\simeq \HCC^*(N^*(X);N_{*}(X))$, cf. \cite[Theorem 3.4.3]{J}, and Poincar\'e duality. Indeed, if $X$ is the singular complex of a $d$-dimensional closed oriented manifold $M$, Poincar\'e duality $N^*(X)\simeq N_{d-*}(X)$ induces a quasi-isomorphism $N_{*+d}(|X|^{|S^1|})\simeq\HCC^*(N^*(X);N^*(X))$. It is conceivable that the Batalin-Vilkovisky structure on Chas-Sullivan's \emph{loop homology} $H_{*+d}(M^{|S^1|})$ stems from a framed $E_2$-action on the Hochschild cochain complex of an appropriate cochain model for $M$.\end{rmk}

\end{document}